\theoremstyle{plain}
\newtheorem{theorem}{Theorem}[section]
\newtheorem{prop}[theorem]{Proposition}
\newtheorem{lemma}[theorem]{Lemma}
\newtheorem{coro}[theorem]{Corollary}
\newtheorem{fact}[theorem]{Fact}
\theoremstyle{definition}
\newtheorem{example}[theorem]{Example}
\newtheorem{remark}[theorem]{Remark}
\newcommand{\dd}{\,\mathrm{d}}
\newcommand{\ii}{\pts\mathrm{i}\pts}
\newcommand{\ee}{\mathrm{e}}
\newcommand{\pts}{\hspace{0.5pt}}
\newcommand{\nts}{\hspace{-0.5pt}}
\newcommand{\fm}{\mathfrak{m}\pts}
\newcommand{\one}{\mathbbm{1}}
\newcommand{\ZZ}{\mathbb{Z}\pts}
\newcommand{\QQ}{\mathbb{Q}}
\newcommand{\RR}{\mathbb{R}}
\newcommand{\NN}{\mathbb{N}}
\newcommand{\CC}{\mathbb{C}}
\newcommand{\TT}{\mathbb{T}}
\newcommand{\MM}{\mathbb{M}}
\newcommand{\XX}{\mathbb{X}}
\newcommand{\YY}{\mathbb{Y}}
\newcommand{\cA}{\mathcal{A}}
\newcommand{\cE}{\mathcal{E}}
\newcommand{\cT}{\mathcal{T}}
\newcommand{\ct}{\mathfrak{t}}
\newcommand{\vG}{\varGamma}
\newcommand{\vL}{\varLambda}
\newcommand{\vU}{\varUpsilon}
\newcommand{\pa}{\phantom{a}}
\newcommand{\exend}{\hfill $\Diamond$}
\DeclareMathOperator{\dens}{dens}
\DeclareMathOperator{\diag}{diag}
\DeclareMathOperator{\card}{card}
\DeclareMathOperator{\vol}{vol}
\DeclareMathOperator{\Mat}{Mat}
\DeclareMathOperator*{\Conv}{\text{\Huge \raisebox{-3.5pt}{$\ast$}}}
\newcommand{\myfrac}[2]{\frac{\raisebox{-2pt}{$#1$}}
      {\raisebox{0.5pt}{$#2$}}}
\begin{document}

\title[Renormalisation for block substitutions]
{Renormalisation of pair correlations and their\\[2mm]
 Fourier transforms for primitive block substitutions}

\author{Michael Baake}
\address{Fakult\"at f\"ur Mathematik, Universit\"at Bielefeld, \newline
\hspace*{\parindent}Postfach 100131, 33501 Bielefeld, Germany}
\email{mbaake@math.uni-bielefeld.de }

\author{Uwe Grimm}
\address{School of Mathematics and Statistics,
  The Open University,\newline \hspace*{\parindent}Walton Hall, 
  Milton Keynes MK7 6AA, United Kingdom} 
\email{uwe.grimm@open.ac.uk}

\begin{abstract}  
  For point sets and tilings that can be constructed with the
  projection method, one has a good understanding of the correlation
  structure, and also of the corresponding spectra, both in the
  dynamical and in the diffraction sense. For systems defined by
  substitution or inflation rules, the situation is less favourable,
  in particular beyond the much-studied class of Pisot
  substitutions. In this contribution, the geometric inflation rule is
  employed to access the pair correlation measures of self-similar and
  self-affine inflation tilings and their Fourier transforms by means
  of exact renormalisation relations.  In particular, we look into
  sufficient criteria for the absence of absolutely continuous
  spectral contributions, and illustrate this with examples from the
  class of block substitutions. We also discuss the Frank--Robinson
  tiling, as a planar example with infinite local complexity and
  singular continuous spectrum.
\end{abstract}

\maketitle

\section{Introduction}

The theory of model sets via the projection method, see
\cite{TAO,Nicu} and references therein for background, has led to a
reasonably good understanding of mathematical models for perfect
quasicrystals. This is particularly true of systems with pure point
spectrum, and applies to spectra both in the diffraction and in the
dynamical sense; see \cite{TAO,KLS,LMS,BLvE,BL} and references therein
for more, in particular on equivalence results for the different types
of spectra.

Another intensely-studied approach starts from a substitution on a
finite alphabet, or considers an inflation rule for a finite set of
prototiles; see \cite{TAO,Nat-review,BorisNew} and <
references therein for more. If the inflation multiplier happens to be
a Pisot--Vijayaraghavan (PV) number, one meets an interesting overlap
with the projection method via systems that can both be described by
inflation and as a regular model set; see \cite[Ch.~7]{TAO} for some
classic examples. However, the still open Pisot substitution
conjecture, compare \cite{Aki,Bernd}, shows that important parts of
the picture are still missing.

Considerably less is known for more general substitution or inflation
schemes, be it beyond the PV case, in higher dimensions, or both. In
particular, the study of non-PV substitutions is only at its
beginning.  Some recent progress \cite{BFGR,BGM} in one dimension was
possible by realising that such systems admit an exact renormalisation
approach to their pair correlation measures; see \cite{BuSol,BuSol2}
for related results on the spectral measures for these systems.

The purpose of this contribution is to show how to extend such an
exact renormalisation approach to higher dimensions, and also beyond
the case of inflation tilings of finite local complexity (FLC).  To be
able to discuss some interesting classes of examples, we will build on
several results from \cite{BGM}.  One of our goals is to formulate an
effective sufficient criterion for the absence of absolutely
continuous (\textsf{ac}) diffraction, which then implies that the
diffraction measure is a singular measure, with the analogous result
on the spectral measure of maximal type where possible at present.
This clearly is expected to be the typical situation for inflation
systems with vanishing topological entropy, but no general
classification is known so far.

To formulate a criterion for the absence of \textsf{ac} components, it
will be instrumental to identify a natural cocycle attached to the
inflation rule together with an appropriate Lyapunov
exponent. Implicitly, this amounts to an asymptotic analysis of
infinite matrix products of Riesz product type. They have shown up in
various ways in the spectral theory of inflation systems
\cite{Q,BuSol,BFGR,BaGriMa,BuSol2}. It should not be surprising to
meet them again, in a slightly different fashion. In fact, they
provide perhaps the most natural point of entry for a renormalisation
type analysis of inflation systems.

This contribution is both a summary of known results, including those
from \cite{BG15,Neil,BGM,BCM}, and their extension to some new
territory, in particular in higher dimensions (as announced in
\cite{Neil-MFO} and discussed in \cite{BGM}). For the latter purpose,
we proceed in an example-oriented manner via the class of block
substitutions (not necessarily of constant size), which is still
sufficiently simple to see the underlying ideas, yet rich enough to
illustrate some new phenomena. In particular, in view of recent
general interest \cite{FS1,FS2,Nat-ILC,Dirk,LS,BorisNew}, we include
some examples of infinite local complexity as well.  Various general
results that we employ are discussed and proved in \cite{BGM}, for
which we only give a brief account here.\smallskip

The material presented below is organised as follows. In
Section~\ref{sec:prelim}, we set the scene by recalling some basic
material, including some proofs for convenience, in particular where
we are not aware of a good reference. Section~\ref{sec:sample}
continues this account, covering some important aspects of uniform
distribution and averages, which will be instrumental in most of our
later calculations.  Then, in Section~\ref{sec:1d}, we discuss
inflation systems in one dimension, from the viewpoint of exact
renormalisation of the pair correlation measures and their Fourier
transforms, with one concrete example of recent interest being
discussed in Section~\ref{sec:ex-no-ac}.  For further fully worked-out
examples, we refer to \cite{Neil,BG15,BFGR,BaGriMa}.

Starting with Section~\ref{sec:general}, we develop the entire theory
for higher-dimensional inflation tilings with finitely many prototiles
up to translations, which is then applied to various examples. In
particular, we treat binary block substitutions of constant size
(Section~\ref{sec:constant}) and a rather versatile family of block
substitutions with squares (Section~\ref{sec:block}), which comprises
tilings with infinite local complexity. This is also a feature of the
Frank--Robinson tiling (Section~\ref{sec:FR}), which is shown to have
singular continuous diffraction beyond the trivial Bragg peak at the
origin. Some concluding remarks and open problems follow in
Section~\ref{sec:outlook}.

\section{Preliminaries}\label{sec:prelim}

Our general references for concepts, notation and background are
\cite{TAO,TAO2}.  Here, we collect further methods and results, where
we begin with a simple property of Hermitian matrices.

\begin{fact}\label{fact:decompose}
  Let\/ $H = (h_{ij})^{\pa}_{1\leqslant i,j \leqslant d}\in \Mat (d,\CC)$
  be Hermitian and positive semi-definite, with rank\/ $m$.  Then, all
  diagonal elements of\/ $H$ are non-negative. If\/ $h_{ii} =0$ for
  some\/ $i$, one has\/ $h_{ij} = h_{ji} = 0$ for all\/
  $1 \leqslant j \leqslant d$.  In particular, $H=0$ iff\/ $m=0$.
  
  Whenever\/ $H\ne 0$, there are\/ $m$ Hermitian, positive
  semi-definite matrices\/ $H_1, \ldots , H_m$ of rank\/ $1$ such
  that\/ $H = \sum_{r=1}^{m} H_r$ together with\/ $H_r \pts H_s = 0$
  for\/ $r\ne s$.
\end{fact}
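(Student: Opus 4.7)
The plan is to split Fact~\ref{fact:decompose} into two packages. The first package (diagonal non-negativity, the vanishing implication, and the triviality $H = 0 \iff m = 0$) follows directly from the definition of positive semi-definiteness, while the second package (the orthogonal rank-one decomposition) is essentially the spectral theorem, with the relations $H_r H_s = 0$ falling out for free from orthonormality of eigenvectors.

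For the elementary part, I would first apply the PSD inequality to the standard basis vector $e_i$ to obtain $h_{ii} = e_i^{\pts *} H e_i \geqslant 0$. For the vanishing implication, I would factor $H = B^{*} B$ (for instance via a positive square-root factorisation), so that $v^{*} H v = \lVert B v \rVert^2$ for every $v$; the hypothesis $h_{ii} = 0$ then forces $B e_i = 0$, hence $H e_i = B^{*} B e_i = 0$, which is exactly the statement that the $i$-th column of $H$ vanishes, and Hermiticity supplies the corresponding row. The equivalence $H = 0 \iff m = 0$ is immediate from the definition of rank.

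For the main part, I would apply the spectral theorem to diagonalise $H = U D U^{*}$ with $U$ unitary and $D = \diag(\lambda_1, \ldots, \lambda_d)$ real and non-negative; by the rank assumption, exactly $m$ of the $\lambda_j$ are strictly positive. Relabelling so that $\lambda_1, \ldots, \lambda_m > 0$ and writing $u_1, \ldots, u_m$ for the associated orthonormal columns of $U$, I would set $H_r := \lambda_r u_r u_r^{*}$. Each $H_r$ is visibly Hermitian, positive semi-definite and of rank one, the spectral expansion $H = \sum_{r=1}^{m} \lambda_r u_r u_r^{*}$ supplies the sum identity (zero eigenvalues contributing trivially), and the relation $u_r^{*} u_s = \delta_{rs}$ yields $H_r H_s = \lambda_r \lambda_s u_r (u_r^{*} u_s) u_s^{*} = 0$ whenever $r \neq s$.

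No substantial obstacle is expected: every ingredient is a standard linear-algebra result. The only mild point of care is to index the $H_r$ via the strictly positive eigenvalues, so that each summand has rank exactly one; a naive use of all $d$ eigenvalues would still give a valid decomposition into mutually annihilating PSD matrices, but not into precisely $m$ rank-one pieces as claimed.
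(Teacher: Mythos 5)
Your proof is correct, and for the main claim (the rank-one decomposition with $H_r H_s = 0$ for $r \neq s$) it is the same argument as the paper's: spectral theorem, orthonormal eigenvectors, summands $\lambda_r u_r u_r^{*}$ indexed by the $m$ strictly positive eigenvalues, with mutual annihilation coming from orthonormality. The only place you diverge is the step showing that $h_{ii}=0$ kills the $i$-th row and column: you factor $H = B^{*}B$ so that $h_{ii} = \lVert B e_i \rVert^2 = 0$ forces $H e_i = 0$, whereas the paper stays with principal minors and uses the $2\times 2$ inequality $0 = h_{ii}\pts h_{jj} \geqslant \lvert h_{ij}\rvert^2$. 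Both are standard; your route needs the existence of a square root (or any $B^{*}B$ factorisation), while the paper's needs only Sylvester-type non-negativity of $2\times 2$ principal minors, so it is marginally more self-contained, but nothing of substance hinges on the choice. Your closing remark about indexing the $H_r$ by the positive eigenvalues, so that each summand genuinely has rank one and there are exactly $m$ of them, is exactly the point the paper handles by ordering the eigenvalues decreasingly and truncating at $m$.
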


\begin{proof}
  By Sylvester's criterion, $H$ positive semi-definite means that all
  principal minors are non-negative, hence in particular all diagonal
  elements of $H$. Assume $h_{ii}=0$ for some $i$, and select any
  $j\in \{ 1, \ldots, d\}$.  By semi-definiteness in conjunction with
  Hermiticity, one finds
\[
     0 \, = \, h_{ii} \pts h_{jj} \, \geqslant \, h_{ij} \pts h_{ji}
     \, = \, \lvert h_{ij} \rvert^2  \, \geqslant \, 0 \pts ,
\]  
 which implies the second claim. The equivalence of $H=0$ with
 $m=0$ is clear.
  
 Employing Dirac's notation, the spectral theorem for Hermitian
 matrices asserts that one has
 $H = \sum_{i=1}^{d} | v_i \rangle \pts \lambda_i \pts \langle v_i |$,
 where the eigenvectors $|v_i \rangle$ can be chosen to form an
 orthonormal basis (so $\langle v_i | v_j \rangle = \delta_{i,j}$ and
 $|v_i\rangle \langle v_i |$ is a projector of rank $1$), while all
 eigenvalues are non-negative due to positive semi-definiteness. The
 rank of $H$ is the number of positive eigenvalues, counted with
 multiplicities. Ordering the eigenvalues decreasingly as
 $\lambda^{\pa}_{1} \geqslant \lambda^{\pa}_{2} \geqslant \cdots
 \geqslant \lambda^{\pa}_{d} \geqslant 0$,
 one can choose $H_r = | v_r \rangle \pts \lambda_r \pts \langle v_r|$
 for $1 \leqslant r \leqslant m$, and the claim is obvious.
\end{proof}

\subsection{Logarithmic integrals and Mahler measures}

The logarithmic Mahler measure of a polynomial $p \in \CC [x]$
is defined as
\begin{equation}\label{eq:def-Mahler}
    \fm (p) \, := \int_{0}^{1} \log \big|
    p \bigl( \ee^{2 \pi \ii t} \bigr) \big| \dd t \pts .
\end{equation}
It was originally introduced by Mahler as a measure of the complexity
of $p$; compare \cite{EvWa}.  If $p(x) = a \prod_{i=1}^{s} (x -
\alpha_i)$, it follows from Jensen's formula \cite[Prop.~16.1]{Klaus}
that
\begin{equation}\label{eq:Jensen}
    \fm (p) \, = \, \log\pts \lvert a \rvert \, +
    \sum_{i=1}^{s} \log \bigl( \max \{ 1, 
    \lvert \alpha_i \rvert \} \bigr) .
\end{equation}
This has the following immediate consequence.

\begin{fact}\label{fact:cyclo}
  If\/ $p$ is a monic polynomial that has no roots outside the unit
  disk, one has\/ $\fm (p) = 0$.  In particular, this holds if\/ $p$
  is a cyclotomic polynomial,\footnote{A non-constant polynomial $p
    \in \ZZ [x]$ is called \emph{cyclotomic} if $p (x)$ divides $x^n
    \! - \nts 1$ for some $n\in\NN$.} or a product of such a
  polynomial with a monomial.  \qed
\end{fact}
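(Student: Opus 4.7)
The plan is to derive both assertions directly from Jensen's identity \eqref{eq:Jensen}, which expresses $\fm(p)$ as the logarithm of the leading coefficient plus a sum over roots outside the closed unit disk. The whole content of the fact is that both contributions vanish under the stated hypotheses, so there is essentially no obstacle, only bookkeeping.

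First, I would treat the main claim. Write $p(x) = a \prod_{i=1}^{s}(x-\alpha_i)$ with $a\in\CC$ and roots $\alpha_i\in\CC$. The hypothesis ``monic'' gives $a=1$, so $\log\lvert a\rvert = 0$. The hypothesis ``no roots outside the unit disk'' gives $\lvert \alpha_i\rvert \leqslant 1$ for each $i$, hence $\max\{1,\lvert\alpha_i\rvert\} = 1$ and thus $\log\bigl(\max\{1,\lvert\alpha_i\rvert\}\bigr) = 0$ for every $i$. Substituting into \eqref{eq:Jensen} yields $\fm(p)=0$.

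For the ``in particular'' statement, I would check that a cyclotomic polynomial in the footnote's sense, together with a monomial factor, satisfies both hypotheses. If $p\in\ZZ[x]$ divides $x^n-1$ in $\ZZ[x]$, then $p$ is monic (since $x^n-1$ is monic and any factorisation in $\ZZ[x]$ forces the leading coefficients of the factors to be $\pm 1$, so one may normalise without changing $\lvert p(\ee^{2\pi\ii t})\rvert$), and every root of $p$ is an $n$-th root of unity, hence lies on the unit circle. A monomial $x^k$ is monic with its only root at $0$, which lies in the closed unit disk. A product of such a cyclotomic polynomial with a monomial is therefore again monic with all roots in the closed unit disk, and the first part of the fact applies.

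The only point that warrants a brief remark is the monicity of a cyclotomic polynomial in the generalised sense of the footnote; this follows from the Gauss-type argument above and is the one place where a reader might pause, but it is routine.
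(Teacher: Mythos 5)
Your argument is correct and is exactly what the paper intends: Fact~\ref{fact:cyclo} is stated as an immediate consequence of Jensen's formula \eqref{eq:Jensen}, with the monic hypothesis killing the $\log\lvert a\rvert$ term and the root condition killing each $\log\bigl(\max\{1,\lvert\alpha_i\rvert\}\bigr)$ term. Your extra remark on the monicity (up to sign) of divisors of $x^n-1$ in $\ZZ[x]$ is a sensible bit of bookkeeping that the paper leaves implicit.
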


Clearly, for polynomials $p$ and $q$, one has
$\fm (p \pts q) = \fm (p) + \fm (q)$. If $p \in \ZZ [x]$, one can say
more about the possible values of $\fm (p)$. They are of interest both
in number theory and in dynamical systems; see \cite{EvWa,BCM} and
references therein.

Mahler measures of multivariate (or multi-variable) polynomials are
defined by an integration over the corresponding torus. Concretely,
for $p\in\CC \bigl[ x^{\pa}_{1}, \ldots , x^{\pa}_{d} \bigr]$, one has
\begin{equation}\label{eq:def-gen-Mahler}
   \fm (p) \, := \int_{\TT^d}
   \log \big| p \bigl(\ee^{2 \pi \ii t_1}, \ldots , 
   \ee^{2 \pi \ii t_d} \bigr) \big| \dd t^{\pa}_1  
   \cdots \dd t^{\pa}_d \pts ,
\end{equation}
where $\TT^d = \RR^d/\ZZ^d$ denotes the $d$-torus.  Unfortunately, in
contrast to the one-dimensional situation, there is no simple general
way to calculate such integrals.  If we need to single out a variable,
we do so by a subscript. For instance, $\fm_x (1 + x + xy)$ denotes
the logarithmic Mahler measure of $1+x+xy$, viewed as a polynomial in
$x$, with $y$ being a coefficient. We refer to \cite{EvWa} for general
background and examples.

\subsection{Radon measures}

Let $\mu$ denote a (generally complex) Radon measure on $\RR^d$, which
we primarily view as a linear functional over the space
$C_{\mathsf{c}} (\RR^d)$ of compactly supported continuous functions.
The `flipped-over' version $\widetilde{\mu}$ is defined by
$\widetilde{\mu} (g) = \overline{\mu (\widetilde{g}\pts )}$, where
$\widetilde{g} (x) := \overline{g (-x)}$. A measure $\mu$ is called
\emph{positive} when $\mu (g) \geqslant 0$ for all $g\geqslant 0$, and
\emph{positive definite} when
$\mu (g * \widetilde{g} \pts ) \geqslant 0$ for all
$g\in C_{\mathsf{c}} (\RR^d)$.  Here, $g*h$ refers to the convolution
of two integrable functions, as defined by
$\bigl( g * h \bigr) (x) = \int_{\RR^d} g(x-y) \pts h (y) \dd y$. By
$\lvert \mu \rvert$, we denote the \emph{total variation measure} of
$\mu$.  If $\lvert \mu \rvert (\RR^d) < \infty$, the measure is
\emph{bounded} or \emph{finite}, while we call it
\emph{translation-bounded} when
$\sup_{t\in\RR^d} \lvert \mu \rvert (t + K) < \infty$ holds for some
compact set $K\subset \RR^d$ with non-empty interior.

If $f \! : \, \RR^d \xrightarrow{\quad} \RR^d$ is an invertible
mapping, we define the \emph{pushforward} $f \! . \mu$ of a measure
$\mu$ by $\bigl(f \! . \mu\bigr) (g) = \mu ( g \nts\circ\! f)$, where
$g$ is an arbitrary test function. Viewing $\mu$ as a regular Borel
measure via the general Riesz--Markov representation theorem, compare
\cite{Simon}, the matching relation for a bounded Borel set $\cE$ is
\[
    \bigl(f \! . \mu\bigr) (\cE) \, = \, 
    \bigl( f \! . \mu\bigr) (1^{\pa}_{\cE} ) \, = \,
    \mu ( 1^{\pa}_{\cE} \circ f ) \, = \, 
    \mu \bigl(1^{\pa}_{f^{-1} (\cE)} \bigr)
    \, = \, \mu \bigl(f^{-1} (\cE) \bigr) .
\]
Of particular importance is the \emph{Dirac measure} at $x$, denoted
by $\delta_x$, which is defined by $\delta_x (g) = g (x)$ for test
functions.  For Borel sets, the matching relation is
\[
     \delta^{\pa}_{x}  (\cE) \, = \, \begin{cases}
        1 , & \text{if $x\in\cE$} , \\ 0 , & \text{otherwise} ,
        \end{cases}
\]
which is often used in the form $\delta^{\pa}_x (\cE) = 
\delta^{\pa}_{x} \bigl( 1^{\pa}_{\cE} \bigr)$.
For a point set $S\subset \RR^d$, which is at most countable in our
setting \cite{TAO}, one defines the corresponding \emph{Dirac comb} as
$\delta^{\pa}_{S} = \sum_{x\in S} \delta^{\pa}_{x}$.

When $\nu$ is absolutely continuous relative to $\mu$, denoted by
$\nu \ll \mu$, with Radon--Nikodym density $h$, we write
$\nu = h \pts \mu$, so that
$\bigl( h \pts \mu \bigr) (g) = \mu (h \pts g)$. For the pushforward,
this leads to
\begin{equation}\label{eq:forward-1}
  f \! . (h \pts \mu) \, = \,  \bigl( h \circ \nts f^{-1} \bigr) 
  \cdot (f\! . \mu) \pts ,
\end{equation}
as follows from a simple calculation with a test function.  When
$f(x) = Ax$ with $A \in \mathrm{GL} (d,\RR)$ and $\mu$ is Lebesgue 
measure, it is sometimes more convenient to rewrite this relation as
\begin{equation}\label{eq:forward-ac}
    A.h \, := \, f \!. h \, = \,
    \frac{h \circ f^{-1}}{\lvert \det (A) \rvert}  \, = \,
    \frac{h \circ A^{-1}}{\lvert \det (A) \rvert} \pts ,
\end{equation}
to be understood as a relation between absolutely continuous measures.
The \emph{convolution} $\mu * \nu$ of two finite measures is defined by
\[
    \bigl( \mu * \nu \bigr) (g) \, = \int_{\RR^d} \int_{\RR^d}
    g (x+y) \dd \mu (x) \dd \nu (y) \pts ,
\]
which can be extended in various ways, in particular to the case where
one measure is finite and the other is translation bounded
\cite[Prop.~1.13]{BF}.

\begin{lemma}\label{lem:push-conv}
  If\/ $\mu$ and\/ $\nu$ are convolvable measures on\/ $\RR^d$ and
  if\/ $f \! : \, \RR^d \xrightarrow{\quad} \RR^d$ is invertible and
  linear, the pushforward operation satisfies\/ $\, f\! . (\mu * \nu)
  = (f\! . \mu) * (f\! . \pts \nu)$.
\end{lemma}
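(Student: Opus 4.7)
The natural route is to test both sides against an arbitrary $g \in C_{\mathsf{c}} (\RR^d)$ and use linearity of $f$ to split $f(x+y) = f(x) + f(y)$. Concretely, I would start from the left-hand side, unfold the pushforward, and then unfold the convolution:
\[
   \bigl( f\! . (\mu * \nu)\bigr) (g) \, = \,
   (\mu * \nu)(g \circ f) \, = \int_{\RR^d}\!\!\int_{\RR^d}
   g \bigl( f(x) + f(y)\bigr) \dd \mu (x) \dd \nu (y) \pts ,
\]
where linearity of $f$ was used in the final step. Then I would reinterpret each integral as a pushforward, via the defining identity $\mu (h \circ f) = (f\!.\mu)(h)$ applied first in the $x$-variable with $h(u) = g(u + f(y))$ and then in the $y$-variable, to obtain
\[
   \int_{\RR^d}\!\!\int_{\RR^d} g (u + v) \dd (f\!.\mu)(u) \dd (f\!.\nu)(v)
   \, = \, \bigl((f\!.\mu) * (f\!.\nu)\bigr)(g) \pts .
\]
Since this holds for every $g \in C_{\mathsf{c}}(\RR^d)$, the two measures agree.

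Two small items need justification. First, one has to verify that the double integrals make sense, so that Fubini applies and the intermediate step of splitting the pushforward into the $x$- and $y$-integrations is legal; this is exactly the hypothesis that $\mu$ and $\nu$ are convolvable, together with the observation that $g \circ f$ is again in $C_{\mathsf{c}} (\RR^d)$ because $f$ is a linear homeomorphism. Second, one should check that $f\!.\mu$ and $f\!.\nu$ are themselves convolvable, so that the right-hand side is well defined; this is immediate in the relevant cases, since an invertible linear pushforward preserves both finiteness (total mass is unchanged) and translation-boundedness (a compact set $K$ with non-empty interior is mapped to another such set $f(K)$, and $\lvert f\!.\mu\rvert (t + f(K)) = \lvert \mu \rvert (f^{-1}(t) + K)$).

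The only real point of the argument is the linearity of $f$, which is what makes the convolution and the pushforward commute. Everything else is bookkeeping, so I do not expect any genuine obstacle; the proof is essentially a one-line calculation once the test-function framework of the preceding paragraphs is in place.
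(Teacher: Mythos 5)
Your argument is correct and is essentially the paper's own proof: both test against $g\in C_{\mathsf{c}}(\RR^d)$, use linearity to write $g\bigl(f(x)+f(y)\bigr)$, apply the pushforward identity to the translated test functions $u\mapsto g\bigl(u+f(y)\bigr)$ in each variable, and invoke Fubini, which is exactly where convolvability enters. Your extra remark that $f\!.\mu$ and $f\!.\nu$ remain convolvable is a sensible addition that the paper leaves implicit.
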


\begin{proof}
  Let $g$ be a general test function and define $g^{\pa}_{a} $ by
  $g^{\pa}_{a} (x) = g (a+x)$. Then, one has
\[
\begin{split}
   \bigl( f \! . (\mu * \nu) \bigr) (g)\, & = \, \int_{\RR^d}
   \int_{\RR^d}  g \bigl( f( x+y) \bigr) \dd \mu (x) \dd \nu (y) 
   \, = \, \int_{\RR^d} \int_{\RR^d}
   g \bigl( f( x ) + f(y) \bigr) \dd \mu (x) \dd \nu (y)  \\[2mm]
   & = \int_{\RR^d} \mu \bigl( g^{\pa}_{f(y)}  \circ f  \bigr) 
   \dd \nu (y) \, = \int_{\RR^d} ( f \! . \mu ) 
   \bigl( g^{\pa}_{f(y)} \bigr) \dd \nu (y) \\[2mm]
   & = \int_{\RR^d} \int_{\RR^d} g \bigl( x + f(y) \bigr)
   \dd \bigl( f \! . \mu \bigr) (x) \dd \nu (y) \, = \int_{\RR^d}
   \int_{\RR^d} g^{\pa}_{x} \bigl( f(y) \bigr) \dd \nu (y) \dd
   \bigl( f\! . \mu \bigr) (x) \\[2mm]
   & = \int_{\RR^d} \nu (g^{\pa}_{x} \circ f ) \dd 
      \bigl( f \! . \mu\bigr) (x)
   \, = \int_{\RR^d} \bigl( f\! . \pts \nu\bigr) (g^{\pa}_{x} )
    \dd \bigl( f\! . \mu \bigr) 
   (x) \\[2mm] & = \int_{\RR^d} \int_{\RR^d} g (x+y)
   \dd \bigl( f\! . \pts \nu \bigr) (y) \dd \bigl( f\! .\mu\bigr) (x) 
   \, = \, \bigl( (f\! . \pts \nu) * (f\! . \mu) \bigr) (g) \\[3mm]
   & = \,    \bigl( (f\! . \mu) * (f\! . \pts \nu) \bigr) (g) \pts ,
\end{split}
\]    
where the second step in the third line, and the last step, rely 
on Fubini's theorem.
\end{proof}

A linear map $f$ on $\RR^d$ is \emph{expansive} if there is a constant
$\alpha > 1$ such that $\| f (x) \| \geqslant \alpha \| x \|$ for all
$x \in \RR^d$. This implies that all eigenvalues satisfy
$\lvert \lambda \rvert \geqslant \alpha$ and that $f$ is invertible.

\begin{lemma}\label{lem:bei-null}
  Let\/ $\mu$ be a Radon measure on\/ $\RR^d$ such that\/
  $\mu |^{\pa}_{U} = \mu (\{ 0 \} ) \, \delta^{\pa}_{0}$ holds for
  some open neighbourhood\/ $U$ of\/ $0$. Then, if\/ $f$ is an
  expansive linear map on\/ $\RR^d$, one has
\[   
    \lim_{n\to\infty}  f^n \! . \pts \mu  \, = \, 
      \mu (\{ 0 \}) \, \delta^{\pa}_{0} \pts .
\]   
\end{lemma}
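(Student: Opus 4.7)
The plan is to prove the convergence in the vague topology, that is, to show $(f^n\!.\pts\mu)(g) \to \mu(\{0\}) \pts g(0)$ for every $g \in C_{\mathsf{c}}(\RR^d)$. The key observation is that expansivity forces preimages of bounded sets under $f^n$ to shrink to $\{0\}$, so eventually the support of the transformed test function lands inside the neighbourhood $U$ where $\mu$ is already known explicitly.

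First, I would unfold the pushforward: by definition, $(f^n\!.\pts\mu)(g) = \mu(g \circ f^n)$. Next, since $f$ is expansive with constant $\alpha>1$, iteration gives $\| f^n(x) \| \geqslant \alpha^n \| x \|$, so for any compact set $K \subset \RR^d$ one has $(f^n)^{-1}(K) \subseteq B\bigl(0, \alpha^{-n} \sup_{y\in K} \|y\|\bigr)$, which is contained in $U$ once $n$ is large enough. In particular, fixing $g \in C_{\mathsf{c}}(\RR^d)$ with $\mathrm{supp}(g) = K$, there exists $N = N(g)$ such that $\mathrm{supp}(g \circ f^n) \subseteq (f^n)^{-1}(K) \subseteq U$ for all $n \geqslant N$.

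For such $n$, the local description $\mu |^{\pa}_{U} = \mu(\{0\}) \pts \delta^{\pa}_{0}$ applies to $g \circ f^n$, giving
\[
   \mu(g \circ f^n) \, = \, \mu(\{0\}) \, (g \circ f^n)(0)
   \, = \, \mu(\{0\}) \, g\bigl(f^n(0)\bigr) \, = \, \mu(\{0\})\, g(0) \pts ,
\]
where the last equality uses $f^n(0) = 0$, which holds because $f$ is linear. Hence $(f^n\!.\pts\mu)(g) = \mu(\{0\}) \, \delta^{\pa}_0(g)$ for all $n \geqslant N$, so the sequence is eventually constant on each test function and the claimed vague limit follows.

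There is no real obstacle here; the only point requiring mild care is extracting a quantitative rate of shrinking from expansivity, but this is immediate from $\| f^n(x) \| \geqslant \alpha^n \| x \|$ and the fact that the support of any $g \in C_{\mathsf{c}}(\RR^d)$ is bounded. The argument would extend verbatim to any affine map with an expansive linear part and the fixed point placed at the origin, and this is all that is needed in the subsequent renormalisation analysis.
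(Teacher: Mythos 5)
Your proof is correct and follows essentially the same route as the paper: expansivity makes $(f^n)^{-1}$ shrink the support of any bounded set (or test function) into $U$, where $\mu$ is explicitly the point mass at $0$, so the pushforwards are eventually constant. The only cosmetic difference is that you evaluate on test functions $g\in C_{\mathsf{c}}(\RR^d)$ directly, whereas the paper evaluates on bounded Borel sets; for the vague limit your formulation is, if anything, slightly more immediate.
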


\begin{proof}
  Let $\cE\subset \RR^d$ be a fixed, bounded Borel set.  Viewing $\mu$
  as a regular Borel measure, one has
  $ \bigl( f^n\! . \mu \bigr) (\cE) = \mu \bigl( f^{-n} (\cE)\bigr)$.
  Since $f$ is expansive, with expansion constant $\alpha > 1$, it is
  invertible, and $f^{-1}$ is contractive, with
  $\| f^{-1} (x) \| \leqslant \frac{1}{\alpha} \| x \|$ for all
  $x\in\RR^d$. Consequently, $f^{-n} (\cE) \subset U$ for $n$
  sufficiently large.
  
  Now, the set $\cE$ contains $0$ if and only if $f^{-n} (\cE)$ does,
  so $\bigl( f^n \! . \mu \bigr) (\cE) = \mu(\{ 0 \}) \, \delta^{\pa}_{0}
  (\cE)$ for $n$ large enough. Since $\cE$ was bounded but otherwise
  arbitrary, our claim follows.
\end{proof}

The Fourier transform of measures will play an important role in many
of our arguments. We follow the classical approach as outlined in
\cite[Ch.~1]{BF}, see also \cite[Ch.~8]{TAO} as well as \cite{MS},
where the Fourier transform of an integrable function $f$ is given by
\[
      \widehat{f} (k) \, = \int_{\RR^d} \ee^{- 2 \pi \ii 
      \langle k | x \rangle} f(x) \dd x
\]
as usual, where $\langle \cdot | \cdot \rangle$ denotes the standard inner
product of $\RR^d$. If $\mu$ is a finite measure, its Fourier transform 
is a continuous function, written as
\[
    \widehat{\mu} (k) \, = \int_{\RR^d} \ee^{-2 \pi \ii 
    \langle k | x \rangle}  \dd \mu (x) \pts .
\]
For translation-bounded measures, we shall also employ standard notions
and techniques from the theory of tempered distributions; compare
\cite[Sec.~6.2]{Simon}.

Below, we will make frequent use of a relation that tracks the
consequence of an invertible linear map under Fourier transform.

\begin{lemma}\label{lem:linear-FT}
   Let\/ $\mu$ be a Fourier-transformable measure on\/ $\RR^d$,
   and\/ $A\in \mathrm{GL} (d,\RR)$. Then, with\/ $A^{*} := (A^T)^{-1}$
   denoting the dual matrix, one has
\[
   \widehat{A.\mu} \, = \, \frac{A^{*} \! . \pts \widehat{\mu}}
   {\lvert \det (A) \rvert} \pts .
\]
   Moreover, when\/ $\widehat{\mu}$ is absolutely continuous
   relative to Lebesgue measure, hence represented by a locally
   integrable function, the relation simplifies to
\[
    \widehat{A.\mu} \, = \, \widehat{\mu} \pts \circ
    \nts A^T \, = \, A^T \! . \pts \widehat{\mu} \pts .
\]
\end{lemma}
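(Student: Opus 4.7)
The plan is to verify the identity in the distributional sense by pairing both sides with an arbitrary Schwartz test function $\varphi$, exploiting the standard duality $\widehat{\mu}(\varphi) = \mu(\widehat{\varphi})$ (valid because Fourier-transformable measures are tempered distributions) together with the pushforward definition $(A.\mu)(g) = \mu(g\circ A)$. Combining these two identities immediately gives $\widehat{A.\mu}(\varphi) = \mu(\widehat{\varphi}\circ A)$, so the whole content of the lemma reduces to recognising the function $\widehat{\varphi}\circ A$ as another Fourier transform.

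The key step is a single linear change of variables. Writing $\langle Ax \pts | \pts k\rangle = \langle x \pts | \pts A^T k\rangle$ and substituting $k = A^* y$ (so that $A^T k = y$), which has Jacobian $|\det A^*| = 1/|\det A|$, yields
\[
   \widehat{\varphi}(Ax) \, = \int_{\RR^d} \ee^{-2\pi\ii\langle x \pts | \pts y\rangle}
   \varphi(A^* y)\,\frac{\dd y}{|\det A|} \, = \,
   \frac{1}{|\det A|}\, \widehat{\varphi \circ A^*}(x) \pts .
\]
Inserting this back and using the pushforward definition once more, one obtains
$\widehat{A.\mu}(\varphi) = \frac{1}{|\det A|} \pts \mu(\widehat{\varphi \circ A^*}) = \frac{1}{|\det A|} \pts \widehat{\mu}(\varphi \circ A^*) = \frac{1}{|\det A|}\pts(A^*.\widehat{\mu})(\varphi)$, which is the first claim.

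For the absolutely continuous case, write $\widehat{\mu} = h\cdot\lambda$ with Lebesgue measure $\lambda$. Applying \eqref{eq:forward-ac} to the map $A^*$ shows that $A^*.\widehat{\mu}$ has Radon--Nikodym density $h\circ(A^*)^{-1}/|\det A^*| = |\det A|\cdot(h\circ A^T)$, so the factor $1/|\det A|$ cancels and $\widehat{A.\mu}$ is represented by the function $h\circ A^T = \widehat{\mu}\circ A^T$. The alternative form $A^T.\widehat{\mu}$ is then a restatement under the same pushforward convention; for finite measures it can also be checked directly from the pointwise identity $\widehat{A.\mu}(k) = \int \ee^{-2\pi\ii\langle k \pts | \pts Ax\rangle}\dd\mu(x) = \widehat{\mu}(A^T k)$.

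The main obstacle I foresee is purely notational bookkeeping: keeping $A$, $A^T$, $A^*$ and the two (measure versus density) pushforward conventions from Section~\ref{sec:prelim} consistently aligned at each step. There is no analytic difficulty, because the invertible linear action of $A$ preserves translation-boundedness and hence Fourier-transformability, which legitimises the distributional step $\widehat{\mu}(\varphi\circ A^*) = (A^*.\widehat{\mu})(\varphi)$ that drives the computation.
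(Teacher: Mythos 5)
Your argument is correct and follows essentially the same route as the paper: the duality $\widehat{A.\mu}(g) = \mu(\widehat{g}\circ A)$ plus the change of variables $k = A^{*}y$ giving $\widehat{g}\circ A = \lvert\det A\rvert^{-1}\,\widehat{g\circ A^{*}}$, and then Eq.~\eqref{eq:forward-ac} applied to $A^{*}$ for the absolutely continuous case. The only difference is cosmetic (you establish the pointwise identity for $\widehat{\varphi}\circ A$ first instead of manipulating the double integral directly, and you spell out the density computation the paper leaves implicit), so nothing further is needed.
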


\begin{proof}
If $g$ is an arbitrary test function, one has
\[
   \widehat{A.\mu} \, (g) \, = \, \bigl( A.\mu \bigr) 
   (\pts\widehat{g}\pts )
   \, = \, \mu (\pts \widehat{g} \circ \nts\nts A ) \, = 
   \int_{\RR^d} \int_{\RR^d} \ee^{- 2 \pi \ii 
   \langle x | At \rangle} g(x)
   \dd x \dd \mu (t) \pts .
\]
Observing
$\langle x | A \pts t \rangle = \langle t | A^T \nts \nts x \rangle$
and setting $x = A^* y$, hence
$\dd x = \lvert \det (A^* ) \rvert \dd y$, one finds
\[
\begin{split}
   \widehat{A.\mu} \, (g) \, & = 
   \int_{\RR^d} \int_{\RR^d} \ee^{- 2 \pi \ii \langle t | y \rangle}
   g(A^* y) \, \frac{\dd y \dd \mu (t)}
   {\lvert \det (A) \rvert} \, =
   \int_{\RR^d} \bigl(\widehat{g \circ \nts\nts A^* }\bigr) (t)
   \frac{\dd \mu (t)}{\lvert \det (A) \rvert} \\[2mm] 
   & = \, \frac{\mu \bigl( 
   \widehat{g \circ \nts\nts A^*} \bigr)}
   {\lvert \det (A) \rvert}  \,
    =\, \frac{\widehat{\mu} \bigl(g \circ \nts\nts A^* \bigr)}
   {\lvert \det (A) \rvert}  \, = \,
   \frac{\bigl(A^* \! . \pts \widehat{\mu} \bigr)
    (g)}{\lvert \det (A) \rvert} \pts ,
\end{split}
\]
which implies the first claim. The second is now a consequence
of Eq.~\eqref{eq:forward-ac}.
\end{proof}

\subsection{Riesz products}

Of particular interest in the context of singular measures are
measures that have a representation as infinite Riesz products. Let us
recall one paradigmatic example of pure point type, and then
generalise it. Here, an expression of the form
$\prod_{m\geqslant 0} f_m (k)$ with continuous functions $f_m$ is a
short-hand for the measure that is defined as the vague limit of a
sequence of absolutely continuous measures, the latter being given by
the Radon--Nikodym densities $\prod_{m=0}^{n} f_m (k)$ with
$n\geqslant 0$.

\begin{lemma}\label{lem:2-Riesz}
  As a relation between translation-bounded measures on\/ $\RR$, one
  has
\[
     \prod_{m\geqslant 0} \bigl( 1 + \cos (2 \pi \pts 2^m k) \bigr)
     \, = \, \delta^{\pa}_{\ZZ} \pts ,  
\]
  where\/ $k\in\RR$ and convergence is in the vague topology.
\end{lemma}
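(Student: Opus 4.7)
The plan is to compute the finite partial products explicitly and recognise them as Fejér-type kernels, which are a standard approximate identity for $\delta^{\pa}_{\ZZ}$.

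First I would use the half-angle identity $1+\cos(2\pi\pts 2^m k) = 2\cos^{2}(\pi\pts 2^m k)$ to rewrite the partial product through order $N$ as
\[
   P^{\pa}_{N} (k) \, = \prod_{m=0}^{N} \bigl( 1 + \cos ( 2 \pi \pts 2^m k)\bigr)
   \, = \, 2^{N+1} \prod_{m=0}^{N} \cos^{2} (\pi \pts 2^m k) \pts .
\]
Next I would invoke the classical telescoping identity (which follows by repeatedly applying $\sin (2\theta) = 2 \sin (\theta) \cos (\theta)$)
\[
   \prod_{m=0}^{N} \cos (\pi \pts 2^m k) \, = \,
   \frac{\sin ( \pi \pts 2^{N+1} k )}{2^{N+1} \sin (\pi k)}\pts ,
\]
to obtain
\[
    P^{\pa}_{N} (k) \, = \, \frac{\sin^{2} ( \pi \pts 2^{N+1} k )}
    {2^{N+1} \sin^{2} (\pi k)} \, = \, F^{\pa}_{2^{N+1}} (k) \pts ,
\]
where $F^{\pa}_{n} (k) = \sin^{2} (\pi n k) / \bigl( n \sin^{2} (\pi k) \bigr)$ is the Fejér kernel of order $n$, viewed as a $1$-periodic function on $\RR$.

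The claim then reduces to the well-known fact that $F^{\pa}_n \to \delta^{\pa}_{\ZZ}$ vaguely on $\RR$ as $n\to\infty$. I would verify this either directly (fix a test function $g\in C^{\pa}_{\mathsf{c}} (\RR)$, split its support into unit intervals around each integer it meets, and apply Fejér's summability theorem on each interval) or via Fourier transform: the sequence $\widehat{F^{\pa}_n}$ is supported on $\ZZ$ with values $(1 - \lvert j \rvert/n)^{+}$, which converge pointwise to $1$, so $F^{\pa}_n \to \delta^{\pa}_{\ZZ}$ as tempered distributions, and hence vaguely as translation-bounded measures.

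The main obstacle is purely conceptual rather than technical: one has to justify the passage from the explicit formula for $P^{\pa}_N$ to the vague limit uniformly across compacta of $\RR$, keeping in mind that the approximants $P^{\pa}_N$ are absolutely continuous measures and the limit is pure point. Once the identification with the Fejér kernel is in hand, however, the remaining work is entirely classical Fourier analysis and the result follows immediately.
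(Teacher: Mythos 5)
Your argument is correct, but it takes a genuinely different route from the paper's. You work entirely on the $k$-side: the half-angle and telescoping sine identities give the closed form $P_N(k)=\sin^2(\pi\, 2^{N+1}k)/\bigl(2^{N+1}\sin^2(\pi k)\bigr)$, i.e.\ the Fej\'{e}r kernel $F_{2^{N+1}}$ viewed as a $1$-periodic density on $\RR$, and the claim then follows from the classical approximate-identity (or Fourier-coefficient) property of these kernels; the singularities at integer $k$ are removable, and the passage from convergence against Schwartz functions to vague convergence is harmless because the $P_N$ are non-negative, $1$-periodic and of unit mass per period, hence uniformly translation bounded — your direct splitting of a compactly supported test function into unit intervals settles this as well. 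The paper instead argues on the dual side, in the spirit of Bernoulli convolutions: it writes $1+\cos(2\pi k)$ as $\widehat{\nu}$ with $\nu=\delta_0+\frac{1}{2}(\delta_1+\delta_{-1})$, computes the $n$-fold convolution of the pushforwards $f^m.\nu$ explicitly as a Dirac comb with triangular weights $(2^n-\lvert\ell\rvert)/2^n$ converging vaguely to $\delta_{\ZZ}$, and transfers this via the convolution theorem and the Poisson summation formula; those triangular weights are exactly your Fej\'{e}r coefficients $(1-\lvert\ell\rvert/2^n)^{+}$, so the two computations are Fourier duals of one another. Your route buys brevity and an explicit formula for every partial product, and the same telescoping works verbatim for the $M$-adic case of Proposition~\ref{prop:M-Riesz}; the paper's route buys a template (measure $\nu$, pushforward under the expansion, convolve, Fourier-transform) that persists for the later generalised Riesz products where no closed form is available, which is why the authors phrase the lemma this way.
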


\begin{proof}
  We employ a method that is well known from the theory of
  Bernoulli convolutions; compare \cite{PSS}.
  Define $\mu = \delta^{\pa}_{0} + \delta^{\pa}_{1}$ and consider
  $\nu = \frac{1}{2} \, \mu * \widetilde{\mu} = \delta^{\pa}_{0} +
  \frac{1}{2} (\delta^{\pa}_{1} + \delta^{\pa}_{-1})$, so
\[
     \widehat{\nu} (k) \, = \, 1 + \cos (2 \pi k) \pts .
\]   
With $f(x) = 2 x$, one has
$f \! . \pts \nu = \frac{1}{2} \, ( f \! .\mu) * ( f \! .
\widetilde{\mu} \pts )$ by Lemma~\ref{lem:push-conv},
where $ f\! . \widetilde{\mu} = \widetilde{f \! . \mu\,}\nts$.
Moreover, one has $\mu * \nts (f \! .\mu) * \ldots * \nts
(f^{n-1} \! .\mu) = \sum_{\ell=0}^{2^n - 1} \delta^{\pa}_{\ell}$.
Now, for $n\geqslant 1$, a simple convolution calculation gives
\[
    \Conv_{m=0}^{n-1} f^m \! . \pts \nu \, = \sum_{\ell=1-2^n}^{2^n -1} \!
    \myfrac{2^n - \lvert \ell \rvert}{2^n} \, \delta^{\pa}_{\ell}
    \; \xrightarrow{\, n \to \infty \,} \; \delta^{\pa}_{\ZZ} \pts ,
\]  
  with convergence in the vague topology.
  
  With $\widehat{f^m_{\vphantom{I}} \! . \pts \nu} = \widehat{\nu}
  \circ \nts f^m$, which follows from Lemma~\ref{lem:linear-FT},
  an application of the convolution theorem in conjunction with the
  continuity of the Fourier transform leads to
\begin{equation}\label{eq:Poisson}
    \widehat{\Conv_{m=0}^{n-1} f^m \! . \pts \nu }  \, = 
    \prod_{m=0}^{n-1} (\widehat{\nu} \circ \nts f^m )
     \; \xrightarrow{\, n \to \infty \,} \; 
    \widehat{\pts\delta^{\pa}_{\ZZ}\pts}
     \, = \, \delta^{\pa}_{\ZZ} \pts , 
\end{equation}
where the last step is the Poisson summation formula (PSF); compare
\cite[Prop.~9.4]{TAO}. Our claim now follows via the observation that
$\bigl( \widehat{\nu} \circ \nts f^m \bigr) (k) = 1 + \cos( 2 \pi \pts
2^m k) $.
\end{proof}  

More generally, let $2 \leqslant M \in \NN$ be fixed and consider
\[
    \mu \, = \sum_{\ell=0}^{M-1} \delta^{\pa}_{\ell}
    \quad \text{and} \quad  
    \nu \, = \, \frac{\mu * \widetilde{\mu}}{M}
    \, = \, \delta^{\pa}_{0} \, + \sum_{\ell=1}^{M-1}
    \myfrac{M - \ell}{M} \bigl( \delta^{\pa}_{\ell} + 
    \delta^{\pa}_{-\ell}\bigr) .
\]
With $f(x) = Mx$, one has
$\mu * \nts (f \! .\mu) * \ldots * \nts (f^{n-1} \! .\mu) =
\sum_{\ell=0}^{M^n - 1} \delta^{\pa}_{\ell}$ and
\[
     \Conv_{m=0}^{n-1} f^m \! . \nu \, = 
     \sum_{\ell=1-M^n}^{M^n - 1} \!
     \myfrac{M^n - \lvert \ell \rvert}{M^n} \, \delta^{\pa}_{\ell}
     \; \xrightarrow{\, n \to\infty \,} \; \delta^{\pa}_{\ZZ}
\]
in the vague topology, so that Eq.~\eqref{eq:Poisson} holds here
as well. Observe that
\[
     \widehat{\nu} (k) \, = \, 1  + 2 \sum_{\ell=1}^{M-1}
     \myfrac{M-\ell}{M} \, \cos ( 2 \pi \ell k) \pts ,
\]
which satisfies $\widehat{\nu} (k) \geqslant 0$ and
$\int_{0}^{1} \widehat{\nu} (k) \dd k = 1$. Moreover,
$\prod_{m=0}^{n-1} \bigl( \widehat{\nu} \circ \nts f^m \bigr)$ defines
a probability density on $[0,1]$ for each $n\in\NN$. Now, the
generalisation of Lemma~\ref{lem:2-Riesz} reads as follows.

\begin{prop}\label{prop:M-Riesz}
    For any\/ $2 \leqslant M \in \NN$, one has
\[
      \prod_{m\geqslant 0}  \Bigl( 1 + \pts 2 \nts \sum_{\ell=1}^{M-1}
      \myfrac{M-\ell}{M} \, \cos (2 \pi \ell M^m k) \Bigr) \, = \, 
      \delta^{\pa}_{\ZZ}  \pts ,
\]    
   where\/ $k\in\RR$ and convergence is in the vague topology. 
   \qed
\end{prop}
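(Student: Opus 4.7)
The plan is essentially to repeat the strategy of Lemma~\ref{lem:2-Riesz} verbatim, since the paragraph immediately preceding the proposition has already assembled almost all of the ingredients we need. Indeed, the measures $\mu$, $\nu$ and the map $f(x) = Mx$ have been defined, and we have been told that
\[
   \Conv_{m=0}^{n-1} f^m\!.\nu \; \xrightarrow{\, n\to\infty \,} \;
   \delta^{\pa}_{\ZZ}
\]
holds in the vague topology, together with the explicit formula for $\widehat{\nu}(k)$.

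The first step is to Fourier transform both sides of this convergence. By Lemma~\ref{lem:linear-FT}, applied in the $d=1$ setting where $A$ is multiplication by $M^m$, one has $\widehat{f^m\!.\nu} = \widehat{\nu}\circ\nts f^m$, since $\widehat{\nu}$ is a continuous function. Combined with the convolution theorem, this gives
\[
   \widehat{\Conv_{m=0}^{n-1} f^m\!.\nu} \, = \,
   \prod_{m=0}^{n-1} \bigl( \widehat{\nu} \circ\nts f^m \bigr) ,
\]
which is the exact analogue of the first equality in~\eqref{eq:Poisson}. Passing to the limit $n\to\infty$ using the continuity of Fourier transformation on the appropriate space of transformable measures, together with the Poisson summation formula $\widehat{\delta^{\pa}_{\ZZ}} = \delta^{\pa}_{\ZZ}$, yields the convergence
\[
   \prod_{m=0}^{n-1} \bigl( \widehat{\nu} \circ\nts f^m \bigr)
   \; \xrightarrow{\, n\to\infty \,} \; \delta^{\pa}_{\ZZ}
\]
vaguely. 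Finally, inserting the explicit formula
\[
   \bigl( \widehat{\nu} \circ\nts f^m \bigr) (k) \, = \,
   1 + 2 \sum_{\ell=1}^{M-1} \myfrac{M-\ell}{M} \cos(2\pi\ell M^m k)
\]
gives the stated Riesz product.

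The only delicate point is justifying the transition from vague convergence of the measures $\Conv_{m=0}^{n-1} f^m\!.\nu$ to vague convergence of their Fourier transforms. This is where the remark in the preceding paragraph becomes important: the partial products $\prod_{m=0}^{n-1}(\widehat{\nu}\circ\nts f^m)$ are non-negative (because $\widehat{\nu}\geqslant 0$) and define probability densities on $[0,1]$ for every $n$, so the sequence is translation-bounded uniformly in $n$. Combined with the fact that the measures on the spatial side are finite with uniformly bounded total mass, this places us in the standard setting where vague convergence is preserved under Fourier transform; this is the step I would spell out most carefully, while the algebraic identification of the Riesz-product factors is routine.
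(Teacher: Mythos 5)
Your argument is exactly the one the paper intends: the proposition is stated with a \qed precisely because the paragraph preceding it supplies all ingredients, and one simply repeats the proof of Lemma~\ref{lem:2-Riesz}, invoking Lemma~\ref{lem:linear-FT}, the convolution theorem, and the Poisson summation formula as in Eq.~\eqref{eq:Poisson}. One correction to your justification of the delicate step: the spatial-side measures $\Conv_{m=0}^{n-1} f^m\!.\pts\nu$ do \emph{not} have uniformly bounded total mass (the $n$-th one has mass $M^n+1$, consistent with the limit $\delta^{\pa}_{\ZZ}$ being unbounded); what is uniform in $n$ is their \emph{translation boundedness}, since each integer carries weight at most $1$ (at most $2$ at the origin). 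That uniform translation boundedness upgrades the vague convergence to convergence in the space of tempered distributions, where the Fourier transform is continuous; the transforms then converge in $\mathcal{S}'$, and since they are positive measures that are uniformly translation bounded (your observation that the partial products are $1$-periodic probability densities on $[0,1]$ is the relevant one here), this convergence is again vague, completing the argument.
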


It is clear how to extend this to more than one dimension, the details
of which are left to the interested reader.

\section{Uniform distribution and averages}\label{sec:sample}

While uniform distribution results are usually stated for one
dimension, many of them have natural, though less well-known,
generalisations to higher dimensions. We shall need some of them to
calculate limits of various Birkhoff sums in our examples.  To
formulate the results, we represent $\TT^d= \RR^d /\ZZ^d$ as the
half-open unit cube $[0,1)^d$ with \mbox{(coordinate{\pts}-wise)}
addition modulo~$1$. As before, we use $\langle x \pts | \pts y
\rangle = \sum_{i=1}^{d} x_i \pts y_i$ for the standard inner product
in $\RR^d$. Let us recall some useful properties of non-singular
linear forms.

\begin{fact}\label{fact:null-lift}
  Consider a non-singular linear form\/
  $f \! : \, \RR^d \xrightarrow{\quad} \RR$, which can thus be written
  as\/ $f (x) = \langle a \pts | \pts x \rangle$ with
  $0 \ne a \in \RR^d$. Then, if\/ $\cE $ is a Lebesgue null set in\/
  $\RR$, its preimage\/ $f^{-1} (\cE)$ is a Lebesgue null set in\/
  $\RR^d$.
\end{fact}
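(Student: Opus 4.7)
The plan is to reduce to a trivial coordinate situation by an orthogonal change of variables, and then invoke Tonelli's theorem. Concretely, since $a\ne 0$, I would pick an orthogonal matrix $O \in \mathrm{O}(d)$ with $O a = \lVert a \rVert \pts e^{\pa}_1$, where $e^{\pa}_1$ is the first standard basis vector. Setting $y = Ox$, one has
\[
   f(x) \, = \, \langle a \pts | \pts x \rangle \, = \,
   \langle Oa \pts | \pts Ox \rangle \, = \,
   \lVert a \rVert \pts y^{\pa}_{1} \pts ,
\]
so that, after the volume-preserving change of variables, the preimage becomes
\[
   f^{-1}(\cE) \; = \; O^T \bigl( (\lVert a \rVert^{-1} \cE)
   \times \RR^{d-1} \bigr) .
\]
Since $O^T$ preserves Lebesgue measure, it suffices to check that $N \times \RR^{d-1}$ is a Lebesgue null set in $\RR^d$ whenever $N \subset \RR$ is a null set.

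For the latter, I would first assume $N$ is a Borel null set (which is harmless, as any Lebesgue null set is contained in one). Writing $\RR^{d-1}$ as the countable union of cubes $[-n,n]^{d-1}$, each slab $N \times [-n,n]^{d-1}$ has Lebesgue measure $\lvert N \rvert \cdot (2n)^{d-1} = 0$ by Tonelli's theorem, so $N \times \RR^{d-1}$ is $\sigma$-finite and null. Enclosing the original $\cE$ in a Borel null set $\cE' \supseteq \cE$, one gets $f^{-1}(\cE) \subseteq f^{-1}(\cE')$, which by the argument above lies inside a set of Lebesgue measure zero; hence $f^{-1}(\cE)$ is Lebesgue null as claimed. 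There is no real obstacle here — the only point requiring a moment of care is the distinction between Lebesgue-measurable and Borel null sets, which is handled by the standard enclosure trick.
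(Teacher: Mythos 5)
Your proposal is correct, but it takes a genuinely different route from the paper. The paper argues measure-theoretically via the pushforward $f\!.\mu^{\pa}_{\mathrm{L}}$ of $d$-dimensional Lebesgue measure: using linearity of $f$, it shows this pushforward is a translation-invariant Borel measure on $\RR$, identifies it (up to a positive constant) with Haar measure, and concludes that it annihilates precisely the Lebesgue null sets of $\RR$, which is the claim. You instead reduce to a coordinate situation by an orthogonal map sending $a$ to $\lVert a \rVert \pts e^{\pa}_{1}$, recognise $f^{-1}(\cE)$ as a rotated slab $O^{T}\bigl((\lVert a\rVert^{-1}\cE)\times\RR^{d-1}\bigr)$, and dispose of it with Tonelli's theorem after the standard enclosure of $\cE$ in a Borel null set; your attention to the Lebesgue-versus-Borel measurability point is exactly right. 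What your route buys is complete elementarity and explicitness: in particular, it never has to confront the fact that for $d\geqslant 2$ the pushforward $f\!.\mu^{\pa}_{\mathrm{L}}$ is not locally finite (preimages of bounded intervals are unbounded slabs of infinite measure), so the appeal to uniqueness of Haar measure in the paper's argument requires an extra word of care that your slab computation simply avoids. What the paper's route buys is brevity and independence of any choice of coordinates, with the translation-invariance idea doing all the work.
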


\begin{proof}
  Let $\mu^{\pa}_{\mathrm{L}}$ and $\nu^{\pa}_{\mathrm{L}}$ denote
  Lebesgue measure in $\RR^d$ and $\RR$, respectively. Clearly, the
  linear mapping $f$ is differentiable, with
  $\nabla \nts f (x) = a \ne 0$ for all $x\in\RR^d$, hence certainly
  measurable and surjective. Now, the pushforward
  $f \! .  \mu^{\pa}_{\mathrm{L}}$ defines a regular Borel measure on
  $\RR$, with
  $\bigl( f \! . \mu^{\pa}_{\mathrm{L}}\bigr) (\cE) =
  \mu^{\pa}_{\mathrm{L}} \bigl( f^{-1} (\cE)\bigr)$
  for \emph{any} Borel set $\cE\subseteq\RR$; compare
  \cite[Thm.~39.C]{Halmos}. Due to the linearity of $f$, for any
  $t\in\RR$, we have $f^{-1} (t + \cE) = z^{\pa}_{t} + f^{-1} (\cE)$
  for some $z^{\pa}_{t} \in \RR^d$ with $f (z^{\pa}_{t}) = t$, which
  covers the empty set via the standard convention
  $x + \varnothing = \varnothing$.
   
  This property implies
  $\bigl( f\! . \mu^{\pa}_{\mathrm{L}}\bigr) ( t+\cE) = \bigl( f\!
  . \mu^{\pa}_{\mathrm{L}}\bigr) ( \cE)$
  for all $t\in\RR$ and all Borel sets $\cE$, which means that
  $f\! . \mu^{\pa}_{\mathrm{L}}$ is translation invariant and thus a
  multiple of Haar measure on $\RR$.  Consequently, we have
  $f\! . \mu^{\pa}_{\mathrm{L}} = c \pts\pts \nu^{\pa}_{\mathrm{L}}$,
  where $c>0$ follows from $a\ne 0$.  This means that
  $f\! . \mu^{\pa}_{\mathrm{L}}$ and $\nu^{\pa}_{\mathrm{L}}$ are
  equivalent as measures, and our claim on the Lebesgue null sets
  follows.
\end{proof}

\begin{fact}\label{fact:lin-dist}
  Let\/ $\alpha \in \RR$ with\/ $\lvert \alpha \rvert > 1$ be given
  and let $f$ be the linear form from
  Fact~\textnormal{\ref{fact:null-lift}}.  Then, for Lebesgue-a.e.\
  $x\in\RR^d$, the sequence\/ $\bigl(f (\alpha^n x)\bigr)_{n\in\NN}$
  is uniformly distributed modulo\/ $1$.
\end{fact}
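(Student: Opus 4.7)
The plan is to reduce the claim to the classical one-dimensional result of Koksma, which states that for any fixed real $\beta$ with $\lvert\beta\rvert>1$, the sequence $(\beta^n y)^{\pa}_{n\in\NN}$ is uniformly distributed modulo $1$ for Lebesgue-a.e.\ $y\in\RR$. The whole idea is to exploit the linearity of $f$: since $f(\alpha^n x) = \alpha^n f(x)$, the behaviour of the sequence in question is entirely governed by the single real number $f(x) = \langle a \pts | \pts x \rangle$, so the multi-dimensional statement should follow from the one-dimensional one together with Fact~\ref{fact:null-lift}.

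Concretely, I would proceed as follows. First, invoke the classical Koksma theorem with $\beta = \alpha$ to obtain a Lebesgue null set $N\subset\RR$ such that, for every $y\in\RR\setminus N$, the sequence $(\alpha^n y)^{\pa}_{n\in\NN}$ is uniformly distributed modulo $1$. Second, set $N' := f^{-1}(N)$. By Fact~\ref{fact:null-lift}, applied to the non-singular linear form $f$, the preimage $N'$ is a Lebesgue null set in $\RR^d$. Third, pick any $x\in\RR^d\setminus N'$, so that $y := f(x) \notin N$. By the linearity of $f$, one has $f(\alpha^n x) = \alpha^n f(x) = \alpha^n y$ for every $n\in\NN$, and by the choice of $y$, this sequence is uniformly distributed modulo $1$. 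Since $N'$ is a null set, the claim follows.

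The only genuinely non-trivial input is the one-dimensional Koksma theorem, which handles both signs of $\alpha$ once $\lvert\alpha\rvert>1$ (for $\alpha<-1$ one may, if preferred, pass to $\alpha^2$ and split the sequence into its even- and odd-indexed subsequences, applying Koksma with the positive base $\alpha^2$ to $y$ and $\alpha y$ respectively). The main (mild) obstacle is conceptual rather than technical: one must recognise that the apparently $d$-dimensional randomness of $x$ collapses, via $f$, to the single real parameter $y=f(x)$, and that Fact~\ref{fact:null-lift} is the precise tool that certifies the harmlessness of this collapse with respect to Lebesgue null sets. No estimates with Weyl sums are needed beyond those already contained in Koksma's theorem.
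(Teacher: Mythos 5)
Your proof is correct and is essentially the paper's own argument: use linearity to write $f(\alpha^n x)=\alpha^n\langle a\,|\,x\rangle$, invoke the standard one-dimensional metric result (cited in the paper from Kuipers--Niederreiter, which you call Koksma's theorem) to get a null set of bad values $t=f(x)$, and pull it back to a null set in $\RR^d$ via Fact~\ref{fact:null-lift}. No substantive difference from the paper's proof.
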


\begin{proof}
  One has $f(\alpha^n x) = \alpha^n \pts t$ with
  $t=\langle a \pts | \pts x \rangle$ and $a\ne 0$. Clearly,
  $(\alpha^n \pts t)^{\pa}_{n\in\NN}$ is uniformly distributed modulo
  $1$ for a.e.\ $t\in\RR$ by standard results from uniform
  distribution theory; compare \cite[Thm.~4.3 and Exc.~4.3]{KN}. If
  $\cE$ is the corresponding null set of exceptional points, uniform
  distribution modulo $1$ of $\bigl(f (\alpha^n x)\bigr)_{n\in\NN}$
  fails precisely for all $x\in f^{-1} (\cE) \subset \RR^d$. By
  Fact~\ref{fact:null-lift}, $f^{-1} (\cE)$ is a null set in $\RR^d$,
  which implies the claim.
\end{proof}

\begin{lemma}\label{lem:d-equi}
  Let\/ $\alpha \in \RR$ with\/ $\lvert \alpha \rvert > 1$ be fixed.
  Then, for Lebesgue-a.e.\ $x\in\RR^d$, the sequence\/
  $(\alpha^n x)^{\pa}_{n\in\NN}$ taken modulo\/ $1$ is uniformly
  distributed in\/ $\TT^d$.
\end{lemma}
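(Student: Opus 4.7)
The plan is to reduce the $d$-dimensional equidistribution to the one-dimensional result already available through Fact~\ref{fact:lin-dist}, via Weyl's equidistribution criterion in $\TT^d$. Recall that a sequence $(y_n)^{\pa}_{n\in\NN}$ in $\TT^d$ is uniformly distributed if and only if, for every $0 \ne k \in \ZZ^d$,
\[
   \lim_{N\to\infty} \frac{1}{N} \sum_{n=1}^{N}
   \ee^{2 \pi \ii \langle k \pts | \pts y_n \rangle} \, = \, 0 \pts .
\]
So, for the sequence $(\alpha^n x)^{\pa}_{n\in\NN}$ taken modulo $1$, it suffices to establish this criterion simultaneously for all non-zero $k\in\ZZ^d$, outside of a single Lebesgue null set in $\RR^d$.

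First, I would fix an arbitrary $0 \ne k \in \ZZ^d$ and consider the non-singular linear form $f^{\pa}_{k} (x) = \langle k \pts | \pts x \rangle$. Since $\langle k \pts | \pts \alpha^n x \rangle = \alpha^n f^{\pa}_{k} (x)$, Fact~\ref{fact:lin-dist} applies and yields a Lebesgue null set $\cE^{\pa}_{k} \subset \RR^d$ such that, for every $x \in \RR^d \setminus \cE^{\pa}_{k}$, the real sequence $\bigl(f^{\pa}_{k} (\alpha^n x)\bigr)^{\pa}_{n\in\NN}$ is uniformly distributed modulo~$1$. By the classical one-dimensional Weyl criterion, applied with the integer $m=1$, this immediately gives
\[
   \lim_{N\to\infty} \frac{1}{N} \sum_{n=1}^{N}
   \ee^{2 \pi \ii \langle k \pts | \pts \alpha^n x \rangle} \, = \, 0
   \quad \text{for all } x \in \RR^d \setminus \cE^{\pa}_{k} \pts .
\]

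Next, I would set $\cE := \bigcup_{0 \ne k \in \ZZ^d} \cE^{\pa}_{k}$. Since $\ZZ^d$ is countable, $\cE$ is a countable union of Lebesgue null sets in $\RR^d$, hence itself a null set. For every $x \in \RR^d \setminus \cE$, the Weyl sum vanishes in the limit for each $0 \ne k \in \ZZ^d$ simultaneously, and the $d$-dimensional Weyl criterion then yields that $(\alpha^n x)^{\pa}_{n\in\NN}$ taken modulo $1$ is uniformly distributed in $\TT^d$, completing the proof.

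I do not anticipate a genuine obstacle here: the whole argument is a standard reduction, and the main point is just that $\ZZ^d$ is countable, so the union of the exceptional null sets across all frequencies $k$ remains null. The substantive one-dimensional input is already encapsulated in Facts~\ref{fact:null-lift} and~\ref{fact:lin-dist}; everything beyond that is the passage from the scalar Weyl criterion to the vector one via linearity of $\langle k \pts | \pts \cdot \pts \rangle$.
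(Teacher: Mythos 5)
Your proposal is correct and follows essentially the same route as the paper: reduce via the $d$-dimensional Weyl criterion to uniform distribution modulo $1$ of the scalar sequences $\bigl(\alpha^n \langle k \pts | \pts x \rangle\bigr)_{n\in\NN}$ for each $0 \ne k \in \ZZ^d$, invoke Fact~\ref{fact:lin-dist} for each such $k$, and conclude by countability of $\ZZ^d \setminus \{0\}$. The only cosmetic difference is that you extract the vanishing of the Weyl sums directly (using the first harmonic for each $k$), whereas the paper cites the equivalence with one-dimensional uniform distribution of the linear forms; the substance is identical.
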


\begin{proof}
  For $d=1$, this is a well-known result from metric equidistribution
  theory \cite[Ch.~4]{KN}, as mentioned earlier.  For $d>1$ and any
  given $x\in\RR^d$, it is convenient to employ Weyl's criterion
  \cite[Thm.~6.2]{KN} and consider the convergence behaviour of
  character sums. In fact, this implies that uniform distribution of
  $(\alpha^n x)^{\pa}_{n\in\NN}$ modulo~$1$ is equivalent to uniform
  distribution modulo~$1$ of the sequences
  $(\alpha^n \langle k \pts | \pts x \rangle)^{\pa}_{n\in\NN}$ for all
  $k \in \ZZ^d \setminus \{ 0 \}$; compare \cite[Thm.~6.3]{KN}.  For
  each such $k$, let $\cE_k$ be the exceptional set of points
  $x\in\RR^d$ where uniform distribution fails, which is a null set by
  Fact~\ref{fact:lin-dist}.  Since $\ZZ^d \setminus \{ 0 \}$ is
  countable, the set $\bigcup_{k\in\ZZ^d \setminus \{ 0 \}} \cE_k$ is
  still a null set in $\RR^d$, and the claim follows.
\end{proof}

Next, we need to understand averages of various types of periodic and
almost periodic functions, in particular along exponential sequences
of the above type.

\begin{lemma}\label{lem:trig-poly}
   Let\/ $\alpha \in \RR$ with\/ $\lvert \alpha \rvert > 1$ be fixed.
   For any\/ $a\in\RR^d$ and then a.e.\ $x\in\RR^d$, one has
\[
   \lim_{N\to\infty} \myfrac{1}{N} \sum_{n=0}^{N-1}
   \ee^{2 \pi \ii \pts \alpha^n \langle a \pts | \pts x \rangle}
   \, = \, \delta^{\pa}_{a,0} \pts .
\]
\end{lemma}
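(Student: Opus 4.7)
The plan is to split into the two cases $a=0$ and $a\neq 0$, both of which reduce quickly to results already at hand. If $a=0$, then $\langle a\pts|\pts x\rangle = 0$, so every summand equals $1$ and the Cesàro average is trivially $1 = \delta^{\pa}_{0,0}$, valid for every $x\in\RR^d$ (in particular, a.e.).

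For the nontrivial case $a\ne 0$, I would set $f(x) = \langle a\pts|\pts x\rangle$ and invoke Fact~\ref{fact:lin-dist} directly: this gives a Lebesgue null set $N^{\pa}_{a}\subset \RR^d$ off which the sequence
\[
   \bigl(\alpha^n \langle a\pts|\pts x\rangle\bigr)^{\pa}_{n\in\NN}
\]
is uniformly distributed modulo~$1$. For any such $x$, Weyl's criterion (or, equivalently, the very definition of uniform distribution applied to the continuous $1$-periodic function $t\mapsto \ee^{2\pi\ii\pts t}$) yields
\[
   \lim_{N\to\infty}\myfrac{1}{N}\sum_{n=0}^{N-1}\ee^{2\pi\ii\pts \alpha^n\langle a\pts|\pts x\rangle}
   \, = \int_{0}^{1}\ee^{2\pi\ii\pts t}\dd t \, = \, 0 \, = \, \delta^{\pa}_{a,0}\pts ,
\]
as required. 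The exceptional null set depends on $a$, but since $a$ is fixed, this poses no issue.

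There is essentially no obstacle here; the entire argument is a bookkeeping exercise once Fact~\ref{fact:lin-dist} is in place, together with the elementary observation that the first Fourier mode of the uniform distribution on $[0,1)$ vanishes. The only subtlety worth flagging explicitly is that the a.e.-statement is formulated for fixed $a$, so one cannot without further work assert joint validity for all $a$ simultaneously; but this is exactly what the statement of the lemma demands.
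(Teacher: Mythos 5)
Your argument is correct and coincides with the paper's own proof: the same case split at $a=0$, the same appeal to Fact~\ref{fact:lin-dist} for uniform distribution of $(\alpha^n \langle a \pts | \pts x \rangle)^{\pa}_{n\in\NN}$ modulo~$1$, and the same application of Weyl's lemma to conclude that the average tends to $\int_{0}^{1} \ee^{2\pi\ii t}\dd t = 0$, with the exceptional null set depending on the fixed $a$. Nothing further is needed.
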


\begin{proof}
  When $a=0$, the limit is $1$ for \emph{all} $x\in\RR^d$, so let
  $a\ne 0$. Then, by Fact~\ref{fact:lin-dist},
  $(\alpha^n \langle a \pts | \pts x \rangle)^{\pa}_{n\in\NN}$ is
  uniformly distributed modulo $1$ for a.e.\ $x\in\RR^d$, where the
  null set $\cE_a$ of exceptions depends on $a$.  So, for any given
  $a$ and then every $x \in\RR^d \setminus \cE_a$, we get
\[
   \lim_{N\to\infty} \myfrac{1}{N} \sum_{n=0}^{N-1}
   \ee^{2 \pi \ii \pts \alpha^n \langle a \pts | \pts x \rangle}
   \, = \int_{0}^{1} \ee^{2 \pi \ii t} \dd t \, = \, 0
\]
  by Weyl's lemma.
\end{proof}

The next step is an extension to (complex) trigonometric polynomials,
as given by
\[
    P_m (x) \, = \, c^{\pa}_{\pts 0} \, + \sum_{\ell=1}^{m}
    c^{\pa}_{\ell} \, \ee^{2 \pi \ii \pts \langle k_{\ell} \pts | \pts x \rangle}
\]
with $m\nts\in\nts\NN_0$ and coefficients $c^{\pa}_{\ell}\nts \in\nts \CC$. When
$m\nts\geqslant\nts 1$, the frequency vectors
$k^{\pa}_{1}, \ldots , k^{\pa}_{m}$ are assumed to be non-zero and
distinct. Clearly, under the conditions of Lemma~\ref{lem:trig-poly},
one obtains
\begin{equation}\label{eq:trig-poly}
   \lim_{N\to\infty} \myfrac{1}{N} \sum_{n=0}^{N-1}
   P_m (\alpha^n x)   \, = \, c^{\pa}_{\pts 0} \, = \, \MM (P_m)
\end{equation}
for a.e.\ $x\in\RR^d$. Here, $\MM (f)$ is the \emph{mean}
of a bounded function,
\begin{equation}\label{eq:def-mean}
   \MM (f) \, := \, \lim_{n\to\infty} \myfrac{1}{ \vol (A_n)}
   \int_{A_n} f(x) \dd x \pts ,
\end{equation}
where $\cA = ( A_n )^{\pa}_{n\in\NN}$ is a fixed sequence of growing
sets for the averaging process. The sets $A_n$ are supposed to be
sufficiently `nice', which means that one assumes a property of
F{\o}lner or van Hove type.  To be concrete, we can think of $A_n$ as
the closed cube of sidelength $n$ centred at $0$. It is clear that the
limit in \eqref{eq:def-mean} exists for trigonometric
polynomials. More generally, it exists for all functions that are
\emph{uniformly almost periodic}, which are often also called Bohr
almost periodic. They are the continuous functions that can uniformly
be approximated by trigonometric polynomials. In other words, the
space of uniformly almost periodic functions is the
$\|.\|^{\pa}_{\infty}$-closure of the space of trigonometric
polynomials; see \cite{Cord} for general results.

\begin{prop}\label{prop:BohrMean}
  Let\/ $f\! : \, \RR^d \xrightarrow{\quad} \CC$ be a uniformly $($or
  Bohr$\, )$ almost periodic function, and let\/ $\alpha \in \RR$
  with\/ $\lvert \alpha \rvert > 1$ be given. Then, for a.e.\
  $x\in\RR^d$, one has
\[
     \lim_{N\to\infty} \myfrac{1}{N} \sum_{n=0}^{N-1}
     f (\alpha^n x) \, = \, \MM (f) \pts .
\]   
  In particular, this applies to functions of the form\/
  $f = \log (g)$ with\/ $g$ a non-negative, uniformly
  almost periodic function that is bounded away from\/ $0$.
\end{prop}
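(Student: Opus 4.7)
The plan is a standard three-epsilon approximation, exploiting the fact that Bohr almost periodic functions are, by definition, uniform limits of trigonometric polynomials, combined with Eq.~\eqref{eq:trig-poly} and the countable stability of null sets.

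First, since $f$ is uniformly almost periodic, I would choose a sequence of trigonometric polynomials $(P_m)^{\pa}_{m\in\NN}$ with $\| f - P_m \|^{\pa}_{\infty} \xrightarrow{m\to\infty} 0$. Each $P_m$ involves only finitely many frequency vectors $k^{\pa}_{1}, \ldots , k^{\pa}_{\ell_m}\in\RR^d$, so Eq.~\eqref{eq:trig-poly} (which follows from Lemma~\ref{lem:trig-poly}) yields a Lebesgue null set $\cE^{\pa}_{m}\subset\RR^d$ such that, for every $x\in\RR^d\setminus\cE^{\pa}_{m}$, the Birkhoff average $\frac{1}{N}\sum_{n=0}^{N-1}P_m(\alpha^n x)$ converges to the constant term of $P_m$, which equals $\MM (P_m)$. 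Setting $\cE := \bigcup^{\pa}_{m\in\NN} \cE^{\pa}_{m}$ gives a Lebesgue null set, because it is a countable union of null sets.

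Next, fix $x\in\RR^d\setminus\cE$ and $\varepsilon >0$. Choose $m$ so large that $\| f - P_m \|^{\pa}_{\infty} <\varepsilon / 3$, which at the same time forces $\lvert \MM (f) - \MM (P_m) \rvert \leqslant \| f - P_m \|^{\pa}_{\infty} < \varepsilon / 3$ via the definition~\eqref{eq:def-mean} of the mean. Since $x\notin \cE^{\pa}_{m}$, for all sufficiently large $N$, the middle term below satisfies $\lvert \frac{1}{N}\sum_{n=0}^{N-1} P_m (\alpha^n x) - \MM (P_m) \rvert <\varepsilon / 3$. Then a triangle inequality, combined with the uniform bound $\lvert \frac{1}{N}\sum_{n=0}^{N-1} (f-P_m)(\alpha^n x)\rvert \leqslant \| f - P_m \|^{\pa}_{\infty}$, yields
\[
   \biggl\lvert \myfrac{1}{N} \sum_{n=0}^{N-1} f(\alpha^n x)
   - \MM (f) \biggr\rvert \, < \, \varepsilon
\]
for all sufficiently large $N$. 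As $\varepsilon$ was arbitrary, this is the claimed almost sure convergence.

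It remains to justify the final addendum on $f=\log (g)$. The hard part, if any, lies here: one needs that $\log \circ g$ is still Bohr almost periodic. This follows from the standard fact that the composition $\varphi \circ g$ of a uniformly continuous function $\varphi$ with a Bohr almost periodic function $g$ is again Bohr almost periodic. Indeed, $g$ is uniformly continuous and bounded (as every Bohr a.p.\ function is), so by assumption $g(\RR^d) \subset [c, \| g \|^{\pa}_{\infty}]$ for some $c>0$. On the compact interval $[c, \| g \|^{\pa}_{\infty}]$, the logarithm is uniformly continuous, so an $\varepsilon$-almost period of $g$ (in the sup-norm sense) is an $\omega (\varepsilon)$-almost period of $\log (g)$, where $\omega$ denotes the modulus of continuity of $\log$ on that interval. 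Hence $\log (g)$ satisfies Bochner's criterion of being Bohr almost periodic, and the first part of the proposition applies.
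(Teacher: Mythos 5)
Your proof is correct, and it is essentially the argument that the paper compresses into citations: the paper disposes of the first claim by referring to \cite[Thm.~6.4.4]{BHL} for $d=1$ (noting that the same chain of arguments works for $d>1$) and of the addendum by citing \cite[Fact~6.14]{BFGR}, whereas you write out the underlying three-epsilon approximation explicitly, resting it on Lemma~\ref{lem:trig-poly} and Eq.~\eqref{eq:trig-poly} exactly as the paper's preparatory material intends, and you prove the stability of Bohr almost periodicity under $\log$ directly via the modulus of continuity of $\log$ on $[c,\|g\|^{\pa}_{\infty}]$. The only points worth flagging are cosmetic: what you verify for $\log(g)$ is Bohr's definition (a relatively dense set of almost periods), not Bochner's criterion, and to match the paper's working definition of uniform almost periodicity as the $\|\cdot\|^{\pa}_{\infty}$-closure of trigonometric polynomials you are implicitly invoking the classical equivalence of these characterisations (also in $\RR^d$) --- which is standard and is precisely what the cited fact in \cite{BFGR} encapsulates, so there is no gap, merely a trade of self-containedness against reliance on that classical equivalence.
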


\begin{proof}
  The first claim for $d=1$ is \cite[Thm.~6.4.4]{BHL}. A close
  inspection of its proof reveals that the same chain of arguments
  also applies to the case $d>1$, which is all we need here.
  
  The second claim follows from the first because
  $g(x) \geqslant \delta > 0$ for all $x\in \RR^d$ implies that
  $\log (g)$ is again a uniformly almost periodic function
  \cite[Fact~6.14]{BFGR}.
\end{proof}

In the attempt to generalise Proposition~\ref{prop:BohrMean} beyond
uniformly almost periodic functions, one difficulty emerges when $f$
is no longer locally Riemann-integrable.  Let us first look at
periodic functions, where we begin by recalling a classic result.

\begin{fact}[{\cite[Lemma~6.3.3]{BHL}}]\label{fact:birk}
  Let\/ $q\in\ZZ$ with\/ $\lvert q \rvert \geqslant 2$ be fixed, and
  consider a function\/ $f\in L^{1}_{\mathrm{loc}} (\RR)$ that is\/
  $1$-periodic. Then,
\[
   \myfrac{1}{N} \sum_{n=0}^{N-1} f (q^n x) \,
   \xrightarrow{\, N \to \infty \,} \int_{0}^{1} \!
   f(y) \dd y \, = \, \MM (f)
\]
   holds for a.e.\ $x\in\RR$.  \qed
\end{fact}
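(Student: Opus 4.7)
The plan is to recognise the averaging on the right-hand side as a Birkhoff average for the multiplication-by-$q$ map on the circle, and then invoke the pointwise ergodic theorem together with the fact that this map is ergodic with respect to Haar measure.

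First, I would set up the dynamical system. Let $\TT = \RR/\ZZ$ and define $T \! : \, \TT \xrightarrow{\quad} \TT$ by $T(y) = qy \bmod 1$. The map $T$ preserves the Haar (Lebesgue) measure $\lambda$ on $\TT$, as one checks by verifying $\lambda(T^{-1}(I)) = \lambda(I)$ on intervals: the preimage of an interval of length $\ell$ is a disjoint union of $\lvert q \rvert$ intervals each of length $\ell/\lvert q \rvert$. Since $f$ is $1$-periodic and $L^1_{\mathrm{loc}}(\RR)$, its restriction to a fundamental domain descends to a function $\tilde{f} \in L^{1}(\TT, \lambda)$, and $f(q^n x) = \tilde{f}(T^n \bar{x})$ where $\bar{x} = x \bmod 1$.

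Second, I would establish ergodicity of $T$ using Fourier series. Suppose $g \in L^{2}(\TT)$ satisfies $g \circ T = g$ $\lambda$-a.e. Expanding $g$ in its Fourier series and comparing coefficients gives $\widehat{g}(k) = \widehat{g}(q k)$ for every $k\in\ZZ$. For any $k\ne 0$, the orbit $\{q^{n} k \pts : \, n\geqslant 0\}$ is infinite because $\lvert q \rvert \geqslant 2$, so the Riemann--Lebesgue type decay $\widehat{g}(q^{n} k) \to 0$ forces $\widehat{g}(k) = 0$. Hence $g$ is constant $\lambda$-a.e., which is the ergodicity of $T$.

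Third, I would apply Birkhoff's pointwise ergodic theorem to $\tilde{f} \in L^{1}(\TT, \lambda)$: for $\lambda$-a.e.\ $y \in \TT$,
\[
    \myfrac{1}{N} \sum_{n=0}^{N-1} \tilde{f}(T^{n} y) \;
    \xrightarrow{\, N \to \infty \,}  \int_{\TT} \tilde{f} \dd \lambda
    \, = \int_{0}^{1} f(y) \dd y \pts .
\]
Let $\cE \subset [0,1)$ be the exceptional $\lambda$-null set. Then $\cE + \ZZ \subset \RR$ is a countable union of translates of a null set, hence a Lebesgue null set in $\RR$. For every $x \in \RR \setminus (\cE + \ZZ)$ we have $\bar{x} \notin \cE$, and the identity $f(q^{n} x) = \tilde{f}(T^{n} \bar{x})$ then yields the desired convergence.

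The only genuinely delicate step is verifying ergodicity, where one must justify that $\widehat{g}(q^{n}k) \to 0$ as $n \to \infty$. For $g \in L^{2}(\TT)$ this is immediate from Bessel's inequality, since the Fourier coefficients are square-summable and the indices $q^{n}k$ are distinct for $k\ne 0$ and $\lvert q \rvert \geqslant 2$. Everything else (the invariance of $\lambda$, the descent $f \mapsto \tilde{f}$, and the null-set lift from $\TT$ to $\RR$) is routine bookkeeping.
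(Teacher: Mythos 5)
Your argument is correct and matches the approach the paper relies on: Fact~\ref{fact:birk} is quoted from \cite[Lemma~6.3.3]{BHL}, and the discussion following it identifies exactly your ingredients, namely invariance and ergodicity of Lebesgue measure on $\TT$ for $y \mapsto q\pts y$ mod $1$ (your Fourier-coefficient argument is the standard proof) combined with Birkhoff's ergodic theorem and the routine lift of the exceptional null set from $\TT$ to $\RR$.
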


The key ingredient to Fact~\ref{fact:birk} is the ergodicity of
Lebesgue measure on $\TT$ for the dynamical system defined by
$x \mapsto q x$ modulo $1$, which permits to use Birkhoff's ergodic
theorem instead of Weyl's lemma and uniform distribution of
$(q^n x)^{\pa}_{n\in\NN}$ for a.e.\ $x\in\RR$. The natural counterpart
on $\TT^d$ can be stated as follows.

\begin{lemma}\label{lem:Q-sample}
  Let\/ $Q$ be a non-singular endomorphism of\/ $\TT^d$ such that
  no eigenvalue is a root of unity, and consider a\/ $\ZZ^d$-periodic
  function\/ $f \in L^{1}_{\mathrm{loc}} (\RR^d)$. Then, for a.e.\
  $x\in\RR^d$, one has
\[
   \myfrac{1}{N} \sum_{n=0}^{N-1} f (Q^n x) \,
   \xrightarrow{\, N \to \infty \,} \int_{\TT^d}
   f(y) \dd y \, = \, \MM (f) \pts .
\]
  In particular, this result applies to every toral endomorphism 
  that is expansive.
\end{lemma}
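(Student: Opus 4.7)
The plan is to deduce the assertion from Birkhoff's pointwise ergodic theorem applied to the measure-preserving system $(\TT^d, \lambda, Q)$, where $\lambda$ denotes Haar measure on $\TT^d$. The eigenvalue hypothesis will supply ergodicity, thereby taking over the role played by uniform distribution in Fact~\ref{fact:birk}.

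First, I would observe that a non-singular endomorphism $Q$ of $\TT^d$ is represented by an integer matrix with $\det Q \ne 0$, and that the induced map on $\TT^d$ preserves $\lambda$. This is a routine change-of-variables calculation: $Q$ is locally a linear bijection with constant Jacobian $\lvert \det Q \rvert$ and globally $\lvert\det Q\rvert$-to-one, so $\lambda\bigl(Q^{-1}(\cE)\bigr) = \lambda(\cE)$ for every Borel set $\cE \subset \TT^d$.

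The main obstacle is the ergodicity of $(\TT^d, \lambda, Q)$ under the stated hypothesis, which is a classical theorem due to Halmos. I would give the standard Fourier-analytic argument: expand any $L^2$-invariant function $h$ as $h = \sum_{k\in\ZZ^d} c_k \chi_k$ with $\chi_k(x) = \ee^{2\pi\ii\langle k|x\rangle}$. Since $\chi_k \circ Q = \chi_{Q^T k}$, the invariance $h \circ Q = h$ translates into $c_{Q^T k} = c_k$, so the Fourier coefficients are constant along each $Q^T$-orbit in $\ZZ^d$. By $L^2$-summability, no nonzero orbit can be finite. If some $v\in\ZZ^d\setminus\{0\}$ satisfied $(Q^T)^n v = v$ for $n\geqslant 1$, then the finite-dimensional $Q^T$-invariant subspace $\mathrm{span}\{v, Q^T v, \ldots, (Q^T)^{n-1} v\}$ would carry $(Q^T)^n$ as the identity, so $Q^T$ (and hence $Q$) would have a root of unity as eigenvalue, contradicting the hypothesis. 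Thus only $c_0$ can be nonzero, and $h$ is $\lambda$-a.e.\ constant.

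With ergodicity in hand, the $\ZZ^d$-periodic function $f$ descends to $\tilde f \in L^1(\TT^d)$, and Birkhoff's theorem yields $\frac{1}{N}\sum_{n=0}^{N-1} \tilde f(Q^n \xi) \to \int_{\TT^d} \tilde f \dd\lambda$ for $\lambda$-a.e.\ $\xi\in\TT^d$. Under the quotient map $\pi\colon \RR^d\to\TT^d$, any $\lambda$-null exceptional set $N \subset \TT^d$ pulls back to $\pi^{-1}(N) = N + \ZZ^d$, a countable union of Lebesgue null sets, hence Lebesgue null in $\RR^d$. Combined with $f(Q^n x) = \tilde f(Q^n x \bmod \ZZ^d)$, this gives the stated convergence for Lebesgue-a.e.\ $x\in\RR^d$. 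The concluding assertion is immediate: expansiveness forces every eigenvalue $\lambda$ of $Q$ to satisfy $\lvert\lambda\rvert \geqslant \alpha > 1$, so $\lambda$ cannot lie on the unit circle, let alone be a root of unity, and the main statement applies.
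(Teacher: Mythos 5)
Your proposal is correct and takes essentially the same route as the paper, which likewise deduces the claim from ergodicity of Lebesgue (Haar) measure on $\TT^d$ under the root-of-unity hypothesis (cited there from Einsiedler--Ward) together with Birkhoff's ergodic theorem, and settles the expansive case exactly as you do; you merely make explicit the standard Fourier-analytic proof of ergodicity and the pullback of null sets from $\TT^d$ to $\RR^d$. (One small wording fix: square-summability is what rules out a nonzero constant coefficient along an \emph{infinite} $Q^T$-orbit, while the absence of root-of-unity eigenvalues rules out finite nonzero orbits --- which is what your subsequent argument actually establishes.)
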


\begin{proof}
  Under our assumptions, Lebesgue measure is an invariant and ergodic
  measure for the dynamical system defined by $Q$ on $\TT^d $; see
  \cite[Cor.~2.20]{EW}. The main statement now follows from Birkhoff's
  ergodic theorem. Since all eigenvalues of an expansive
  $Q\in \mathrm{End} (\TT^d)$ satisfy $\lvert \lambda \rvert > 1$, the
  last claim is clear.
\end{proof}

Beyond Fact~\ref{fact:birk} and Lemma~\ref{lem:Q-sample}, we will need
the following result, which can be viewed as a variant of Sobol's
theorem \cite{Sobol}; see also \cite{Hart,BHL}.

\begin{lemma}\label{lem:gen-Sobol}
  Let\/ $p \geqslant 0$ be a trigonometric polynomial in\/ $d$
  variables, and let\/ $\alpha \in \RR$ with\/
  $\lvert \alpha \rvert >1$ be fixed. Let us further assume that, for
  some sufficiently small\/ $\delta >0$, the critical points of\/ $p$
  with value in\/ $[0,\delta]$ are isolated.  Then, for
  Lebesgue-a.e.\/ $x\in\RR^d$, one has
\[
    \lim_{N\to\infty} \myfrac{1}{N} \sum_{n=0}^{N-1}
    \log \bigl( p ( \alpha^n x) \bigr) \, = \,
    \MM \bigl( \log (p) \bigr) .
\]
\end{lemma}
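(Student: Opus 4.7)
The plan is to truncate $\log p$ from below so that Proposition~\ref{prop:BohrMean} applies, and then to control the resulting error by using the local analytic structure of $p$ near its isolated zeros. Since $p \geqslant 0$ is a trigonometric polynomial, every zero of $p$ is automatically a critical point, so our hypothesis forces the zero set of $p$ in $\TT^d$ to be finite. By real-analyticity, at a zero $z$ the lowest nontrivial Taylor term of $p$ is a positive semi-definite form; the isolation condition forces it to be positive definite of some even degree $2 k_z$, giving $p (z+h) \asymp \lvert h \rvert^{2 k_z}$ locally. From this one obtains both $\log p \in L^1 (\TT^d)$, so that $\MM (\log p)$ equals $\int_{\TT^d} \log p$, and a Lojasiewicz-type estimate $\lvert \{ p < \eta \} \rvert = O (\eta^\sigma)$ for some $\sigma > 0$ as $\eta \to 0$.

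For each $\delta > 0$, setting $p^{\pa}_{\delta} := \max (p, \delta)$ makes $\log p^{\pa}_{\delta}$ continuous, bounded and $\ZZ^d$-periodic, hence uniformly almost periodic. Proposition~\ref{prop:BohrMean} then gives
\[
   \lim_{N \to \infty} \myfrac{1}{N} \sum_{n=0}^{N-1}
   \log p^{\pa}_{\delta} ( \alpha^n x ) \, = \, \MM (\log p^{\pa}_{\delta})
\]
for a.e.\ $x \in \RR^d$, while monotone convergence yields $\MM (\log p^{\pa}_{\delta}) \searrow \MM (\log p)$ as $\delta \to 0$. Since $\log p \leqslant \log p^{\pa}_{\delta}$ pointwise, this at once produces the upper bound $\limsup_{N \to \infty} \frac{1}{N} \sum_{n<N} \log p (\alpha^n x) \leqslant \MM (\log p)$ for a.e.\ $x$, which is the easy half of the claim.

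The matching lower bound requires controlling the non-negative error $r^{\pa}_{\delta} := \log p^{\pa}_{\delta} - \log p$, which is supported on $\{ p < \delta \}$, lies in $L^1 (\TT^d)$ with $\MM (r^{\pa}_{\delta}) \to 0$ as $\delta \to 0$, but is unbounded near the zeros of $p$. I would split $r^{\pa}_{\delta}$ into its bounded part $\min (r^{\pa}_{\delta}, M)$, handled directly by Proposition~\ref{prop:BohrMean}, and a dyadic tail $r^{\pa}_{\delta} - \min (r^{\pa}_{\delta}, M) \leqslant \sum_{k \geqslant M} (k - M) \, \one^{\pa}_{S_k}$, where $S_k := \{ \ee^{-(k+1)} \leqslant p < \ee^{-k} \}$. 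For every $k$ outside a countable exceptional set (where $\partial S_k$ has Lebesgue measure zero), Lemma~\ref{lem:d-equi} combined with Weyl's criterion gives Birkhoff convergence of $\frac{1}{N} \sum_{n<N} \one^{\pa}_{S_k} (\alpha^n x)$ to $\lvert S_k \rvert$ for a.e.\ $x$, while the Lojasiewicz bound $\lvert S_k \rvert = O (\ee^{-k\sigma})$ ensures summability of $\sum_{k \geqslant M} (k - M) \lvert S_k \rvert$.

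The hard part is the interchange of $\limsup_N$ with the infinite sum over the dyadic levels, which demands a uniform-in-$N$ control of the tail. I plan to pick $M = M (\delta)$ large enough that $\sum_{k \geqslant M} (k - M) \lvert S_k \rvert$ is arbitrarily small, and then either invoke a discrepancy estimate of Erd\H{o}s--Koksma type for the lacunary sequence $(\alpha^n)^{\pa}_{n \in \NN}$, which holds for a.e.\ $x$, to bound the Birkhoff contributions on the sets $S_k$ uniformly in $k$; alternatively, one may envelope each $\one^{\pa}_{S_k}$ between continuous functions so that Proposition~\ref{prop:BohrMean} applies directly and control the approximation error via the Lojasiewicz bound. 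This mirrors the strategy used in the classical proof of Sobol's theorem and the one-dimensional argument behind Fact~\ref{fact:birk}. Letting $M \to \infty$ and then $\delta \to 0$ closes the loop and yields $\liminf_N \frac{1}{N} \sum_{n<N} \log p (\alpha^n x) \geqslant \MM (\log p)$ for a.e.\ $x$, completing the argument.
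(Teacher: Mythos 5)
Your upper-bound half (truncating to $p^{\pa}_{\delta}=\max(p,\delta)$ and applying Proposition~\ref{prop:BohrMean}) is sound, but the lower bound has a genuine gap at exactly the step you yourself flag as the hard part, and neither of the two strategies you propose closes it. Equidistribution or Erd\H{o}s--Koksma discrepancy bounds control $\frac{1}{N}\#\{n<N:\alpha^{n}x\in S_k\}$ only up to an error $D^{\pa}_{N}$ that does not depend on $k$; summed over infinitely many dyadic levels with weights $(k-M)$ this diverges, while the levels that actually threaten the average are those with $k$ of order $N$ or larger, where a \emph{single} visit contributes $(k-M)/N\gtrsim 1$ and where $\lvert S_k\rvert$ is far below any a.e.\ discrepancy bound, so the discrepancy estimate says nothing about whether such visits occur. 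Enveloping $\one^{\pa}_{S_k}$ by continuous functions has the same defect: it works level by level but gives no uniform-in-$N$ control of the deep tail. The missing ingredient is an argument that prevents the orbit from coming super-exponentially close to the zero set, namely a Borel--Cantelli estimate showing that, for a.e.\ $x$, one has $\mathrm{dist}(\alpha^{n}x,Z)\geqslant n^{-c}$ for all large $n$; this is precisely how the paper proceeds (quoting \cite[Thm.~6.3.5]{BHL} and \cite[Prop.~5.1]{BaGriMa}), whence each bad term is only $O(\log n)$, the levels beyond $k\sim C\log N$ are empty along the orbit, and only then does a finite-level equidistribution argument (or directly Sobol's theorem) finish the proof. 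Without this ingredient the interchange of $\limsup_{N}$ with the sum over levels is not justified.

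Two further points. The lemma concerns trigonometric polynomials with arbitrary real frequency vectors (see the definition preceding Eq.~\eqref{eq:trig-poly} and the application in Section~\ref{sec:block}, where $y^{a}$ with irrational $a$ occurs), so $p$ is in general quasiperiodic rather than $\ZZ^d$-periodic: your reduction to $\TT^d$, the finiteness of the zero set, and the use of Lemma~\ref{lem:d-equi} must be replaced by their quasiperiodic counterparts (uniform discreteness of the critical set $Z$, Stepanov almost periodicity of $\log p$, and Proposition~\ref{prop:BohrMean}). Also, an isolated zero of a non-negative real-analytic function need not have a positive definite lowest-order Taylor form --- consider $(h^{\pa}_{2}-h_{1}^{2})^{2}+h_{1}^{6}$ --- so $p(z+h)\asymp\lvert h\rvert^{2k_z}$ can fail; the \L{}ojasiewicz inequality you mention is the correct tool and still yields local integrability of $\log p$ and the sublevel-set bound.
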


\begin{proof}[Sketch of proof]
  Since the case $p(k)\geqslant \delta>0$ for all $k\in\RR^d$ is
  covered by Proposition~\ref{prop:BohrMean}, we assume
  $\inf_{k\in\RR^d} p(k) = 0$ and thus
  $\inf_{k\in\RR^d} \log (p(k)) = -\infty$, which is the origin of the
  complication. Note, however, that all singularities of $\log ({p})$
  are of logarithmic type and hence locally integrable, so $\log (p)$
  is no longer uniformly, but still Stepanov almost periodic; compare
  \cite[pp.~356--359]{BHL} as well as \cite[Sec.~VI.4]{Cord}.

  Now, we have to deal with the small local minima of $p$.  By
  assumption, there is a $\delta>0$ such that the points $k$ with
  $\nabla\nts p (k) =0$ and $p(k) \in [0,\delta]$ are isolated. As $p$
  is a quasiperiodic function, the set of critical points of this
  type, $Z$ say, must then be uniformly discrete.

  Now, with a Borel--Cantelli argument, compare \cite[Thm.~6.3.5]{BHL}
  and \cite[Prop.~5.1]{BaGriMa}, one can derive that, for a.e.\
  $x\in\RR^d$, the sequence $(\alpha^n x)^{\pa}_{n\in\NN_0}$ stays
  sufficiently far away from $Z$ so that the average via the Birkhoff
  sum ultimately is not distorted by the singularities or almost
  singularities of $\log (p)$, and Sobol's theorem can be
  applied. This gives
\[
     \lim_{N\to\infty} \myfrac{1}{N} \sum_{n=0}^{N-1}
    \log \bigl( {p} ( \alpha^n x) \bigr) \, = \,
    \MM \bigl( \log (p) \bigr)
\]
for a.e.\ $x\in\RR^d$ as claimed.
\end{proof}

At this point, we are set to start the spectral analysis of inflation
systems via their pair correlations, where we begin with the theory in
one dimension.

\section{Results in one dimension}\label{sec:1d}

Let us recall the situation in one dimension from
\cite{BG15,BGM}. Consider a \emph{primitive substitution} $\varrho$ on
an $L$-letter alphabet
$\mathcal{A}=\{a^{\pa}_{1},\dots,a^{\pa}_{\nts L}\}$.  It defines a
unique \emph{symbolic hull} $\XX_{\varrho}$, which is compact and
consists of a single local indistinguishability (LI) class. This hull
can be constructed as the closure of the shift orbit of a two-sided
fixed point of a suitable power of $\varrho$. This shift space gives
rise to a uniquely (in fact, strictly) ergodic dynamical system under
the $\ZZ$-action of the shift, denoted as $(\XX_{\varrho},\ZZ)$.

The corresponding \emph{substitution matrix} $M$ is the primitive
non-negative $L \! \times \! L$-matrix with elements
$M_{ij}=\card_{a^{\pa}_{i}} \bigl(\varrho(a^{\pa}_{j})\bigr)$ and
Perron--Frobenius (PF) eigenvalue $\lambda>1$.  The matching (properly
normalised) right eigenvector of $M$ encodes the letter frequencies,
while the left eigenvector determines the ratios of natural tile
lengths for a consistent geometric \emph{inflation rule}. The latter
acts on $L$ intervals (which are our prototiles), one for each letter,
of lengths corresponding to the entries of the left eigenvector. If
the $L$ intervals do not have distinct lengths, we distinguish
congruent ones by labels (or colours). The inflation map induced by
$\varrho$ then consists of a scaling of the intervals by the inflation
multiplier $\lambda$ and their subsequent dissection into original
prototiles, according to the order determined by the substitution rule
$\varrho$. In this setting, the inflation again defines a strictly
ergodic dynamical system, now (in general) under the continuous
translation action of $\RR$, denoted as $(\YY,\RR)$, with $\YY$ the
new \emph{tiling hull}.

To capture the geometric information, let us collect the relative
positions of the tiles in the inflation map in a set-valued
\emph{displacement matrix} $T$. Each element $T_{ij}$ thus is a set,
viewed as a list of length $M_{ij}$ that contains the relative
positions of the interval (or tile) of type $i$ in the inflated
interval (or supertile) of type $j$ (and is the empty set if
$M_{ij}=0$). To define the distance between tiles, we assign a
reference point to each tile, which we usually choose to be the left
endpoint of the interval. Clearly, since the reference point
determines the tile and its position, the set of (labelled or
coloured) reference points is \emph{mutually locally derivable} (MLD)
with the tiling by intervals.  For a given tiling, define $\vL_{i}$ as
the set of all reference points of tiles of type $i$, and
$\vL=\dot{\bigcup}_{i=1}^{L}\vL_{i}$ as the set of all such reference
points. \smallskip

Let $\nu^{\pa}_{ij} (z)$ with $z \geqslant 0$ be the relative
frequency of the occurrence of a tile of type $i$ (left) and one of
type $j$ (right) at distance $z$, with the understanding that
$\nu^{\pa}_{ij} (-z) = \nu^{\pa}_{ji} (z)$. These are the \emph{pair
  correlation coefficients} of the inflation rule, which exist for all
elements of the hull and are independent of the choice of the element.
Given $\vL$, decomposed as $\vL = \dot{\bigcup}_{i} \vL_i$, one can
represent each coefficient as a limit,
\[
   \nu^{\pa}_{ij} (z) \, = \lim_{r\to\infty}
   \frac{\card \bigl(  B_{r} (0) \cap \vL_{i} \cap
   (\vL_{j} - z) \bigr)}
   {\card ( B_{r} (0) \cap \vL )} \, = \, 
   \frac{\dens\bigl(\vL_{i}\cap (\vL_{j}-z)\bigr)}{\dens(\vL)}
   \, \geqslant \, 0\pts .
\]
Due to the strict ergodicity, one has $\nu^{\pa}_{ij} (z) > 0$ if and
only if $z \in S_{ij} := \vL_{j} - \vL_{i}$, where the sets 
$S_{ij}$ are independent of the choice of
$\vL$ from the hull, because the latter is minimal and thus
consists of a single LI class \cite{TAO}. 

Let us now recall the general renormalisation relations for the 
$\nu^{\pa}_{ij}$ from \cite{BG15,BFGR,BaGriMa}, which are proved
in full generality in \cite{BGM}, also for higher dimensions; see
Eq.~\eqref{eq:renogen} below.

\begin{lemma}\label{lem:ren-eq}
  Let\/ $\nu^{\pa}_{ij}$ be the pair correlation coefficients of the
  geometric inflation rule induced by the primitive\/ $L$-letter
  substitution\/ $\varrho$ with inflation multiplier\/ $\lambda$, and
  let\/ $T$ be the corresponding set-valued displacement matrix. Then,
  they satisfy the identities
\[
    \nu^{\pa}_{ij} (z) \, = \, \myfrac{1}{\lambda} \sum_{m,n=1}^{L}
    \,\sum_{r\in T_{im}}  \,\sum_{s\in T_{jn}}
    \nu^{\pa}_{mn} \left( \myfrac{z+r-s}{\lambda} \right)
\]
  for arbitrary\/ $z\in \RR$.   \qed
\end{lemma}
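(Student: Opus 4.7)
The plan is to exploit the self-similar structure of the inflation at the level of the control point sets $\vL_i$, and to convert it into a density identity whose normalisation is precisely the claimed recursion. The starting point is the fundamental decomposition
\[
  \vL_i \, = \, \dot{\bigcup}_{m=1}^{L} \,\dot{\bigcup}_{r \in T_{im}} (\lambda \pts \vL_m + r) ,
\]
which holds pointwise for any fixed point of a suitable power of the inflation and is a direct reformulation of the geometric inflation rule: each reference point of type $i$ is the reference point of exactly one subtile, appearing at offset $r \in T_{im}$ inside exactly one supertile obtained by inflating a type-$m$ tile with reference point $y \in \vL_m$ to position $\lambda y$.

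Next, I would intersect two such decompositions, one for $\vL_i$ and one for $\vL_j - z$. A point $p$ lies in $\vL_i \cap (\vL_j - z)$ if and only if there exist $m, n, r, s, y, y'$ (uniquely determined by disjointness) with $r \in T_{im}$, $s \in T_{jn}$, $y \in \vL_m$, $y' \in \vL_n$, and $p = \lambda y + r = \lambda y' + s - z$. The latter equation is equivalent to $y' - y = (z+r-s)/\lambda$, so $y$ ranges exactly over $\vL_m \cap \bigl(\vL_n - \myfrac{z+r-s}{\lambda}\bigr)$, yielding the disjoint decomposition
\[
  \vL_i \cap (\vL_j - z) \, = \, \dot{\bigcup}_{m,n=1}^{L} \,\dot{\bigcup}_{r \in T_{im},\, s \in T_{jn}} \Bigl( \lambda \bigl( \vL_m \cap \bigl(\vL_n - \myfrac{z+r-s}{\lambda}\bigr) \bigr) + r \Bigr) .
\]
Taking densities of both sides, using that rescaling by $\lambda$ divides one-dimensional densities by $\lambda$ and that translations leave them invariant, and then dividing through by $\dens (\vL)$, delivers the claimed identity after recognising each resulting quotient as a value of $\nu^{\pa}_{mn}$.

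The main task is to make the disjointness and the density manipulations rigorous. Disjointness is intrinsic to the inflation construction: each tile of the base tiling belongs to exactly one supertile, so the pair $(m, r)$ writing $p \in \vL_i$ as $p = \lambda y + r$ with $r \in T_{im}$ and $y \in \vL_m$ is uniquely determined by $p$. Strict ergodicity of $(\YY, \RR)$ ensures that all densities $\dens\bigl(\vL_m \cap (\vL_n - w)\bigr)$ exist along any van Hove sequence and are independent of the representative drawn from the hull, so the resulting identities between densities are unambiguous and extend beyond the special case of an inflation fixed point to the entire hull. The inner sums over $m, n, r, s$ are finite (bounded by the entries of the substitution matrix $M$), so commuting them with the density limit poses no difficulty. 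The stated relation then holds for every $z \in \RR$, with both sides being $0$ unless $z$ actually occurs as a distance in some, and hence every, tiling of the hull.
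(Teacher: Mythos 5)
Your proposal is correct and is essentially the argument the paper relies on: the lemma is quoted here from \cite{BG15,BFGR,BaGriMa}, with the full proof in \cite{BGM}, and those derivations rest on exactly your ingredients --- the supertile decomposition $\vL_i = \dot{\bigcup}_{m}\dot{\bigcup}_{r \in T_{im}} (\lambda\pts\vL_m + r)$ of the control point sets, its intersection for two types, the $1/\lambda$ scaling of one-dimensional densities, and unique ergodicity for existence and hull-independence of the $\nu^{\pa}_{mn}$. One phrasing caveat: for a tiling fixed only under a proper power of $\varrho$, the one-step decomposition with $\lambda$ and $T$ relates the control points of its $\varrho$-image to its own (not the tiling to itself), but since both tilings lie in the hull, your appeal to hull-independence of the pair correlation coefficients already closes this and, as a bonus, avoids any recognisability assumption, so the identity also covers the periodic case.
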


\begin{remark}
  The identities of Lemma~\ref{lem:ren-eq} have a special structure,
  which we call an \emph{exact renormalisation} for the following
  reason. First, there is a finite subset of identities that close,
  and give what is known as the \emph{self-consistency} part of the
  identities. Then, all remaining relations are purely
  \emph{recursive}, which also implies that the solution space of the
  renormalisation identities is finite-dimensional. This is further
  discussed and explored in \cite{BG15,BGM}.  \exend
\end{remark}

Now, define
$\vU^{\pa}_{ij} = \sum_{z\in S_{ij}} \nu^{\pa}_{ij} (z) \,
\delta^{\pa}_{z}$,
which is a pure point measure for each $1 \leqslant i,j \leqslant L$.
For the measure vector
$\vU = ( \vU^{\pa}_{11}, \vU^{\pa}_{12}, \ldots , \vU^{\pa}_{\nts L L}
)$, we use $f \nts .\vU$ for the componentwise pushforward, where
$f(x) = \lambda x$ as before. With this, Lemma~\ref{lem:ren-eq}
implies the matching relation for the \emph{pair correlation measures}
to be
\[
     \vU \, = \, \myfrac{1}{\lambda}  \bigl( \pts
     \widetilde{\nts \delta^{\pa}_{T} \nts }
     \overset{*}{\otimes} \delta^{\pa}_{T} \bigr) * 
     ( f \nts .\vU ) \pts ,
\]
where $\delta^{\pa}_{T}$ is the measure-valued matrix with elements
$\delta^{\pa}_{T_{ij}}$ and $\overset{*}{\otimes}$ denotes the
Kronecker product of two measure-valued matrices
with convolution as multiplication.

All elements of $\vU$ are Fourier-transformable as measures, which
follows from \cite[Lemma~1]{BG15}.  Thus, we define the 
\emph{Fourier matrix} of our inflation system as
\[
      B (k) \, := \, \overline{\widehat{\delta^{\pa}_{T}}} (k) 
      \, = \, \widehat{\delta^{\pa}_{T}} (-k) \pts ,
\]
which is an $L \!\times \! L$ matrix function with trigonometric
polynomials as entries, and thus analytic in $k$. Now, by Fourier 
transform in conjunction with the convolution theorem, one finds
\begin{equation}\label{eq:ren-FT}
     \widehat{\vU} \, = \, \myfrac{1}{\lambda^2}
     \left( B (.) \otimes \overline{B (.)} \, \right)
     \bigl( f^{-1} \! . \pts \widehat{\vU} \pts \bigr) ,
\end{equation}
to be read as a relation between measure vectors.
The main advantage of this formulation is that we now actually
obtain \emph{three} equations from \eqref{eq:ren-FT} as follows.

Each $\widehat{\vU}_{ij}$ is a measure that has a unique decomposition
into a pure point (\textsf{pp}) and a continuous part, with a
countable supporting set for the pure point part. Taking the union of
the latter over all $i,j$ allows us to define the decomposition
\[
     \widehat{\vU} \, = \, \widehat{\vU}_{\mathsf{pp}}
     +  \widehat{\vU}_{\mathsf{cont}}
\]
with a matching decomposition $\RR = \cE_{\mathsf{pp}} \, \dot{\cup}
\, \cE_{\mathsf{cont}}$. Here, $\cE_{\mathsf{pp}}$ is a countable set,
and we may assume without loss of generality that it is also invariant
under $f$ and $f^{-1}$, for instance by replacing $\cE_{\mathsf{pp}}$
with $\bigcup_{n\in\ZZ} f^n (\cE_{\mathsf{pp}})$, which is still
countable. The complement then still is a valid supporting set for
the continuous part, and also invariant under $f$ and $f^{-1}$.

Repeating this type of argument, we can further split
$\widehat{\vU}_{\mathsf{cont}}$ into its singular continuous
(\textsf{sc}) and absolutely continuous (\textsf{ac}) component, which
goes along with a decomposition
$\RR = \cE_{\mathsf{pp}} \, \dot{\cup} \, \cE_{\mathsf{sc}} \,
\dot{\cup} \, \cE_{\mathsf{ac}}$,
where each supporting set is invariant under $f$ and $f^{-1}$; see
\cite{BGM} for a more detailed discussion of this point. This
decomposition leads to the following result.

\begin{lemma}\label{lem:separate}
   The measure vector\/ $\widehat{\vU}$ satisfies the three separate
   equations
\[
     \widehat{\vU}_{\alpha} \, = \,
      \myfrac{1}{\lambda^2}
     \left( B (.) \otimes \overline{B (.)} \, \right)
     \bigl( f^{-1} \! . \pts  \widehat{\vU}_{\alpha} \bigr) ,
\]
    for\/ $\alpha \in \{ \mathsf{pp}, \mathsf{sc}, \mathsf{ac} \}$.   
\end{lemma}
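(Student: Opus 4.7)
The plan is to start from the renormalisation identity \eqref{eq:ren-FT} and show that the linear operations on its right-hand side respect the Lebesgue decomposition into pure point, singular continuous, and absolutely continuous parts. The scaffolding is already in place in the paragraph preceding the lemma: for each $\alpha \in \{\mathsf{pp}, \mathsf{sc}, \mathsf{ac}\}$, there is a common Borel set $\cE_{\alpha} \subset \RR$ that serves as a supporting set for $\widehat{\vU}_{\alpha}$ (jointly for all $L^2$ components), and each $\cE_{\alpha}$ is invariant under $f$ and $f^{-1}$. I would exploit this invariance together with the fact that the two operations on the right-hand side, namely pushforward by $f^{-1}$ and multiplication by the matrix-valued continuous function $\frac{1}{\lambda^2} (B \otimes \overline{B})$, preserve spectral type componentwise.

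First, I would verify that pushforward by the linear bijection $f^{-1}$ preserves spectral type. For a pure point measure $\mu = \sum c_i \delta_{x_i}$, the pushforward is $\sum c_i \delta_{f^{-1}(x_i)}$, still supported on a countable set; by the invariance $f^{-1} (\cE_{\mathsf{pp}}) \subseteq \cE_{\mathsf{pp}}$, the pp part of $f^{-1}.\widehat{\vU}$ is supported on $\cE_{\mathsf{pp}}$. For an absolutely continuous measure, Eq.~\eqref{eq:forward-ac} rewrites the pushforward as an ac measure with density $(h \circ f)/\lvert \det f^{-1}\rvert$, so the ac part stays ac. Singular continuity is equivalent to being continuous (no atoms) and concentrated on a Lebesgue null set; both properties are preserved by the linear homeomorphism $f^{-1}$, and by the invariance $f^{-1}(\cE_{\mathsf{sc}}) \subseteq \cE_{\mathsf{sc}}$ the sc part stays on $\cE_{\mathsf{sc}}$. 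Consequently,
\[
    \bigl( f^{-1}\! . \pts \widehat{\vU} \bigr)_{\alpha}
    \, = \, f^{-1} \! . \pts \widehat{\vU}_{\alpha}
\]
for each $\alpha$.

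Second, I would check that multiplication of a measure by a continuous function $h$ preserves each spectral part. If $\mu = \mu_{\mathsf{pp}} + \mu_{\mathsf{sc}} + \mu_{\mathsf{ac}}$ is the Lebesgue decomposition, then $h \pts \mu_{\mathsf{pp}}$ is still supported on the same countable set, $h \pts \mu_{\mathsf{ac}}$ has Radon--Nikodym density $h$ times that of $\mu_{\mathsf{ac}}$ and remains locally integrable, and $h \pts \mu_{\mathsf{sc}}$ inherits atomlessness and concentration on the same Lebesgue null set as $\mu_{\mathsf{sc}}$. The entries of $B(\cdot) \otimes \overline{B(\cdot)}$ are trigonometric polynomials, hence continuous, so this observation applies to each entry of the matrix action.

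Combining the two observations, the right-hand side of \eqref{eq:ren-FT} decomposes as a sum of three measures, one supported on each $\cE_{\alpha}$. By the uniqueness of the Lebesgue decomposition and the fact that the supporting sets are pairwise disjoint and chosen consistently for both sides, we can equate the $\alpha$-parts on either side, which yields the three separate identities of the lemma. The only genuine subtlety lies in the bookkeeping of a \emph{joint} invariant supporting set valid simultaneously for all $L^2$ entries of the measure vector, but this is exactly the construction sketched in the paragraph preceding the lemma (with details deferred to \cite{BGM}); granted that, the argument above is routine.
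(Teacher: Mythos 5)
Your argument is correct and follows essentially the same route as the paper's proof: the matrix entries (being continuous, in fact analytic, trigonometric polynomials) and the pushforward by the dilation $f^{-1}$ each preserve spectral type, and restricting Eq.~\eqref{eq:ren-FT} to the invariant supporting sets $\cE_{\alpha}$ together with uniqueness of the Lebesgue decomposition yields the three separate identities. You merely spell out the details that the paper leaves implicit.
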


\begin{proof}
  This is a consequence of the fact that
  $B(k) \otimes \overline{ B(k)}$ is analytic in $k$, hence cannot
  change the spectral type, together with
  $\bigl( f^{-1} \! . \pts \widehat{\vU} \pts \bigr)_{\alpha} = f^{-1}
  \! .  \widehat{\vU}_{\alpha}$
  due to $f$ being a simple dilation, which cannot change the spectral
  type either. The claim now follows from restricting
  Eq.~\eqref{eq:ren-FT} to the supporting sets $\cE_{\alpha}$
  constructed above.
\end{proof}

All three equations have interesting implications, as discussed in
\cite{BG15,BFGR,BGM}. Here, we concentrate on the \textsf{ac} part. To
get some insight into the latter, we denote the Radon--Nikodym density
vector of $\widehat{\vU}_{\mathsf{ac}}$ by $h$.  Then,
Lemma~\ref{lem:separate} results in the relation
\[
    h(k) \, = \, \myfrac{1}{\lambda} \,
    \bigl( B(k) \otimes \overline{B(k)}\pts \bigr)
    h (\lambda k) \pts ,
\]
which has to hold for a.e.\ $k\in\RR$ and can be iterated. Note that
the different power of $\lambda$ in the denominator in comparison to
Lemma~\ref{lem:separate} results from a change of variable
transformation. For values of $k$ with $\det ( B(k)) \ne 0$, it can
also be inverted to get an iteration in the opposite direction.  It is
a crucial observation from \cite{BFGR,BaGriMa,BGM} that the asymptotic
behaviour can be analysed from the simpler iterations
\begin{equation}\label{eq:v-iter}
   v(k) \, = \, \myfrac{1}{\mbox{\small $\sqrt{\lambda}$}\pts}
   \, B(k) \pts v (\lambda k) \quad \text{and} \quad
   v(\lambda k) \, = \mbox{\small $\sqrt{\lambda}$}
   \, B^{-1} (k) \pts v(k) \pts ,
\end{equation}
where the components of $v(k)$ are locally square integrable
functions. Using Fact~\ref{fact:decompose}, this emerges from a
decomposition of $\bigl(h_{ij} (k) \bigr)$, viewed as a positive
semi-definite Hermitian matrix, as a sum of rank-$1$ matrices of the
form $v^{}_i(k) \, {v^{\dagger}_j (k)}$ and the observation that the
overall growth rate is dictated by the maximal growth rate of these
summands; see \cite{BFGR,BGM} for details.

To capture the asymptotic behaviour, one defines the \emph{Lyapunov
  exponents}, compare \cite{Viana}, for the iterations that emerge
from Eq.~\eqref{eq:v-iter}, which is possible when $B(k)$ is
invertible for a.e.\ $k\in\RR$.  It turns out that the required values
can all be related to the extremal Lyapunov exponents of the matrix
cocycle defined by
\begin{equation}\label{eq:def-cycle}
    B^{(n)} (k) \, := \, B(k) \pts B(\lambda k) \cdots
    B(\lambda^{n-1} k) \pts ,
\end{equation}
which happens to be the Fourier matrix of $\varrho^n$.  The quantities
of interest to us here are controlled by the \emph{maximal} Lyapunov
exponent of this cocycle, defined as
\begin{equation}\label{eq:def-L}
   \chi^{B} (k) \, := \, \limsup_{n\to\infty}
   \myfrac{1}{n} \log \big\| B^{(n)} (k) \big\| ,
\end{equation}
where $\| . \|$ refers to any sub-multiplicative matrix norm, such as
the spectral norm or the Frobenius norm. In favourable cases,
$\chi^{B} (k)$ will exist as a limit for a.e.\ $k\in\RR$, as we shall
see later in several examples.  The main criterion can now be
formulated as follows.

\begin{theorem}\label{thm:1D}
  Let\/ $\varrho$ be a primitive substitution on a finite alphabet,
  and consider the corresponding inflation rule with inflation
  multiplier\/ $\lambda = \lambda^{\pa}_{\mathrm{PF}}$ for intervals of
  natural length. Let\/ $\chi^{B} (k)$ be the maximal Lyapunov
  exponent of the Fourier matrix cocycle \eqref{eq:def-cycle}, and
  assume that\/ $\det \bigl( B(k) \bigr) \ne 0$ for at least one\/
  $k\in\RR$.
  
  If there is some\/ $\varepsilon > 0$ such that\/
  $\chi^{B} (k) \leqslant \frac{1}{2}\log (\lambda) - \varepsilon$
  holds for Lebesgue-a.e.\ $k\in\RR$, one has\/
  $\widehat{\vU}_{\mathsf{ac}} = 0$, and the diffraction measure of
  the system is singular.
\end{theorem}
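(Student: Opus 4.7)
The plan is to proceed by contradiction. Assume $\widehat{\vU}_{\mathsf{ac}} \ne 0$ and let $h = (h_{ij})$ denote its Radon--Nikodym density vector. By Lemma~\ref{lem:separate} applied to the \textsf{ac} component together with the scaling formula from Eq.~\eqref{eq:forward-ac}, $h$ satisfies
\[
    h(k) \, = \, \myfrac{1}{\lambda} \bigl( B(k) \otimes \overline{B(k)} \bigr) h (\lambda k)
\]
for a.e.\ $k \in \RR$. Equivalently, the Hermitian positive semi-definite matrix $H(k) = (h_{ij}(k))$ obeys $H(k) = \lambda^{-1} B(k) \pts H(\lambda k) \pts B(k)^{*}$, which preserves semi-definiteness.

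Next, I would reduce from $H$ to scalar vector iterations via Fact~\ref{fact:decompose}. Pointwise a.e., write $H(k) = \sum_{r} v_r(k) \pts v_r(k)^{*}$ with at most $L$ nonzero rank-$1$ summands, chosen in a measurable fashion (possible since eigen-projectors of a Hermitian matrix-valued measurable function admit a measurable selection). Translation-boundedness of $\widehat{\vU}$ passes to $\widehat{\vU}_{\mathsf{ac}}$, and since $\|v_r(k)\|^{2} \leqslant \mathrm{tr}(H(k))$, this gives a uniform local density bound $\sup_{t\in\RR} \int_{t+K} \|v_r(k)\|^{2} \dd k \leqslant C$ for any fixed compact $K$ of positive volume. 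The matrix identity then factors through to $v_r(k) = \lambda^{-1/2} B(k) v_r(\lambda k)$ (up to a phase that may be absorbed), compare Eq.~\eqref{eq:v-iter}, and iterating yields
\[
    v_r(k) \, = \, \lambda^{-n/2} B^{(n)}(k) \pts v_r(\lambda^{n} k) \pts .
\]

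Now the Lyapunov hypothesis enters. For a.e.\ $k\in\RR$, the assumption $\chi^{B}(k) \leqslant \frac{1}{2}\log(\lambda) - \varepsilon$ implies $\lambda^{-n/2} \|B^{(n)}(k)\| \leqslant \ee^{-n\varepsilon/2}$ for all $n \geqslant N(k)$. By Egoroff's theorem, on any bounded Borel set $\cA \subset \RR$ and any $\delta > 0$, this bound becomes uniform on a subset $\cA_{\delta} \subset \cA$ with $\vol(\cA \setminus \cA_{\delta}) < \delta$, for $n \geqslant N_{\delta}$. Taking norms squared in the iterated identity, integrating over $\cA_{\delta}$, and changing variables $k' = \lambda^{n} k$, one obtains
\[
    \int_{\cA_{\delta}} \|v_r(k)\|^2 \dd k \, \leqslant \, \ee^{-n\varepsilon} \lambda^{-n} \int_{\lambda^{n} \cA_{\delta}} \|v_r(k')\|^2 \dd k' \, \leqslant \, C' \vol(\cA_{\delta}) \, \ee^{-n\varepsilon}
\]
by the uniform density bound applied to $\lambda^{n} \cA_{\delta}$, which has volume $\lambda^{n} \vol(\cA_{\delta})$. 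Sending $n\to\infty$ and then $\delta \to 0$ forces $v_r = 0$ a.e.\ on $\cA$; exhausting $\RR$ by bounded sets yields $v_r \equiv 0$, hence $H \equiv 0$, a contradiction. Singularity of the diffraction measure then follows because $\widehat{\gamma}$ arises as a fixed linear combination of the entries of $\widehat{\vU}$, so the vanishing of $\widehat{\vU}_{\mathsf{ac}}$ forces $\widehat{\gamma}_{\mathsf{ac}} = 0$ as well.

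The main obstacle is the exchange between pointwise almost-everywhere Lyapunov control and the integrated estimate: the threshold $N(k)$ depends on $k$, and the factor $\|v_r(\lambda^{n} k)\|$ is not pointwise bounded. Egoroff's theorem handles uniformity of the Lyapunov bound, while translation-boundedness of $\widehat{\vU}_{\mathsf{ac}}$ --- ultimately the conceptual reason the threshold is $\frac{1}{2}\log(\lambda)$ rather than $\log(\lambda)$ --- supplies the $\lambda^{n}$ growth budget that exactly cancels the Jacobian of the dilation. The hypothesis $\det B(k) \ne 0$ at some point, combined with analyticity of $\det B$, ensures $B(k)$ is invertible for a.e.\ $k$, so the rank-$1$ vector iteration faithfully encodes the asymptotics of the full matrix cocycle and no additional kernel issues intervene.
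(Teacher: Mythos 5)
Your overall strategy coincides with the one the paper (following \cite{BFGR,BGM}) uses: restrict the renormalisation relation to the \textsf{ac} part via Lemma~\ref{lem:separate}, pass to the Radon--Nikodym density matrix, iterate the Fourier-matrix cocycle, and play the bound $\chi^{B}(k)\leqslant \frac{1}{2}\log(\lambda)-\varepsilon$ against translation-boundedness of $\widehat{\vU}$. The only real difference is the direction of iteration: the paper's sketch derives exponential \emph{growth} of $h(\lambda^{n}k)$, which requires inverting $B^{(n)}(k)$ (this is where the hypothesis $\det B(k)\neq 0$ for some, hence a.e., $k$ enters), whereas you keep the forward relation $h(k)=\lambda^{-n}\bigl(B^{(n)}(k)\otimes\overline{B^{(n)}(k)}\bigr)h(\lambda^{n}k)$, uniformise the Lyapunov estimate on a bounded set (Egoroff, or simply choosing a subset where $N(k)$ is bounded), and let the Jacobian $\lambda^{-n}$ of the dilation cancel the $O(\lambda^{n})$ budget that translation-boundedness allows. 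This contraction variant is sound and has the small bonus that invertibility of $B$ is never actually used in your estimate, so your closing remark about kernel issues is superfluous rather than necessary.

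One intermediate claim is false as stated, though easily repaired: having fixed a measurable rank-one decomposition $H(k)=\sum_{r}v_{r}(k)\pts v_{r}(k)^{*}$, you assert that the matrix identity ``factors through'' to $v_{r}(k)=\lambda^{-1/2}B(k)\pts v_{r}(\lambda k)$ up to a phase. Rank-one decompositions of a positive semi-definite matrix are not unique (any unitary mixing of the $v_{r}$ yields another one), and pushing a chosen decomposition of $H(\lambda k)$ through $B(k)$ produces \emph{some} rank-one decomposition of $H(k)$, in general not the one you fixed; this is precisely why the paper presents \eqref{eq:v-iter} only as a simpler model iteration that controls the asymptotics, not as an exact pointwise factorisation. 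Fortunately your estimate never needs the vector identity: from $H(k)=\lambda^{-n}B^{(n)}(k)\pts H(\lambda^{n}k)\pts B^{(n)}(k)^{*}$ one gets directly $\operatorname{tr}H(k)\leqslant \lambda^{-n}\|B^{(n)}(k)\|^{2}\operatorname{tr}H(\lambda^{n}k)$, and $\operatorname{tr}H\geqslant 0$ has uniformly bounded integrals over unit intervals because the diagonal measures $\widehat{\vU}_{ii}$ are positive and translation bounded. Running your Egoroff/change-of-variables argument on $\operatorname{tr}H$ yields $\operatorname{tr}H=0$ a.e., hence $H=0$ a.e.\ and $\widehat{\vU}_{\mathsf{ac}}=0$; alternatively, decompose $H$ at the points $\lambda^{n}k$ and push those vectors forward. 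A minor bookkeeping point: translation-boundedness bounds $\lambda^{-n}\int_{\lambda^{n}\cA_{\delta}}\|v_{r}\|^{2}$ by a constant times $\vol(\cA)+\lambda^{-n}$ (covering the stretched interval by unit intervals), not by $C'\vol(\cA_{\delta})$, but this does not affect the conclusion.
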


\begin{proof}[Sketch of proof]
  Under our assumptions,\footnote{Since $\det \bigl( B(k)\bigr)$ is a
    trigonometric polynomial, it is either identically $0$ or has
    isolated zeros.}  for a.e.\ $k\in\RR$ with $k\ne 0$, the sequence
  $\bigl( h (\lambda^n k) \bigr)_{n\in\NN}$ of Radon--Nikodym density
  vectors, as $n\to\infty$, displays an exponential growth of order
  $\ee^{2 (D-\delta) n}$, where
  $D = \frac{1}{2} \log (\lambda) - \chi^{B} (k) \geqslant \varepsilon
  > 0$
  and $\delta>0$ can be chosen such that $D-\delta>0$. The implied
  constant will depend on $k$ and $\delta$.  Such a behaviour is
  incompatible with the translation-boundedness of the components of
  $\widehat{\vU}_{\mathsf{ac}}$, which is a contradiction unless
  $h (k) = 0$ for a.e.\ $k\in\RR$, hence
  $\widehat{\vU}_{\mathsf{ac}} = 0$.  For further details, we refer to
  \cite[Sec.~6.7 and App.~B]{BFGR} as well as to the general treatment
  in \cite{BGM}.
\end{proof}

\begin{remark}\label{rem:more}
  The statement of Theorem~\ref{thm:1D} can be strengthened and
  extended in various ways. First of all, one can show that
  $\chi^{B}(k)\leqslant \log\sqrt{\lambda}$ holds for a.e.\ $k\in\RR$.
  As a consequence, a non-trivial \textsf{ac} diffraction component is
  only possible when $\chi^{B}(k) = \log\sqrt{\lambda}$ is true for
  $k$ in a subset of positive measure in every interval of the form
  $[-\lambda a, -a]$ or $[a, \lambda a]$ with $a>0$. When $\lambda$ is
  a PV number without any further restriction, which thus also covers
  all primitive inflation rules of constant length as well as those
  with integer inflation factor, the relation must even hold for
  Lebesgue-a.e.\ $k\in\RR$; see \cite{BGM} for details.  This
  poses severe restrictions on the existence of \textsf{ac}
  diffraction in inflation systems beyond the necessary criterion of
  Berlinkov and Solomyak \cite{BS}.  \exend
\end{remark}

\section{Consequences and an application}\label{sec:ex-no-ac}

For the Fibonacci inflation, the exact renormalisation for the pair
correlation functions was used to establish a spectral purity result
and then pure point spectrum \cite{BG15}, thus confirming a known
property in an independent way. The same line of thought works for all
noble means inflations in complete analogy.

It is tempting to expect a similar result for all irreducible PV
inflations, but one quickly realises that spectral purity is
essentially equivalent to almost everywhere injectivity of the factor
map onto the maximal equicontinuous factor (MEF). While the existence
of non-trivial point spectrum in one-dimensional inflation tilings
requires $\lambda$ to be a PV number \cite{Boris}, it is the exclusion
of any continuous spectral component that would settle the (still
open) Pisot substitution conjecture.

A less ambitious task thus is to establish the mere absence of
absolutely continuous diffraction or spectral measures.  It has long
been `known' (without mathematical proof) that the presence of
\textsf{ac} diffraction requires a particular scaling property of the
diffraction measure as a function of the system size.  This stems from
the heuristic expectation that a structure has an \textsf{ac}
diffraction spectrum if its fluctuations are somewhat similar to those
of a disordered random structure, so fluctuations growing as
$\sqrt{N}$ for a chain of length $N$, in line with the law of large
numbers. This behaviour corresponds to a wandering exponent equal to
$\frac{1}{2}$; see \cite{Aubry,GL,Luck} for an application to
aperiodic structures.

In the case of constant-length substitutions, this effectively
corresponds to a condition on the spectrum of the substitution matrix
$M$. Namely, if $\lambda$ is its PF eigenvalue, $M$ must also have an
eigenvalue $\sqrt{\lambda}$ or one of that modulus.  The necessity of
an eigenvalue of modulus $\sqrt{\lambda}$ for the existence of an
\textsf{ac} spectral measure was recently proved in \cite{BS}. That
this criterion is necessary, but not sufficient, can be shown by an
example, for instance using the constant-length substitution
\begin{equation}\label{eq:def-gegen}
    a \mapsto ab \, , \quad b \mapsto ca \, , \quad c \mapsto bd \, ,
    \quad d \mapsto dc
\end{equation}
on the $4$-letter alphabet $\{ a,b,c,d \}$. The substitution matrix
reads
\[
     M \, = \, \begin{pmatrix}
     1 & 1 & 0 & 0 \\ 1 & 0 & 1 & 0 \\
     0 & 1 & 0 & 1 \\ 0 & 0 & 1 & 1 \end{pmatrix}
\]
and has spectrum $\{ 2, \pm \sqrt{2}, 0 \}$, hence clearly satisfies
the $\sqrt{\lambda}\,$-criterion. Nevertheless, as was shown in
\cite{CG} on the basis of Bartlett's algorithmic classification of
spectral types \cite{Bart}, all spectral measures of this substitution
are singular.

Let us apply Lyapunov exponents to reach this conclusion in an
independent way. It is straight-forward to calculate
\[
   T \, = \, \begin{pmatrix}
   \{ 0 \} & \{ 1 \} & \varnothing & \varnothing \\
   \{ 1 \} & \varnothing & \{ 0 \} & \varnothing \\
   \varnothing & \{ 0 \}  & \varnothing & \{ 1 \}\\
   \varnothing & \varnothing & \{ 1 \} & \{ 0 \} \end{pmatrix}
   \quad \text{and} \quad
   B (k) \, = \, \begin{pmatrix}
   1 & z & 0 & 0 \\ z & 0 & 1 & 0 \\
   0 & 1 & 0 & z \\ 0 & 0 & z & 1 \end{pmatrix}
\]
where $z = \ee^{2 \pi \ii k}$.  One has $\det \bigl( B(k) \bigr) = z^4
- 1$ which vanishes only for $k\in\frac{1}{4} \ZZ$, so that $B(k)$ is
invertible for a.e.\ $k\in \RR$. Let us now, for $n\in\NN$, define the
matrices
\begin{equation}\label{eq:cocycle-def}
    B^{(n)} (k) \, := \,
    B(k) \pts B(2 k) \pts B(4 k)\cdots B(2^{n-1} k) \pts .
\end{equation}
By definition, $B^{(1)} = B$ is the Fourier matrix of $\varrho$, while
$B^{(n)}$ is the Fourier matrix of $\varrho^n \nts$, and hence a
natural object to study in this context.\footnote{Notice that, while the
  Fourier matrices $B(k)$ for different $k$ do generally not commute,
  the matrices $B^{(2)}(k)=B(k)\pts B(2k)$, which correspond to the
  square of the substitution rule \eqref{eq:def-gegen}, in fact form a
  commuting family of matrices. This corresponds to the fact that the
  substitution is non-Abelian in the sense of \cite{Q}, meaning that
  the column-wise letter permutations do not commute, while its
  square becomes Abelian.}

Since the substitution is of constant length, $B^{(n)}$ defines a
cocycle over the compact dynamical system defined by $k \mapsto 2 k$
modulo $1$ on $\TT$. We thus have Oseledec's multiplicative ergodic
theorem \cite{Viana} at our disposal, which implies that the Lyapunov
exponents exist for a.e.\ $k\in\RR$ and satisfy forward Lyapunov
regularity, hence in particular sum to
\[
    \lim_{n\to\infty} \myfrac{1}{n} \sum_{\ell=0}^{n-1}
    \log \pts \bigl| \det \bigl( B(2^{\ell} k) \bigr) \bigr| \, = \,
    \MM \bigl( \log \pts \bigl| \det \bigl(B(.) \bigr) \bigr| \bigr)
    \, = \, \fm (z^4 - 1) \, = \, 0 \pts ,
\]
where the first equality is a consequence of Birkhoff's ergodic
theorem, as detailed in Fact~\ref{fact:birk}, while the last step
follows directly from Fact~\ref{fact:cyclo}.

To continue, it is helpful to observe that $B(k)$ admits a
$k$-independent splitting of $\CC^4$ into a two-dimensional and two
one-dimensional subspaces. Concretely, one finds
\[
  U B(k) \pts U^{-1} \, = \, \begin{pmatrix} 1 \! + \!
    z & 0 & 0 & 0 \\
    0 & 1\! - \! z & 0 & 0 \\ 0 & 0 & -z & 1 \\
    0 & 0 & 1 & z \end{pmatrix}
    \quad \text{with} \quad  U \, = \, \myfrac{1}{2} \begin{pmatrix}
    1 & 1 & 1 & 1 \\ 1 & -1 & -1 & 1 \\ 1 & -1 & 1 & -1 \\
    1 & 1 & -1 & -1 \end{pmatrix} ,
\]
where the unitary matrix $U$ is an involution, so $U^{-1} = U$.  By
standard arguments, it is now clear that two of the four exponents are
given by $\fm (1+z) = 0$ and $\fm (1-z) =0$, which derives from the
invariant one-dimensional subspaces. The remaining two exponents must
still sum to $0$, and can be determined from the induced cocycle
$\tilde{B}^{(n)} (k) = \tilde{B} (k) \pts \tilde{B} (2 k) \cdots
\tilde{B} (2^{n-1} k)$
with
$\tilde{B} (k) = \left( \begin{smallmatrix} -z & 1 \\ 1 &
    z \end{smallmatrix} \right)$.
With $p^{\pa}_{N} (k) := \| \tilde{B}^{(N)} (k) \|^{2}_{\mathrm{F}}$,
which is a trigonometric polynomial due to the use of the Frobenius
norm, we know that
\begin{equation}\label{eq:means}
   \chi^{B} (k) \, = \, \chi^{\tilde{B}} (k) \, \leqslant \,
   \myfrac{1}{N} \pts\pts \MM \bigl( \log \| \tilde{B}^{(N)} (k) 
   \|^{\pa}_{\mathrm{F}} \bigr)  \, = \, \myfrac{\fm (p^{\pa}_{N} )}{2 N}
   \, =: \, m^{\pa}_{N}
\end{equation}
holds for a.e.\ $k\in\RR$ and every $N\in\NN$. Then, one also 
has $\chi^{B} (k) \leqslant \liminf_{N\to\infty} m^{\pa}_{N}$.

\begin{table}
  \caption{Some values \label{tab:num} of the means 
    $m^{\protect \pa}_{N}$ from
    Eq.~\eqref{eq:means}, calculated via Eq.~\eqref{eq:Jensen}.
    The numerical error is always less than $10^{-3}$.\vspace{-1ex}}
\renewcommand{\arraystretch}{1.2}
\begin{tabular}{|c|c@{\;\;}c@{\;\;}c@{\;\;}c@{\;\;}c
      @{\;\;}c@{\;\;}c@{\;\;}c@{\;\;}c@{\;\;}c
      @{\;\;}c@{\;\;}c|}
\hline
$N$ & 1 & 2 & 3 & 4 & 5 & 6 & 7 & 8 & 9 & 10 & 11 & 12\\
\hline
$m^{\pa}_{N}$ & 
0.693 & 0.478 & 0.379 & 0.334 & 0.302 & 0.274 & 0.252 &
0.235 & 0.220 & 0.208 & 0.198 & 0.189\\[0.5mm]
\hline 
\end{tabular} 
\end{table}

Now, employing Jensen's formula again, the numbers $m^{\pa}_{N}$ can
easily be calculated numerically with high precision, and are given in
Table~\ref{tab:num} for $N\leqslant 12$. These values clearly show
that
$\chi^{B} (k) \leqslant \frac{1}{5} < \log\sqrt{2} \approx 0.346
{\pts} 574$, which implies the absence of \textsf{ac} diffraction.

Since we are in the constant-length case, this result translates into
one on the spectral measures via the general results of
\cite[Prop.~7.2]{Q} on the maximal spectral type of a constant-length
substitution; see also \cite[Thm.~3.4]{Bart}. The crucial point to
observe here is that we do not need to consider the spectral measures
of all functions that are square-integrable over the hull, but only
those of the (possibly weighted) lookup functions for the type of
level-$m$ supertile at $0$, for all $m\in\NN_0$.

Our diffraction measure provides the result for the spectral measure
of the lookup functions of the prototiles themselves, compare
\cite{BLvE}, while we can repeat our analysis for any supertile in
noting that this will simply lead to a rescaling, as a result of
Lemma~\ref{lem:linear-FT}. Concretely, the spectral measures will then
be Riesz products of the same type in the sense that only finitely
many initial factors are missing. Since they clearly have the same
Lyapunov exponents and growth rates, our result translates to their
spectral measures as well.  In line with \cite{CG}, but by a
completely different method, we have thus arrived at the following
result.

\begin{coro}\label{coro:spec-singular}
  Consider the dynamical system\/ $(\XX_{\varrho}, \ZZ)$ defined by
  the primitive constant-length substitution\/ $\varrho$ from
  \eqref{eq:def-gegen}, which has inflation multiplier\/ $2$. Although
  its substitution matrix also has an eigenvalue\/ $\sqrt{2}$, and
  thus satisfies the necessary criterion for the presence of an
  absolutely continuous spectral measure, no such measure exists, and
  all spectral measures are singular.  \qed
\end{coro}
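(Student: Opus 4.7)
The plan is to combine the Lyapunov-exponent analysis from Eq.~\eqref{eq:means} and Table~\ref{tab:num} with Theorem~\ref{thm:1D} to dispose of the absolutely continuous diffraction component, and then to promote this to a statement about all spectral measures via a classical reduction for constant-length substitutions. First I would fix any $N$ for which Table~\ref{tab:num} already gives $m^{\pa}_{N} < \log\sqrt{2}$; the value $N=12$ yields $m^{\pa}_{12} \approx 0.189$, well below $\log\sqrt{2}\approx 0.347$. By \eqref{eq:means}, this bounds $\chi^{B}(k)$ for Lebesgue-a.e.\ $k\in\RR$ by the same number, and with $\varepsilon := \log\sqrt{2} - m^{\pa}_{12} > 0$ the hypothesis of Theorem~\ref{thm:1D} is met, the required a.e.\ invertibility of $B(k)$ having been checked from $\det B(k) = z^4 - 1$. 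Thus $\widehat{\vU}_{\mathsf{ac}} = 0$, and the diffraction measure is singular.

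Next I would pass from diffraction to the full spectral type. The diffraction is, up to normalisation, the spectral measure of the prototile indicator functions acting on $L^2$ of the continuous hull, see \cite{BLvE}, so these lookup functions have singular spectral measure. By \cite[Prop.~7.2]{Q}, for a primitive constant-length substitution the maximal spectral type of $(\XX_{\varrho},\ZZ)$ is already carried by the lookup functions of the level-$m$ supertile at $0$, ranging over all $m\in\NN_0$. For a level-$m$ supertile, the relevant renormalisation involves the tail cocycle $B(2^m k) B(2^{m+1} k) \cdots$; by Lemma~\ref{lem:linear-FT} this amounts to rescaling the spectral argument, and removing the first $m$ factors does not affect the $\limsup$ in \eqref{eq:def-L} (using that $B^{(m)}(k)$ is invertible for a.e.\ $k$, again thanks to $\det B(k)=z^4-1$ having only isolated zeros). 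Hence the same bound $\chi^{B}(k) < \log\sqrt{2}$ applies to the shifted cocycle, and the argument of Theorem~\ref{thm:1D} excludes an \textsf{ac} component for each supertile spectral measure. Since these measures already control the maximal type, no spectral measure can have an \textsf{ac} part.

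The main obstacle is neither the numerical estimate nor the direct application of Theorem~\ref{thm:1D}, but the clean transfer from diffraction to the maximal spectral type. One has to verify that the rescaling $k \mapsto 2^m k$ preserves the Lyapunov exponent almost everywhere and that the lookup-function reduction of \cite{Q} is genuinely available in the present, non-Abelian constant-length setting (the footnote before the statement confirms this explicitly). Once these two points are handled, the remaining work is a uniform reapplication of the Riesz-product/Lyapunov estimate used for the prototile level, and no further substitution-specific input is required.
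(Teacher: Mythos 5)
Your proposal is correct and follows essentially the same route as the paper: the bound $m^{\pa}_{12}<\log\sqrt{2}$ from Eq.~\eqref{eq:means} and Table~\ref{tab:num} combined with Theorem~\ref{thm:1D} to kill the \textsf{ac} diffraction, then the transfer to the maximal spectral type via \cite{BLvE} and \cite[Prop.~7.2]{Q}, with the observation that the supertile lookup functions lead to the same Riesz-product/cocycle structure with only finitely many initial factors removed, hence the same Lyapunov exponents. The only cosmetic difference is that you phrase the supertile step via the shifted cocycle $B(2^m k)B(2^{m+1}k)\cdots$ and a.e.\ invertibility of $B^{(m)}(k)$, which is exactly the rescaling argument the paper invokes through Lemma~\ref{lem:linear-FT}.
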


It follows from the full analysis in \cite{CG} that the extremal
spectral measures are either pure point or singular continuous, and
that both possibilities occur here. Let us briefly mention that
\cite{BGM} presents a method to construct infinitely many other
examples of this kind, which demonstrates that the
$\sqrt{\lambda}\,$-criterion alone is far from sufficient for the
emergence of \textsf{ac} spectral components.

\section{Results in higher dimensions}\label{sec:general}

One advantage of the geometric language with tilings is its
generalisability to higher dimensions. Here, a \emph{tile} in $\RR^d$
is a compact set $\ct$ that is the closure of its interior, and we
will only consider cases where $\ct$ is simply connected, though this
is not required for the general theory. A \emph{prototile} is a
representative of a tile and all its translates under the action of
$\RR^d$.

Given a finite set
$\cT = \{ \ct^{\pa}_{1}, \ldots , \ct^{\pa}_{\nts L} \}$ of $L$
prototiles and an expansive linear map $Q$, one speaks of a
\emph{stone inflation} relative to $Q$ (otherwise often called a
self-affine inflation) if there is a rule how to exactly subdivide
each level-$1$ supertile $Q (\ct_i)$ into translated copies of the
original tiles.  Iterating such an inflation rule, called $\varrho$ as
before, leads to tilings that cover $\RR^d$, and via the orbit closure
in the standard local rubber topology also to a compact hull $\YY$. If
the inflation is primitive, see \cite{TAO,BL-msds,FreRi} for details,
this hull is minimal and consists of a single LI class, which is to
say that any two elements of the hull are LI. It is an interesting and
important fact that this property is not restricted to the FLC
situation, but still holds for more general inflation tilings
\cite{FR}, with the properly adjusted notions of indistinguishability
and repetitivity; see also \cite{LS}.

To keep track of the relative positions of the tiles under the
inflation procedure, we need to equip each $\ct_i$ with a reference or
control point. While there are usually many ways to do so, some will
be more `natural' than others. What really counts is that the tiling
and the point set contain the same information. So, it is imperative
to choose the control points such that they are MLD with the
tiling. When congruent tiles exist, the control points are
\emph{coloured} to distinguish them according to the tile type.  This
means that the space of (coloured) control point sets and the tiling
hull are topologically conjugate as dynamical systems under the
translation action of $\RR^d$ in a \emph{local} way. For this reason,
we usually identify the two pictures, and speak of tilings or point
sets interchangeably, always using $\YY$ to denote the hull.

Now, we can define the displacement sets $T_{ij}$ essentially as 
before, so
\begin{equation}\label{eq:T-def-gen}
    T_{ij} \, = \, \{ \text{all relative positions of } \ct_i \text{ in }
    Q (\ct_j )\} \pts ,
\end{equation}
where the relative positions are defined via the control points. Note
that all quantities are defined in complete analogy to the
one-dimensional case. In particular, the corresponding \emph{Fourier
  matrix} is once again given by
\begin{equation}\label{eq:B-def-gen}
     B (k) \, = \, \overline{ \widehat{ \delta^{\pa}_{T}}} (k) \pts .
\end{equation}
Note that $k \in \RR^d$ reflects the dimension of the Euclidean space
the tiling lives in, while $B (k) \in \Mat (L, \CC)$ covers the
combinatorial structure of the inflation rule. As before,
\[ 
     M \, = \, B(0) 
\]
is the inflation or \emph{incidence} matrix, with leading eigenvalue
$\lambda = \det (Q)$ by construction of the stone inflation. Many
explicit examples are discussed in \cite[Ch.~6]{TAO} as well as in
\cite{Natalie,Nat-review,Dirk}; see also the \textsc{Tilings
  Encyclopedia}.\footnote{The \textsc{Tilings Encyclopedia} is
  maintained by Dirk Frettl\"{o}h and Franz G\"{a}hler, and is
  accessible online at \texttt{http://tilings.math.uni-bielefeld.de}.}
Quite frequently, $Q$ will be a homothety, simply meaning
$Q (x) = \lambda x$ and thus referring to the case of a self-similar
inflation, but it can also contain a rotation (as in G\"{a}hler's
shield tiling; see \cite[Sec.~6.3.2]{TAO}) or scale differently in
different directions (as in general block substitutions; see
Figure~\ref{fig:block} below for an example).  The crucial point here
is that space and combinatorial information are properly separated for
the renormalisation approach.

The renormalisation equations for the pair correlation coefficients
are derived \cite{BGM} by the same arguments used in
Lemma~\ref{lem:ren-eq} above, where the local recognisability in the
aperiodic case follows from \cite{Boris98}.  The result reads
\begin{equation}\label{eq:renogen}
    \nu^{\pa}_{ij} (z) \, = \, \myfrac{1}{\lvert\det(Q)\rvert} 
    \sum_{m,n=1}^{L}
    \,\sum_{r\in T_{im}}  \,\sum_{s\in T_{jn}}
    \nu^{\pa}_{mn} \bigl( Q^{-1} (z+r-s) \bigr),
\end{equation}
which, in terms of the corresponding 
pair correlation measures,  becomes
\[
     \vU \, = \, \myfrac{1}{\lvert\det(Q)\rvert}  \bigl( \pts
     \widetilde{\nts \delta^{\pa}_{T} \nts }
     \overset{*}{\otimes} \delta^{\pa}_{T} \bigr) * 
     ( Q  .\vU ) \pts .
\]
Note that these relations also apply to \emph{periodic} inflation
tilings, as shown in \cite{BGM}.  Taking Fourier transforms, with the
dual map $Q^* := (Q^{T})^{-1}$, we obtain the relations
\begin{equation}
     \widehat{\vU} \, = \, \myfrac{1}{\lvert\det(Q)\rvert^2}
     \left( B (.) \otimes \overline{B (.)} \, \right)
     \bigl( Q^* \! . \pts \widehat{\vU} \pts \bigr) 
\end{equation}
by Lemma~\ref{lem:linear-FT}.  Once again, they have to hold
separately for the pure point, singular continuous and absolutely
continuous components, respectively, as in Lemma~\ref{lem:separate}.

Due to the appearance of $Q^*$, one defines the
Fourier matrix cocycle as
\begin{equation}\label{eq:f-cocycle}
   B^{(n)} (k) \, = \, B(k) \pts B(Q^T k) \pts \cdots 
   B ((Q^T )^{n-1}k) \pts,
\end{equation}
where the transpose can also be seen as a consequence of
Eqs.~\eqref{eq:T-def-gen} and \eqref{eq:B-def-gen} via a simple
calculation with the Fourier transform. Now, as in the one-dimensional
case, one has the following result \cite{BGM}.

\begin{fact}
  If\/ $B(k)$ is the Fourier matrix of the primitive stone inflation
  rule\/ $\varrho$, the Fourier matrix of\/ $\varrho^n$ is given by\/
  $B^{(n)} (k)$ from Eq.~\eqref{eq:f-cocycle}. \qed
\end{fact}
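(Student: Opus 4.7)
The plan is to prove this by induction on $n$, after first establishing the appropriate recursion for the iterated displacement matrix and then converting it into a product identity via Fourier transform.

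The first step would be to derive a recursive description of the displacement matrix $T^{(n)}$ associated with $\varrho^n$. Writing $\varrho^n = \varrho \circ \varrho^{n-1}$ and unfolding one step of the inflation yields the multiset identity
\[
    T^{(n)}_{ij} \, = \, \bigcup_{m=1}^{L} \bigl\{\,
    r + Q(s) \, : \, r \in T_{im},\; s \in T^{(n-1)}_{mj}\,\bigr\} ,
\]
since the level-$n$ supertile $Q^n (\ct_j)$ is obtained by first applying $\varrho^{n-1}$ to $\ct_j$, which places tiles of type $m$ at positions $s \in T^{(n-1)}_{mj}$, and then blowing each such tile up to the level-$1$ supertile $Q(\ct_m)$ at position $Q(s)$, inside which one finds type-$i$ tiles at positions $r \in T_{im}$. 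Passing to the corresponding matrix-valued Dirac combs turns this into
\[
   \delta^{\pa}_{T^{(n)}_{ij}} \, = \, \sum_{m=1}^{L}
   \delta^{\pa}_{T_{im}} * \bigl( Q \pts . \pts
   \delta^{\pa}_{T^{(n-1)}_{mj}} \bigr) ,
\]
which is simply $L \! \times \! L$ matrix multiplication with convolution as the internal product.

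Next, I would take Fourier transforms entry by entry. For a point measure $\mu$, one has the direct identity $\widehat{Q \pts . \pts \mu} (k) = \widehat{\mu} (Q^T k)$, as follows from $\langle k \pts | \pts Qa \rangle = \langle Q^T k \pts | \pts a \rangle$; no Jacobian appears because the Fourier transform of a point measure is a bounded continuous function, not an absolutely continuous measure, so the relevant case of Lemma~\ref{lem:linear-FT} collapses to a plain substitution $k \mapsto Q^T k$. Combined with the convolution theorem applied componentwise, the recursion becomes
\[
    \widehat{\delta^{\pa}_{T^{(n)}}} (k) \, = \,
    \widehat{\delta^{\pa}_{T}} (k) \, \pts
    \widehat{\delta^{\pa}_{T^{(n-1)}}} (Q^T k) ,
\]
read as $L \! \times \! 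L$ matrix multiplication (now with scalar products). Complex conjugating and using the definition \eqref{eq:B-def-gen} yields
\[
    B^{(n)} (k) \, = \, B(k) \, B^{(n-1)} (Q^T k) ,
\]
where $B^{(n)} := \overline{\widehat{\delta^{\pa}_{T^{(n)}}}}$ is, by definition, the Fourier matrix of $\varrho^n$. A straightforward induction then unfolds this into the product in \eqref{eq:f-cocycle}.

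There is no real obstacle here, and the claim is more a bookkeeping identity than a theorem. The only points requiring attention are the order of the matrix factors, which is fixed by the convention that the second index of $T_{ij}$ labels the supertile while the first labels the contained tile (so the outer inflation step contributes the left factor $B(k)$ and the inner iterate the right factor $B^{(n-1)}(Q^T k)$), and the appearance of $Q^T$ rather than $Q$ or the dual $Q^{\pts *}$ on the Fourier side, which is dictated purely by the pairing $\langle \cdot \pts | \pts \cdot \rangle$ used in the definition of the Fourier transform.
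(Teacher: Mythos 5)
Your argument is correct and is essentially the argument the paper has in mind: the paper states this fact without proof (citing the companion paper and remarking that the transpose "can be seen as a consequence of Eqs.~\eqref{eq:T-def-gen} and \eqref{eq:B-def-gen} via a simple calculation with the Fourier transform"), and your induction via the recursion $T^{(n)}_{ij} = \bigcup_m \{\pts r + Q(s) : r \in T_{im},\, s \in T^{(n-1)}_{mj}\}$, followed by the convolution theorem and the observation that $\widehat{Q\pts.\pts\delta^{\pa}_{a}}(k) = \widehat{\delta^{\pa}_{a}}(Q^T k)$ without any Jacobian, is exactly that calculation carried out explicitly, including the correct factor ordering $B^{(n)}(k) = B(k)\pts B^{(n-1)}(Q^T k)$.
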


With $\chi^{B}$ as defined in Eq.~\eqref{eq:def-L}, and in complete
analogy to the one-dimensional case, one can now derive
\cite[Thm.~5.7]{BGM} the following criterion for the absence of
\textsf{ac} diffraction components.

\begin{theorem}\label{thm:higher}
  Consider a finite set\/ $\cT$ of prototiles in\/ $\RR^d$ and a
  primitive stone inflation for\/ $\cT$, with expansive linear map\/
  $Q$, and suppose that this defines an FLC tiling system.  Assume
  further that each prototile is equipped with a control point,
  possibly coloured, such that the tilings and the corresponding
  control point sets are MLD. Define Fourier matrix and Lyapunov
  exponents as explained above.
  
  Suppose that\/ $B(k)$ is invertible for a.e.\ $k\in\RR^d$ and that
  there is some\/ $\varepsilon > 0$ such that\/
  $\chi^{B} (k) \leqslant \frac{1}{2} \log \pts \lvert \det (Q)\rvert
  - \varepsilon$
  holds for a.e.\ $k\in \RR^d$. Then, one has\/
  $\widehat{\vU}_{\mathsf{ac}} =0$ and the diffraction measure of the
  tiling system is singular.  \qed
\end{theorem}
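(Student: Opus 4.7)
The plan is to mirror the sketch of Theorem~\ref{thm:1D}, now with the expansive linear map $Q$ and its dual $Q^T$ in place of the scalar dilation $\lambda$. First I would separate the Fourier-transformed renormalisation equation by spectral type, in direct analogy with Lemma~\ref{lem:separate}: the factor $B(.)\otimes \overline{B(.)}$ is analytic in $k$ and the pushforward under the linear bijection $Q^*$ cannot change the spectral type either, so the $\mathsf{ac}$ component satisfies on its own
\[
   \widehat{\vU}_{\mathsf{ac}} \, = \, \myfrac{1}{\lvert\det(Q)\rvert^2}
   \bigl( B(.) \otimes \overline{B(.)} \bigr)
   \bigl( Q^* \nts . \pts \widehat{\vU}_{\mathsf{ac}} \bigr) .
\]
Writing $h$ for the Radon--Nikodym density vector of $\widehat{\vU}_{\mathsf{ac}}$ and applying Eq.~\eqref{eq:forward-ac} with $A = Q^*$, so that $Q^* \nts . h = \lvert\det(Q)\rvert\, h\circ Q^T$, this translates into the pointwise identity
\[
   h(k) \, = \, \myfrac{1}{\lvert \det(Q) \rvert}
   \bigl( B(k) \otimes \overline{B(k)} \bigr)\, h (Q^T k)
\]
for Lebesgue-a.e.\ $k\in\RR^d$.

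Next I would exploit the matrix structure. At a.e.\ $k$, viewing $\bigl(h_{ij}(k)\bigr)$ as a positive semi-definite Hermitian $L\times L$ matrix, Fact~\ref{fact:decompose} gives a pointwise decomposition into at most $L$ rank-$1$ contributions $v_r(k)\, v_r(k)^{\ast}$. The Kronecker form of the iteration factorises over such summands, so it suffices to control a single rank-$1$ piece, which satisfies
\[
   v(k) \, = \, \myfrac{1}{\mbox{\small $\sqrt{\lvert \det(Q) \rvert}$}}
   \, B(k) \, v(Q^T k) .
\]
Iterating and using that $B(k)$ is invertible for a.e.\ $k$ (a property preserved under the non-singular $Q^T$), I would rearrange to
\[
   v\bigl((Q^T)^n k\bigr) \, = \, \lvert \det(Q) \rvert^{n/2}
   \bigl( B^{(n)}(k) \bigr)^{-1} v(k)
\]
along a.e.\ $Q^T$-orbit.

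The Lyapunov hypothesis then enters directly. For a.e.\ $k$ and any $\delta>0$, the definition of $\chi^B$ gives $\| B^{(n)}(k) \| \leqslant \ee^{\pts n (\chi^B(k) + \delta)}$ eventually, so the operator-norm bound $\|(B^{(n)})^{-1} v\| \geqslant \|v\|/\|B^{(n)}\|$ combined with the assumption $\chi^B(k) \leqslant \tfrac{1}{2}\log\lvert\det(Q)\rvert - \varepsilon$ yields
\[
   \bigl\| v\bigl((Q^T)^n k\bigr) \bigr\| \, \geqslant \,
   \ee^{\pts n (\varepsilon - \delta)} \, \| v (k) \|
\]
for any $\delta < \varepsilon$, which is exponential growth wherever $v(k) \neq 0$.

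The hard part will be converting this a.e.\ pointwise blow-up into a clean contradiction with the translation-boundedness of $\widehat{\vU}_{\mathsf{ac}}$, which only controls \emph{local} $L^1$-averages of $h$ and not its pointwise values. My plan here is to follow the measure-theoretic route from \cite[Sec.~6.7 and App.~B]{BFGR} and its higher-dimensional upgrade in \cite{BGM}: one selects a $Q^T$-invariant representative of the exceptional null set, in the spirit of the invariant $\cE^{\pa}_{\alpha}$ used for Lemma~\ref{lem:separate}, and then argues on its complement that $v \not\equiv 0$ on a set of positive measure would force arbitrarily large $L^1$-norms of $h$ on translates of a fixed compact window along the orbit, contradicting translation-boundedness. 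This forces $h \equiv 0$ a.e., whence $\widehat{\vU}_{\mathsf{ac}} = 0$; singularity of the diffraction measure follows, since the latter is a fixed linear combination of the components of $\widehat{\vU}$.
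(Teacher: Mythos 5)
Your proposal is correct and follows essentially the same route as the paper: the paper proves Theorem~\ref{thm:higher} by exactly this transplantation of the one-dimensional argument of Theorem~\ref{thm:1D} (spectral-type separation, the density relation $h(k)=\lvert\det Q\rvert^{-1}\bigl(B(k)\otimes\overline{B(k)}\bigr)h(Q^Tk)$, the rank-$1$ reduction via Fact~\ref{fact:decompose}, and exponential growth along $Q^T$-orbits contradicting translation-boundedness), deferring the detailed measure-theoretic step to \cite[Thm.~5.7]{BGM}. Your honest deferral of that same final step to \cite{BFGR,BGM} matches the level of detail the paper itself provides, and your bookkeeping of the $\lvert\det Q\rvert$ factors and of the dual map $Q^*$ versus $Q^T$ is accurate.
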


\begin{remark}
  A closer inspection of the proof in \cite{BGM} reveals that the FLC
  condition is actually not necessary. Indeed, if one starts form a
  stone inflation with finitely many prototiles up to translations,
  the criterion from Theorem~\ref{thm:higher} works without further
  modifications.  We shall see several examples later on.  \exend
\end{remark}

Let us note in passing that the comments of Remark~\ref{rem:more},
with the obvious adjustments, apply to this higher-dimensional case as
well. In particular, the conditions for the appearance of \textsf{ac}
spectral components in higher dimensions are as restrictive as in one
dimension.

\section{Binary block substitutions of constant 
size}\label{sec:constant}

An interesting class is provided by primitive, binary block
substitutions in $d$ dimensions, where we have two types of unit
blocks, white ($0$) and black ($1$) say, which are both substituted
into a block of equal size and shape. Here, we assume the
corresponding linear expansion to be
$Q = \diag (n^{\pa}_{1}, \ldots , n^{\pa}_{d} )$ with all
$n_i \geqslant 2$, so $Q = Q^T$ in this case.

Let us now place the inflated white block on top of the inflated black
one (in $\RR^{d+1}$ that is), so that one can easily identify
bijective and coincident positions via the corresponding columns.  We
cast them into polynomials as follows. Let $p$ be the polynomial in
$z = (z^{\pa}_{1}, \ldots , z^{\pa}_{d})$ for \emph{all} positions,
which means
$p (z) = \prod_{j=1}^{d} (1 + z^{\pa}_j + \ldots + z^{n_j - 1}_j )$.
Likewise, $q$ and $r$ are the polynomials for bijective columns of
type $\left[\begin{smallmatrix} 0 \\ 1 \end{smallmatrix}\right]$ and
$\left[\begin{smallmatrix} 1 \\ 0 \end{smallmatrix}\right]$, while
$s^{\pa}_0$ and $s^{\pa}_1$ stand for the polynomials of the
coincident columns of type
$\left[\begin{smallmatrix} 0 \\ 0 \end{smallmatrix}\right]$ and
$\left[\begin{smallmatrix} 1 \\ 1 \end{smallmatrix}\right]$,
respectively. Clearly, one has $q+r+s^{\pa}_0 +s^{\pa}_1 = p$.  Then,
with $k = (k^{\pa}_{1}, \ldots , k^{\pa}_{d})\in \RR^d$, the Fourier
matrix has the form
\[
     B (k) \, = \, \begin{pmatrix}
     q (z) + s^{\pa}_0 (z) & r (z) + s^{\pa}_0 (z) \\
     r (z)  + s^{\pa}_1 (z) & q (z) + s^{\pa}_1 (z)
     \end{pmatrix}  \quad \text{with} \quad
     z_j = \ee^{2 \pi \ii k_j} .
\]
Since $\det \bigl( B (k)\bigr) = p (z) \bigl( q (z) - r (z) \bigr)$,
the matrix $B (k)$ is invertible for a.e.\ $k\in\RR^d$; see
\cite{squiral} for more on bijective block substitutions and
\cite{Natalie} for some general results.

The Fourier matrix cocycle belongs to the compact dynamical system
defined by $k \mapsto Q k$ modulo $1$ on $\TT^d$.  In this situation,
we may use Oseledec's multiplicative ergodic theorem \cite{Viana}, 
which tells us that the two Lyapunov exponents exist 
for a.e.\ $k\in\RR^d$ and add up to
\[
   \lim_{N\to\infty} \myfrac{1}{N} \sum_{\ell=0}^{N-1}
   \log \pts \bigl| \det \bigl( B (Q^{\ell} k) \bigr) \bigr|
\]
whenever this limit exists, which is true for a.e.\ $k\in\RR^d$ by
Lemma~\ref{lem:Q-sample}. The limit then is
\[
   \int_{\TT^d} \log \pts \bigl| \det 
   \bigl( B (k)\bigr) \bigr|
   \dd k \, = \, \fm (p) + \fm(q-r) \, = \,
   \fm (q-r) \pts ,
\]
because  $\fm (p) = 0$ by Fact~\ref{fact:cyclo}.

Since $(1,1)$ is a left eigenvector of $B(k)$ for all $k\in\RR^d$,
with eigenvalue $p ( z^{\pa}_{1} , \ldots, z^{\pa}_{d} )$ and the $z_j$
from above, one obtains
\[
   \myfrac{1}{N} \log \pts \big \| (1,1) B^{(N)} (k) \big \|
   \, \xrightarrow{\, N\to\infty\,} \, \fm (p) \, = \, 
   \sum_{j=1}^{d} \fm \bigl( 1 + z^{\pa}_{j} + \ldots +
   z^{n_j - 1}_{j} \bigr) \, = \, 0
\]
for a.e.\ $k\in\RR^d$. Since we thus know one exponent together
with the sum, we get that
\[
    \chi^{B} (k) \, = \, \chi^{B}_{\max} (k) \, = \, \fm (q-r)
\]
holds for a.e.\ $k\in\RR^d$. By standard estimates 
\cite{Neil,BaGriMa,BGM}, one now finds
\[
   \exp \bigl( \fm (q-r)\bigr) \, < \,
   \| q-r \|^{\pa}_{1} \, \leqslant \, \| q-r \|^{\pa}_{2}
   \, = \, \sqrt{ \det (Q) - n_{\mathrm{c}}} 
   \, \leqslant \, \sqrt{ \det (Q)} \, ,
\]
where the first step follows from Jensen's inequality; see \cite{LL}
for a suitable formulation.  Moreover, with $n_{\mathrm{c}}$ denoting
the total number of coincident columns, the equality is a result of
Parseval's identity. Together, we get
$\fm (q-r) < \log \sqrt{\det(Q)}$, which gives the required criterion
for the absence of absolutely continuous components in
$\widehat{\vU}$.

Clearly, the corresponding property also holds in higher dimensions,
and we have the following general result; see also \cite{BGM}.

\begin{theorem}
  The diffraction measure of a primitive binary block substitution of
  constant size in dimension\/ $d \geqslant 2$ is always
  singular. \qed
\end{theorem}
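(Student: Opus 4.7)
The plan is to verify the hypotheses of Theorem~\ref{thm:higher} for an arbitrary primitive binary block substitution in dimension $d\geqslant 2$, by directly extending the computation that has just been carried out for $d=2$. All the ingredients are already in place, so the argument is essentially a bookkeeping exercise, with one small subtlety at the strict-inequality step.

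First, I would set up exactly the same Fourier matrix
\[
     B (k) \, = \, \begin{pmatrix}
     q (z) + s^{\pa}_0 (z) & r (z) + s^{\pa}_0 (z) \\
     r (z)  + s^{\pa}_1 (z) & q (z) + s^{\pa}_1 (z)
     \end{pmatrix}
\]
for $k\in\RR^d$ and $z_j = \ee^{2\pi \ii k_j}$, where $p,q,r,s^{\pa}_0,s^{\pa}_1$ are the column-position polynomials defined above. The identity $\det B(k) = p(z)\bigl(q(z) - r(z)\bigr)$ still holds, so $B(k)$ is invertible off a trigonometric-polynomial zero set, hence for a.e.\ $k\in\RR^d$. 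Next, the row vector $(1,1)$ remains a left eigenvector with eigenvalue $p(z)$, and since $p$ is a product of one-variable polynomials of the form $1+z^{\pa}_j + \cdots + z^{n_j-1}_j$, Fact~\ref{fact:cyclo} gives $\fm(p) = 0$.

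For the Lyapunov analysis, the cocycle lives over the toral endomorphism $k\mapsto Q^T k$ mod $\ZZ^d$ on $\TT^d$, which is expansive because all $n_i \geqslant 2$. Thus Lemma~\ref{lem:Q-sample} applies to $\log\lvert\det B(\cdot)\rvert$, and Oseledec's multiplicative ergodic theorem gives that the two Lyapunov exponents exist a.e., with
\[
    \chi^{B}_{1}(k) + \chi^{B}_{2}(k) \, = \int_{\TT^d}
    \log\lvert \det B(k) \rvert \dd k \, = \, \fm(p) + \fm(q-r) \, = \, \fm(q-r) \pts .
\]
The $(1,1)$-direction contributes the exponent $\fm(p) = 0$, so $\chi^{B}(k) = \fm(q-r)$ for a.e.\ $k\in\RR^d$.

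It remains to show $\fm(q-r) < \tfrac{1}{2}\log\det Q$, with a uniform gap $\varepsilon>0$. If $q-r$ is not a monomial, Jensen's inequality for Mahler measure gives the strict bound $\exp(\fm(q-r)) < \|q-r\|^{\pa}_1$, while Cauchy--Schwarz together with Parseval yields $\|q-r\|^{\pa}_1 \leqslant \|q-r\|^{\pa}_2 = \sqrt{\det Q - n^{\pa}_{\mathrm c}} \leqslant \sqrt{\det Q}$; if $q-r$ happens to be a monomial, then $\fm(q-r)=0 < \tfrac{1}{2}\log\det Q$ holds trivially because $\det Q \geqslant 4$ for $d\geqslant 2$. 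Either way we obtain some $\varepsilon>0$ with $\chi^{B}(k) \leqslant \tfrac{1}{2}\log\det Q - \varepsilon$ a.e., and Theorem~\ref{thm:higher} then yields $\widehat{\vU}_{\mathsf{ac}} = 0$, so the diffraction measure is singular. The main obstacle, minor as it is, is precisely this dichotomy: one needs to notice that the Jensen step might fail to be strict (when $q-r$ is a single monomial), in which case strictness has to be recovered from the trivial bound $\det Q \geqslant 2^d > 1$.
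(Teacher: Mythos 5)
Your proposal follows essentially the same route as the paper's own argument in Section~\ref{sec:constant}, which is already set up for general $d$: the same $2\times 2$ Fourier matrix with $\det B(k)=p\,(q-r)$, the left eigenvector $(1,1)$ giving the exponent $\fm(p)=0$, Oseledec together with Lemma~\ref{lem:Q-sample} to identify the sum of exponents with $\fm(q-r)$, and the Jensen--Parseval chain $\exp(\fm(q-r)) \leqslant \|q-r\|^{\pa}_1 \leqslant \sqrt{\det(Q)-n_{\mathrm{c}}}$ feeding into Theorem~\ref{thm:higher}. The only point where you go beyond the paper is the explicit dichotomy at the Jensen step (strictness fails when $q-r$ is a single monomial, where one instead uses $\fm(q-r)=0<\tfrac12\log\det Q$), which is a small but legitimate refinement of the same proof rather than a different approach.
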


\begin{figure}
\begin{center}
  \includegraphics[width=0.7\textwidth]{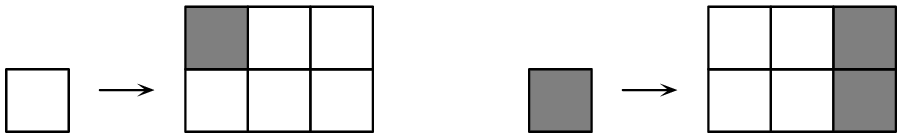}
\end{center}
\caption{A simple example of a primitive binary block substitution in
  the plane. It is a stone inflation for the linear map
  $Q = \diag (3,2)$.\label{fig:block} The lower left corners of all
  blocks are used as control points (not shown), which are coloured
  according to the block type.}
\end{figure}

\begin{example}
  For the block substitution of Figure~\ref{fig:block}, the linear
  expansion is $Q = \diag (3,2)$. With
  $(z^{\pa}_{1} , z^{\pa}_{2} ) = (x,y)$, the polynomials are
  $p (x,y) = (1+x+x^2)(1+y)$ together with $q(x,y) = x^2 (1+y)$,
  $r(x,y) = y$, $s^{\pa}_{0} (x,y) = 1 + x (1+y)$ and
  $s^{\pa}_{1} (x,y) = 0$. This gives
\[
    B (k) \, = \, \begin{pmatrix}
    1 + (x + x^2)(1 + y) & (1+x)(1+y) \\ y &
    x^2 (1+y) \end{pmatrix}
    \quad \text{with} \quad  (x,y) = \bigl(
    \ee^{2 \pi \ii k_1}, \ee^{2 \pi \ii k_2} \bigr)
\]
and $\det \bigl( B(k) \bigr) = p(x,y) ( x^2 + x^2 y - y)$. The
existence of the various limits of Birkhoff sums we need here, for
a.e.\ $k\in\RR^2$, follows once again from Lemma~\ref{lem:Q-sample}.

The non-zero Lyapunov exponent is given by
\[
\begin{split}
     \fm (x^2 + x^2 y - y) \, & = \, \fm (x^2 + x^2 y + y)
     \, = \, \fm (1 + x^{-2} y + y ) \, = \, \fm (1 + x + y ) \\
     & = \, \myfrac{3 \sqrt{3}}{4 \pts \pi} \, L (2, \chi^{\pa}_{-3} )
     \, = \, 2 \int_{0}^{1/3} \! \log \bigl( 2 \cos (\pi t) \bigr) \dd t
     \, \approx \, 0.323 {\pts} 066 \pts ,
\end{split}     
\]
where $L (z,\chi^{\pa}_{-3})$ is the $L$-function for the principal
Dirichlet character $\chi^{\pa}_{-3}$ of the imaginary quadratic field
$\QQ \bigl(\sqrt{-3}\pts\pts \bigr)$. Here, the first equality follows
from a change of variable transformation, while the third emerges by a
standard formula from \cite{EvWa}. The connection between logarithmic
Mahler measures and special values of $L$-functions is a famous result
that first appeared in \cite{Wannier} as the groundstate
entropy\footnote{It is interesting historically that this connection
  was overlooked for a long time because the numerical value given
  there (for the correct integral) was erroneous, which was corrected
  in an erratum 23 years later.}  of the anti-ferromagnetic Ising
model on the triangular lattice; see \cite{Klaus,BCM} and references
therein for more.  \exend
\end{example}

\begin{remark}
  The absence of \textsf{ac} diffraction immediately implies that the
  spectral measure for the `one-point lookup function' must be
  singular. As in the one-dimensional case, this implies that the
  spectral measure is also singular for any function that looks up the
  level-$n$ supertile at the origin; see the discussion before
  Corollary~\ref{coro:spec-singular}.  Then, by \cite{Bart},
  \emph{any} spectral measure must be singular, and our system has
  singular dynamical spectrum.  This gives another, independent proof
  of a result that was previously shown in \cite{Natalie,squiral}; see
  also \cite{Nat-review}.  \exend
\end{remark}

\section{Block substitutions with squares}\label{sec:block}

Here, we are interested in inflation rules with a single prototile of
unit area and linear expansion $Q$, but some added complexity from the
set $S$ of relative positions of the tiles within the supertile. In
particular, this will be our first class of examples where we go
beyond the FLC case. The special interest in this class originates
from the fact that the Fourier matrix cocycle, $B^{(n)} (k)$, simply
is a sequence of multivariate trigonometric polynomials. We thus write
$P^{(n)} (k)$ to indicate this. The renormalisation equation becomes
an equation directly for the autocorrelation and reads
\begin{equation}\label{eq:reno-simple}
     \gamma \, = \, \nu * (f.\gamma)
     \quad \text{with} \quad 
     \nu \, = \, \myfrac{\delta^{\pa}_{S} * 
     {\delta^{\pa}_{-S}}}{\lvert \det (Q) \rvert} \pts .
\end{equation}
Let us begin with a particularly
simple case.

\begin{example}\label{ex:square-1}
  The staggered block substitution defined by 
\[
   \raisebox{-25pt}{\includegraphics[width=0.3\textwidth]{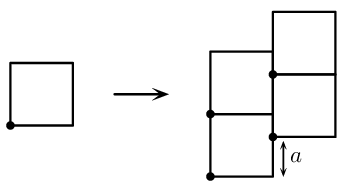}}
\]
with arbitrary $a\in\RR$ results in a tiling that is lattice-periodic,
with lattice $\vG = \langle v, e^{\pa}_{2} \rangle^{\pa}_{\ZZ}$, where
$v = e^{\pa}_{1} + a \pts e^{\pa}_{2}$. In particular, the resulting
tiling is $1$-periodic in $e^{\pa}_{2}$-direction. Since
$\dens (\vG) =1$, the autocorrelation is
$\gamma = \delta^{\pa}_{\vG}$, where we use a reference point in the
lower left corner of every unit square as indicated.  The diffraction
measure is $\widehat{\gamma} = \delta^{\pa}_{\vG^*}$ with the dual
lattice
$\vG^* = \langle e^{\pa}_{1} , e^{\pa}_{2} - a \pts e^{\pa}_{1}
\rangle^{\pa}_{\ZZ}$.

Let us look at this result from the renormalisation point of view,
which is also applicable here because the inflation can easily be
changed into a stone inflation without changing the control point
positions; see Remark~\ref{rem:stone} below. As mentioned above, the
approach also works for periodic cases, like the one at hand.  By an
iteration of the renormalisation equation \eqref{eq:reno-simple} and
an application of Lemma~\ref{lem:bei-null}, the autocorrelation is
\[
   \gamma \, = \Conv_{m\geqslant 0}
   f^m \! . \pts \nu 
   \quad \text{with} \quad
    \nu \, = \, \bigl( \delta^{\pa}_{0} + \tfrac{1}{2} (
    \delta^{\pa}_{e^{\pa}_{2}} + \delta^{\pa}_{-e^{\pa}_{2}} ) \bigr)*
   \bigl( \delta^{\pa}_{0} + \tfrac{1}{2}(\delta^{\pa}_{v} + 
   \delta^{\pa}_{-v} )\bigr) ,
\]  
  where $f (x) = 2 \pts x$. Now, Fourier transform leads to the 
  two-dimensional Riesz product
\[
    \widehat{\gamma} \, = \prod_{m\geqslant 0} \bigl( 
    1 + \cos (2 \pi \pts 2^m (k^{\pa}_{1} + a \pts k^{\pa}_{2})) \bigr)
    \bigl( 1 + \cos (2 \pi \pts 2^m k^{\pa}_{2})\bigr)
    \, = \sum_{\ell \in \ZZ} \widehat{\gamma}^{(1)}_{\ell} 
     \nts \nts \times \delta^{(2)}_{\ell} 
\]  
with
$\widehat{\gamma}^{(1)}_{\ell} \! = \prod_{m\geqslant 0} \bigl( 1 +
\cos (2 \pi \pts 2^m ( k^{\pa}_{1} + a \pts \ell )) \bigr)$
and $ k = (k^{\pa}_1 , k^{\pa}_2 )$.  Here, we have adopted the
standard notation for product distributions or measures, where the
upper index refers to the two coordinate directions.  Clearly, one has
$\widehat{\gamma}^{(1)}_{0} = \delta^{(1)}_{\ZZ}$ by
Lemma~\ref{lem:2-Riesz}.  Moreover, the distribution
$\widehat{\gamma}^{(1)}_{\ell}$ is $1$-periodic for every
$\ell\in\ZZ$, which means that $\widehat{\gamma}$ is $1$-periodic in
$e^{\pa}_{1}$-direction, independently of $a$.

Whenever $a\in\ZZ$, one finds
$\widehat{\gamma}^{(1)}_{\ell} \! = \delta^{(1)}_{\ZZ}$ by
Lemma~\ref{lem:2-Riesz}, and hence
$\widehat{\gamma} = \delta^{\pa}_{\ZZ^2}$ as required.  More
generally, given $k^{\pa}_{2}=\ell$, the only contribution to
$\widehat{\gamma}^{(1)}_{\ell}$ emerges for
$k^{\pa}_{1} + a \pts \ell = r \in \ZZ$, hence for
$k = r \pts e^{\pa}_{1} + \ell ( e^{\pa}_{2} - a \pts e^{\pa}_{1}) \in
\vG^*$,
which means that the Riesz product representation also gives
$\widehat{\gamma} = \delta^{\pa}_{\vG^*}$, as it must.

Looking at this result from the cocycle point of view gives
$P (k) = (1 + y) ( 1 + x \pts y^a) $ with $x = \ee^{2 \pi \ii k_1}$
and $y = \ee^{2 \pi \ii k_2}$, so that
\[
     \chi^{P} (k) \, :=  \lim_{n\to\infty}  \myfrac{1}{n}
     \sum_{\ell=0}^{n-1} \log \lvert P (2^{\ell} k)\rvert
     \, = \, \MM \bigl( \log \pts \lvert P\rvert \bigr) 
\] 
holds for a.e.\ $k\in\RR^2$ by Lemma~\ref{lem:gen-Sobol},
where one observes that $\log \pts \lvert P \rvert = \frac{1}{2}
\log \pts \lvert P \rvert^2$ with $\lvert P \rvert^2$ satisfying the
required conditions. Now, the mean can be calculated as
\[    
     \MM \bigl( \log \pts \lvert P\rvert \bigr) 
     \, = \, \fm^{\pa}_{y} (1+y)  \, +
     \lim_{T \to\infty} \myfrac{1}{T} \int_{0}^{T}
     \! \fm^{\pa}_{x} (1 + x \pts y^a) \dd k^{\pa}_{2} \, = \, 0 \pts ,
\]
where both Mahler measures are zero because all roots of the
polynomials lie on the unit circle. This fits with the explicit
calculation of $\widehat{\gamma}$ from above.  \exend
\end{example}

\begin{remark}\label{rem:stone}
  By standard methods, which are explained in \cite{TAO} and in
  \cite{Dirk}, one can replace the square in Example~\ref{ex:square-1}
  with a new prototile so that the inflation rule is turned into a
  stone inflation.  This has no effect on the position of the control
  point, wherefore we continue with the simpler formulation as a block
  stubstitution.

  In the same vein, one can see that our approach also works for more
  general inflation rules, certainly as long as they are MLD with a
  stone inflation. For examples of such rules and their reduction to
  stone inflations, we refer to \cite[Ch.~5]{TAO} and \cite{Dirk}.
  \exend
\end{remark}

Extending this initial example, we may consider a block substitution
with $M$ columns of $N$ blocks each, where entire columns can be
shifted in vertical direction by an arbitrary amount, as indicated in
the next diagram,
\begin{equation}\label{eq:genstag}
   \raisebox{-45pt}{\includegraphics[width=0.41\textwidth]{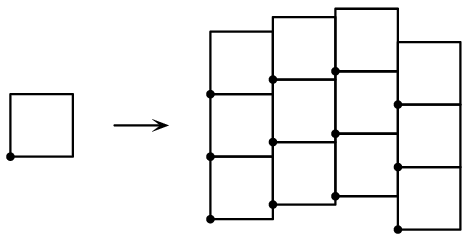}}
\end{equation}
When using the lower left corner as reference point for each square,
it is clear that a modification into a stone inflation according to
Remark~\ref{rem:stone} does not change the resulting point set. For
this reason, we stick to the formulation with squares for simplicity.

Let us now assume that the $i$th column is shifted by $a_i \in\RR$ in
vertical direction, with $i \in \{ 0,1,\ldots,M-1\}$.  Since
$a^{\pa}_{0}$ only results in a global shift of the entire block, we set
$a^{\pa}_{0} = 0$ without loss of generality, and consider the remaining
$a_i$ as shifts relative to column $0$.

As in the previous case, which had the FLC property, we define the
hull as the orbit closure of a fixed point tiling, where the closure
is now taken with respect to the \emph{local rubber topology}
\cite{BL}.  This defines a compact tiling space \cite{FR}, without any
change in the FLC case. However, this slight modification takes care
of the potential occurrence of a tiling with infinite local
complexity. As before, we obtain a dynamical system, under the
continuous translation action of $\RR^2$, which is uniquely ergodic;
see also \cite{LS}. Each tiling in the hull is $1$-periodic in the
$e^{\pa}_{2}$-direction.

From the set $S$ of relative displacements, one finds
\[
    P(k) \, = \, \overline{\widehat{\delta^{\pa}_{S}}} (k)
    \, = \, (1 + y + \ldots + y^{N-1})
    \bigl(1 + x y^{a^{\pa}_1} + x^2 y^{a^{\pa}_{2}} + \ldots +
    x^{M-1} y^{a^{\pa}_{M-1}} \bigr)
\]
with $x=\ee^{2 \pi \ii k_{1}}$ and $y=\ee^{2 \pi \ii k_{2}}$. Note
that the trigonometric polynomial $P$ is quasiperiodic, but
$1$-periodic in $k_1$. Now, with $Q = \diag (M,N)$, the cocycle is
defined by
\[
    P^{(n)} (k) \, = \, P (k) \pts P(Q k) \cdots
    P( Q^{n-1} k )  \pts ,
\]
with $P^{(1)} = P$ as usual. Our Lyapunov exponent can be
calculated as follows,
\[
    \chi^P (k) \, =  \lim_{n\to\infty} \myfrac{1}{n}
    \log \big| P^{(n)} (k) \big| \, =  \lim_{n\to\infty}
    \myfrac{1}{n} \sum_{\ell=0}^{n-1} \log \big|
    P (Q^{\ell} k) \big| .
\]
By an obvious variant of Lemma~\ref{lem:gen-Sobol}, where $\alpha$ is
replaced by the expansion $Q$, the limit exists for a.e.\ $k\in\RR^2$
and is given by
\[
\begin{split}
   \MM \bigl( \log \pts \lvert \nts P\rvert \bigr) \, & 
   = \lim_{T\to\infty}
   \myfrac{1}{T} \int_{0}^{T} \int_{0}^{1}
   \log \big| P(k) \big| \dd k^{\pa}_1 \dd k^{\pa}_2 \\
  & = \fm^{\pa}_{y} \bigl( 1 + y + \ldots + y^{N-1} \bigr) \, + 
   \lim_{T\to\infty} \myfrac{1}{T} \int_{0}^{T} \!
    \fm^{\pa}_{x} \bigl(1 + x y^{a^{\pa}_1}  + \ldots +
    x^{M-1} y^{a^{\pa}_{M-1}} \bigr)  \dd k^{\pa}_{2}  \pts .
\end{split}
\]
As $1+y+ \ldots + y^{N-1}$ is cyclotomic, the first term vanishes. The
integrand in the second is the logarithmic Mahler measure of a
polynomial (in $x$) with all coefficients on the unit circle, which is
known as a \emph{unimodular polynomial}. By an application of Jensen's
inequality in conjunction with Parseval's equation, one can show that
its logarithmic Mahler measure is bounded by $\log \sqrt{M}$ for every
$k^{\pa}_{2} \in \RR$. Consequently, we have
\[
   \MM \bigl( \log \pts \lvert \nts P\rvert \bigr) \, 
   \leqslant \, \log \sqrt{M} \, < \,
   \log \sqrt{MN}
\]
because $N>1$ by assumption. This implies that we cannot have any
absolutely continuous diffraction, and $\widehat{\gamma}$ must be
singular.

The remarkable aspect of this simple class of examples is that the
tilings are generally \emph{not} FLC; compare \cite{Dirk} and
references therein. One can say a bit more about the explicit
structure of the diffraction measure. First of all, it is $1$-periodic
in $e^{\pa}_{1}$-direction, and it consists of parallel arrangements
of one-dimensional layers, distinct in general, which have their own
Riesz product representation. We leave further details to the
interested reader.

\begin{remark}
  The above results can be generalised to $\RR^{d+1}$ with
  $d\geqslant 1$ as follows.  Consider a block of
  $M^{\pa}_{1} \times \cdots \times M^{\pa}_{d} \times N$ cubes, with
  each $M_i \geqslant 2$ and $N\geqslant 2$. Now, modify this block as
  an arrangement of $M^{\pa}_{1} \times \cdots \times M^{\pa}_{d}$
  columns of $N$ cubes each, where column
  $(m^{\pa}_{1}, \ldots, m^{\pa}_{d})$ is shifted by an arbitrary real
  number $a^{\pa}_{m^{\pa}_{1} , \ldots , m^{\pa}_{d}}$ in
  $e^{\pa}_{d+1}$-direction. We may set $a^{\pa}_{0,\ldots, 0} =0$
  without loss of generality. With
  $Q = \diag (M^{\pa}_{1}, \ldots , M^{\pa}_{d},N)$, one can now
  repeat the above analysis. Here, one obtains tilings of $\RR^{d+1}$
  that are $1$-periodic in $e^{\pa}_{d+1}$-direction.
   
  Writing
  $z = (z^{\pa}_{1}, \ldots , z^{\pa}_{d}, z^{\pa}_{d+1}) = \bigl(
  \ee^{2 \pi \ii k_1}, \ldots , \ee^{2 \pi \ii k_d}, \ee^{2 \pi \ii
    k_{d+1}}\bigr) = (x^{\pa}_{1}, \ldots , x^{\pa}_{d}, y)$,
  one finds
  \[
  \begin{split}
       P (k) \, & = \, \bigl( 1 + y + \ldots + y^{N-1} 
       \bigr) R(k)        \quad \text{with} \\[1mm]
       R (k) \, & = \sum_{m^{\pa}_{1} = 0}^{M_{1} -1} \cdots
       \sum_{m^{\pa}_{d} = 0}^{M_{d} - 1}
       x^{m_1}_{1} \nts \cdots \pts x^{m_d}_{d} \, 
       y^{a^{\pa}_{m_1, \ldots , m_d}} ,
  \end{split}
  \]
  where $R$, and hence also $P$, is $1$-periodic in $e_i$-direction 
  for all $1\leqslant i \leqslant d$. The maximal Lyapunov exponent,
  for a.e.\ $k\in\RR^{d+1}$, is now given by
\[
\begin{split}
     \chi^{B} (k) \, & = \lim_{T\to\infty} \myfrac{1}{T} \int_{0}^{T} \!
     \fm^{\pa}_{x} (R) \dd k^{\pa}_{d+1} \, \leqslant \, 
     \log \sqrt{M_{1} \cdots M_{d}\,} \\[1mm]
     & = \, \myfrac{1}{2} \sum_{i=1}^{d} \log (M_i)
     \, < \, \myfrac{1}{2} \log \bigl( \det (Q)\bigr) ,
\end{split}
\]  
 where the last estimate is a consequence of $N\geqslant 2$, while the
 intermediate steps work in complete analogy to our above treatment
 for $d=1$. The conclusion is, once again, the absence of absolutely
 continuous diffraction.
   \exend
\end{remark}

Obviously, one can extend this class of examples by colouring
blocks. This will lead to higher-dimensional Fourier matrices again,
with an uncoloured tiling of the above type as a factor system.  We
leave further details to the interested reader, and turn to a perhaps
more interesting non-FLC example.

\section{The Frank--Robinson tiling}\label{sec:FR}

Let us take a closer look at the tiling dynamical system defined by
the stone inflation\smallskip 
\begin{equation}\label{eq:FR-rule}
    \raisebox{-23pt}{\includegraphics[width=0.9\textwidth]{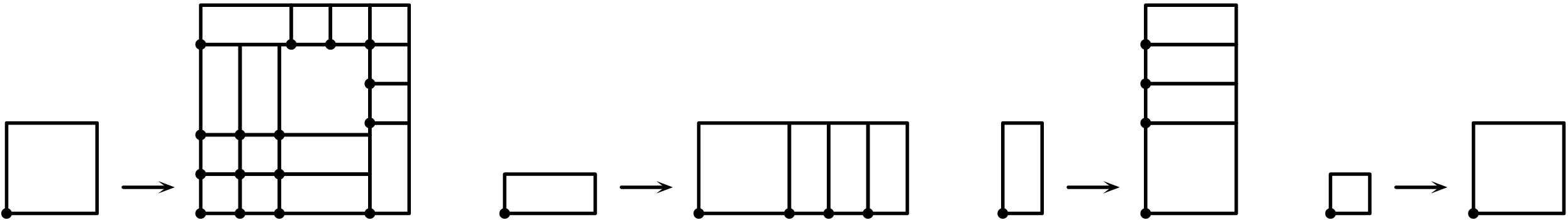}}\smallskip
\end{equation}
where the short edge has length $1$ and the long one length
$\lambda = \frac{1}{2} \bigl( 1 + \sqrt{13}\,\bigr) \approx 2.303$,
while the linear expansion is $Q = \lambda \pts \one^{\pa}_2$.  This
inflation defines the Frank--Robinson tiling \cite{FR}, see also
\cite{primer}, a patch of which is shown in Figure~\ref{fig:FR}.  Note
that the algebraic integer $\lambda$ is neither a PV number nor a
unit. By standard PF theory, the relative prototile frequencies in any
Frank--Robinson tiling are given by
\begin{equation}\label{eq:FRfreq}
   (\nu^{\pa}_{1}, \ldots , \nu^{\pa}_{4}) \, = \,
    \myfrac{1}{9} (4-\lambda, 4 \lambda -7, 
    4 \lambda -7, 19-7\lambda) \pts ;
\end{equation}
see \cite[Ex.~5.8]{TAO} for details. With the chosen edge lengths, the
density of the control point set $\vL$ induced by
Eq.~\eqref{eq:FR-rule} is $\dens (\vL) = (3+\lambda)/13 \approx
0.408$. 

\begin{figure}
\begin{center}
  \includegraphics[width=0.7\textwidth]{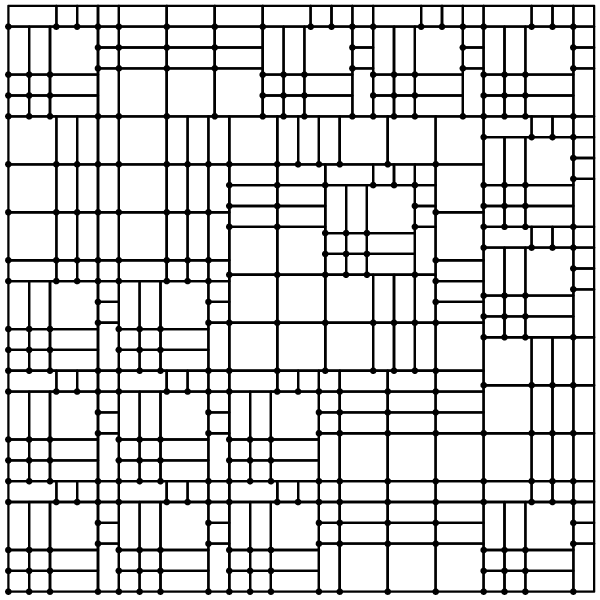}
\end{center}
\caption{A patch of the Frank--Robinson tiling\label{fig:FR} defined
  by the stone inflation rule~{\eqref{eq:FR-rule}}, obtained by three
  inflation steps from a single large square.}
\end{figure}

If we define the hull as the orbit closure of a fixed point tiling
(under the square of the rule~\eqref{eq:FR-rule}) in the local rubber
topology, we get a compact tiling space of infinite local complexity;
see \cite{FR} and \cite[Ex.~5.8]{TAO} for more. As in the FLC case, it
is also true here \cite[Cor.~5.7 and Ex.~6.3]{LS} that the tiling
dynamical system does not have any non-trivial eigenfunction.  In the
diffraction context, this implies that the \textsf{pp} part consists
of the trivial Bragg peak at $k=0$ only; see below for its intensity.

By taking the lower-left corner of each prototile as its control
point, as indicated in Eq.~\eqref{eq:FR-rule} and Figure~\ref{fig:FR},
we turn each tiling of the hull into a Delone set that is MLD with the
tiling. Now, the absence of non-trivial eigenfunctions translates to
the diffraction of this Delone set by asserting that the trivial Bragg
peak at $k=0$ is the only contribution to the pure point part of the
diffraction measure.

The Fourier matrix $B$ is given by
\begin{equation}\label{eq:B-def}
      B (x,y) \, = \, \begin{pmatrix}
      x^2 \pts y^2 & 1 & 1 & 1 \\
      p(x,y) & 0 & r(y) & 0 \\
      p(y,x) & r(x) & 0 & 0 \\
      q(x,y) & 0 & 0 & 0 \end{pmatrix} ,
      \qquad \text{with } (x,y) = \bigl(
      \ee^{2 \pi \ii k_1}, \ee^{2 \pi \ii k_2} \bigr)
\end{equation}
and the (trigonometric) polynomials
\begin{equation}\label{eq:further}
\begin{split}
    r (x) \, &  = \, x^{\lambda} + x^{\lambda + 1} + x^{\lambda + 2} , \\
   p (x,y) \, & = \,  x^2 + x^2 \pts y + y^{\lambda+2}, \\
   q (x,y) \, & = \, 1 + x + y + x \pts y + x^{\lambda} y^{\lambda} 
      \bigl( x^2  + y^2  + x \pts y^2 + x^2 y + x^2 y^2\bigr) .
\end{split}
\end{equation}
Note that $B(0)$ is the inflation matrix of the tiling, with PF
eigenvalue $\lambda^2$.

Now, the cocycle is given by
$B^{(n)} (k) = B(k) B(\lambda k) \cdots B(\lambda^{n-1} k)$, with
maximal Lyapunov exponent
\[
    \chi^{B} (k) \, = \, \limsup_{n\to\infty}
    \myfrac{1}{n} \log \| B^{(n)} (k) \| \pts ,
\]
where the choice of the (sub-multiplicative) matrix norm is
arbitrary. Absence of absolutely continuous diffraction will be
implied if we show that
$\chi^{B} (k) \leqslant \log (\lambda) - \varepsilon$ for some
$\varepsilon > 0$ and a.e.\ $ k \in\RR^2$. Since it is convenient to
work with the square of the Frobenius norm,\footnote{The spectral norm
  gives better bounds, but is harder to calculate. Also, computing
  means is easier with simple trigonometric polynomials, via
  harvesting their quasiperiodicity.} we prefer to compare
$2\pts\pts \chi^{B}$ with $\log (\lambda^2) \approx 1.668$ instead.

By standard subadditive arguments along the lines used for our
previous examples, one finds that, for any $N\in\NN$ and
then a.e.\ $k \in \RR^2$, 
\begin{equation}\label{eq:upper}
   2 \pts\pts \chi^{B} (k) \, \leqslant \,
   \myfrac{1}{N} \, \MM \bigl( \log \| B^{(N)} (.) 
   \|^{2}_{\mathrm{F}} \bigr) \, =: \, \fm^{\pa}_N \pts ,
\end{equation}
where $\MM$ again denotes the mean. Since
$ \log \| B^{(N)} (.)  \|^{2}_{\mathrm{F}}$ is a quasiperiodic
function in two variables, with fundamental frequencies $1$ and
$\lambda$, the mean can be expressed as an integral over the
$4$-torus, $\TT^4$. To this end, one introduces new variables
$u^{\pa}_{1}, u^{\pa}_{2}$ and $v^{\pa}_{1}, v^{\pa}_{2}$ such that
\[
    B (k) \, = \, \tilde{B} (u^{\pa}_{1}, u^{\pa}_{2},
    v^{\pa}_{1}, v^{\pa}_{2}) \big|_{u^{\pa}_{1} = \lambda k^{\pa}_1 \pts ,
    \, u^{\pa}_{2} = k^{\pa}_1 \pts , \, v^{\pa}_{1} = \lambda k^{\pa}_2 \pts , \,
    v^{\pa}_{2} = k^{\pa}_2 }
\]
where $\tilde{B}$ is $1$-periodic in each variable. Here, $\tilde{B}$
is defined in complete analogy to \eqref{eq:B-def}, with the
corresponding modifications on $r$, $p$ and $q$ from
Eq.~\eqref{eq:further}; compare \cite{BFGR} for a related
one-dimensional case analysed previously.  One now finds
\begin{equation}\label{eq:upperbound}
   \fm^{\pa}_{N} \, = \, \myfrac{1}{N} \, 
   \MM \bigl( \log \| \tilde{B}^{(N)} (.) 
   \|^{2}_{\mathrm{F}} \bigr) \, = \,
   \myfrac{1}{N} \int_{\TT^4} \log 
   \| \tilde{B}^{(N)} (u^{\pa}_{1}, u^{\pa}_{2}, v^{\pa}_{2},
    v^{\pa}_{2})   \|^{2}_{\mathrm{F}}
   \dd u^{\pa}_{1} \dd u^{\pa}_{2} \dd v^{\pa}_{1} \dd v^{\pa}_{2} \pts ,
\end{equation}
which can be calculated numerically with good precision.
Figure~\ref{fig:compare} illustrates the result.

\begin{figure}
\begin{center}
  \includegraphics[width=0.83\textwidth]{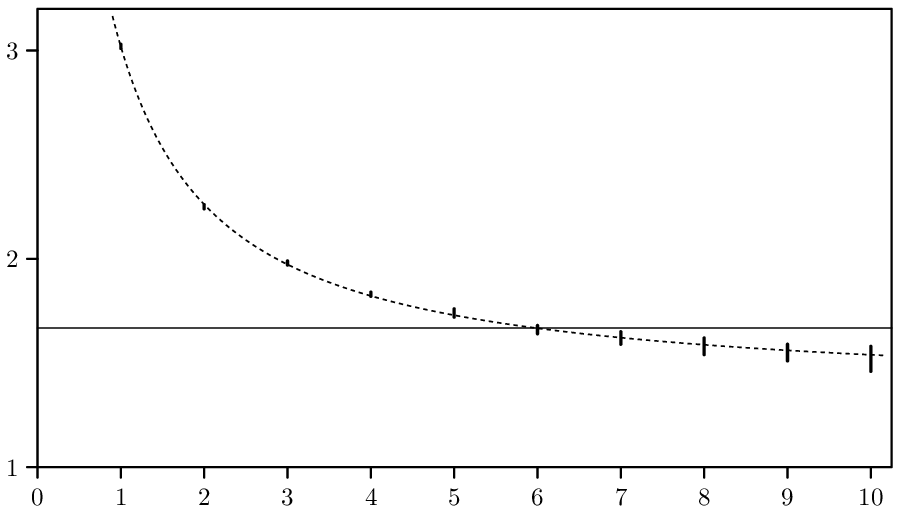}
\end{center}
\caption{Numerical values of the upper bounds $\fm^{\protect \pa}_{N}$
  of $2 \pts \chi^{B}$ from Eq.~\eqref{eq:upperbound}, for
  $1 \leqslant n \leqslant 10$. The horizontal line is at height
  $2 \log (\lambda) \approx 1.668$.\label{fig:compare} The estimated
  numerical errors are indicated by vertical bars (the dotted line is
  for eye guidance only).}
\end{figure}

Let us summarise this section as follows.  

\begin{theorem}
  Let $\vL$ be the set of control points of any element of the
  Frank--Robinson tiling hull. Then, the diffraction measure of the
  corresponding Dirac comb, $\delta^{\pa}_{\! \vL}$, is of the form\/
  $\widehat{\gamma} = \dens (\vL)^2 \, \delta^{\pa}_{0} +
  \widehat{\gamma}^{\pa}_{\mathsf{sc}}$,
  where the singular continuous part can be expressed in terms of a
  generalised Riesz product.
  
  More generally, if one assigns general complex weights\/
  $u^{\pa}_{1}, \ldots, u^{\pa}_{4}$ to the four types of control points,
  not all zero, the corresponding diffraction measure is still
  singular, where the only point measure is the central peak at\/ $0$
  with intensity
\[
  I_0 \, = \, \dens (\vL)^2 \, \big| \nu^{\pa}_{1} u^{\pa}_{1} + 
   \ldots + \nu^{\pa}_{4} u^{\pa}_{4} \big|^2 ,
\]
where\/ $\dens (\vL) = (3+\lambda)/13$ and the\/ $\nu^{\pa}_{i}$ are the
prototile frequencies from Eq.~\eqref{eq:FRfreq}. \qed
\end{theorem}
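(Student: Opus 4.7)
My plan is to assemble the three assertions by combining the Lyapunov-exponent machinery of Section~\ref{sec:general} with the dynamical information already in the literature. First, I would decompose the diffraction measure as $\widehat{\gamma} = \widehat{\gamma}^{\pa}_{\mathsf{pp}} + \widehat{\gamma}^{\pa}_{\mathsf{sc}} + \widehat{\gamma}^{\pa}_{\mathsf{ac}}$ and, via the higher-dimensional analogue of Lemma~\ref{lem:separate} applied to the measure vector $\vU$ derived from the stone inflation \eqref{eq:FR-rule}, arrange for each component to satisfy the renormalisation equation \eqref{eq:renogen} separately. The diffraction $\widehat{\gamma}$ of $\delta^{\pa}_{\! \vL}$ appears as a fixed, explicit linear combination of the components of $\widehat{\vU}$, so it suffices to analyse each spectral type on the vector level.

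For the absence of \textsf{ac} diffraction, I would invoke Theorem~\ref{thm:higher} together with the remark that the FLC hypothesis is not needed. Two points need verification. First, that $\det\bigl(B(k)\bigr) \not\equiv 0$, which follows by evaluating \eqref{eq:B-def} at a single convenient $k$ and observing that the determinant is a non-trivial quasiperiodic polynomial, hence the zero set has Lebesgue measure zero. Second, that $2\pts\chi^{B}(k) \leqslant \log(\lambda^2) - \varepsilon$ a.e.\ for some $\varepsilon>0$: this comes from the subadditive chain~\eqref{eq:upper}, where the existence of the mean $\MM \bigl( \log \| \tilde{B}^{(N)} \|^{2}_{\mathrm{F}} \bigr)$ on $\TT^4$ is secured by a multi-variable adaptation of Lemma~\ref{lem:gen-Sobol}, with the non-singular linear form of Fact~\ref{fact:null-lift} replaced by the natural lift $(u^{\pa}_1, u^{\pa}_2, v^{\pa}_1, v^{\pa}_2) \mapsto (\lambda k^{\pa}_1, k^{\pa}_1, \lambda k^{\pa}_2, k^{\pa}_2)$ along which the Birkhoff averages are evaluated.

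For the identification of the pure point part, I would use the result of \cite{LS} (and \cite[Cor.~5.7]{LS}) that the Frank--Robinson tiling dynamical system has no non-trivial eigenfunction. Transferring via the MLD equivalence between tilings and coloured control point sets, this forces $\widehat{\gamma}^{\pa}_{\mathsf{pp}} = I_0 \pts \delta^{\pa}_{0}$ for some $I_0 \geqslant 0$. The intensity $I_0 = \dens(\vL)^2$ then follows from the standard formula $I_0 = \lim_{R\to\infty} \bigl| \delta^{\pa}_{\! \vL \cap B_R}(\RR^2)/\vol(B_R) \bigr|^2$, whose existence is guaranteed by unique ergodicity of the hull. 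The Riesz-product representation of $\widehat{\gamma}^{\pa}_{\mathsf{sc}}$ emerges by iterating the Fourier-transformed renormalisation (as in the passage leading to Eq.~\eqref{eq:Poisson} and Proposition~\ref{prop:M-Riesz}) and then restricting the resulting infinite product of cocycle factors $B^{(n)}(k)$ to the continuous supporting set, interpreted as a vague limit of translation-bounded measures. For the weighted version, the same cocycle $B(k)$ governs everything: replacing $\delta^{\pa}_{\! \vL}$ by $\omega = \sum_{i=1}^{4} u_i\pts \delta^{\pa}_{\vL_i}$ only modifies the linear functional selecting a component of $\widehat{\vU}$, so absence of \textsf{ac} and singularity of the non-trivial part persist, while a standard ergodic-theorem computation of the zero-frequency coefficient gives $I_0 = \dens(\vL)^2 \, |\nu^{\pa}_1 u^{\pa}_1 + \cdots + \nu^{\pa}_4 u^{\pa}_4|^2$ with the frequencies from \eqref{eq:FRfreq}.

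The main obstacle is the rigorous version of the Lyapunov bound: the numerics in Figure~\ref{fig:compare} display the inequality $\fm^{\pa}_N < 2\log(\lambda)$ convincingly but only approximately, so I would need either a verified numerical enclosure of the four-dimensional Mahler-type integral \eqref{eq:upperbound} for some specific $N$, or, preferably, a structural argument exploiting the sparse block form of $B$ (note the zero columns in \eqref{eq:B-def}) to extract an invariant subspace whose Lyapunov exponent can be computed in closed form via Jensen's formula applied to the polynomial data \eqref{eq:further}. The second route is more satisfactory but requires a careful analysis of how the rows of $B$ propagate under the cocycle, and this is the step I expect to demand the most work.
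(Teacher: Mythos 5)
Your proposal follows essentially the same route as the paper: the trivial pure point part via the no-eigenfunction result of \cite{LS} transferred through the MLD control-point sets, absence of \textsf{ac} diffraction via the subadditive Lyapunov bound \eqref{eq:upper} with the mean evaluated as an integral over $\TT^4$ for the lifted quasiperiodic cocycle, and the singular continuous remainder with its generalised Riesz-product representation together with the weighted intensity formula from the frequencies \eqref{eq:FRfreq}. The ``main obstacle'' you single out is resolved in the paper exactly along your first option, namely a numerical evaluation of the bounds $\fm^{\pa}_{N}$ from \eqref{eq:upperbound} with error estimates (Figure~\ref{fig:compare}), rather than a closed-form structural argument.
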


The natural next step consists in defining the (integrated)
\emph{distribution function} for $\widehat{\gamma}^{\pa}_{\mathsf{sc}}$
in the positive quadrant, as
\[
     F (k^{\pa}_{1}, k^{\pa}_{2}) \, = \,
      \widehat{\gamma}^{\pa}_{\mathsf{sc}}
     \bigl( [0,k^{\pa}_{1}]\! \times\! [0,k^{\pa}_{2}] \bigr),
\]
with the matching extension to the other quadrants. This leads to a
continuous function (which requires an extra argument along the
directions of $e^{\pa}_{1}$ and $e^{\pa}_{2}$; compare
\cite[Sec.~5]{squiral} for a similar analysis) which behaves as $F
(k^{\pa}_{1}, k^{\pa}_{2}) \sim \gamma(\{0\})\, k^{\pa}_{1}
k^{\pa}_{2}$ for large values of $k^{\pa}_{1}$ and $k^{\pa}_{2}$.  As
such, it does not reveal the interesting structure of the \textsf{sc}
measure. A better understanding of the latter requires a multi-fractal
analysis, which is outside the scope of this survey.

At this point, it is suggestive to assume that also the dynamical
spectrum is singular, in particular in the light of \cite{BLvE}, but
we have no complete answer to this question at present.

\section{Closing remarks}\label{sec:outlook}

As we have illustrated by various examples, Lyapunov exponents lead to
useful insight on the spectral nature of inflation tilings in any
dimension. They are a powerful tool to exclude absolutely continuous
spectral components. So far, our approach is taylored to inflation
tiling spaces with finitely many prototiles up to translations, and
thus gives no new insight to pinwheel-type systems. Nevertheless, the
latter also have a strong renormalisation structure, and further
progress seems possible.

To explore the absence of \textsf{ac} diffraction and spectral
measures in more generality, one would need a more analytic (rather
than numerical) approach to the estimates for upper bounds, or,
ideally, exact expressions of F\"{u}rstenberg type for the
exponents. Also, it would help to establish the almost sure existence
of Lyapunov exponents as limits, which does not seem to be an easy
task outside the constant-length or the Pisot case.

In this exposition, we have mainly considered the absolutely
continuous part of the spectrum. It is not difficult to analyse the
pure point part as well, where some results are discussed in
\cite{BG15,BGM}. Considerably more involved seems the singular
continuous part, which originates from the different scalings one
encounters. Various results on the spectral measures in one dimension
are derived in \cite{BuSol,BuSol2} via matrix Riesz products, which
are not restricted to the self-similar case.  It would be interesting
to establish a connection with the topological constraints on size and
shape changes \cite{CS,CS2}, which should at least be possible in the
irreducible Pisot case.

\section*{Acknowledgements}

It is our pleasure to thank Alan Bartlett, Michael Coons, Natalie
Frank, Franz G\"{a}hler, Alan Haynes, Neil Ma\~{n}ibo, Robbie
Robinson, Dan Rust, Lorenzo Sadun and Boris Solomyak for discussions
and helpful comments on the manuscript.  Various useful hints from an
anonymous reviewer are gratefully acknowledged. This work was
supported by the German Research Foundation (DFG), within the CRC~1283
at Bielefeld University, and by EPSRC through grant EP/S010335/1.

\end{document}